%Version of April 4th, 2022
%Version of 29 March 2022
\documentclass[12pt, reqno,draft]{amsart}

\usepackage{amsmath}
\usepackage{amsthm}
\usepackage{amssymb}
\usepackage[abbrev]{amsrefs}
\usepackage{mathrsfs}
\usepackage[usenames]{color}
\usepackage{yhmath}
\usepackage{bm}
\usepackage{bbm}
\usepackage{enumitem}

% THEOREM Environments-----------------------------------------
%
\newtheorem{thm}{}[section]
\newtheorem{theorem}[thm]{Theorem}
\newtheorem{corollary}[thm]{Corollary}
\newtheorem{lemma}[thm]{Lemma}
\newtheorem{proposition}[thm]{Proposition}

\theoremstyle{definition}

\theoremstyle{remark}

\newtheorem{remark}[thm]{Remark}

\numberwithin{equation}{section}
\allowdisplaybreaks

\newcommand{\abs}[1]{\left\lvert#1\right\rvert}
\newcommand{\aabs}[1]{\lvert#1\rvert}
\newcommand{\norm}[1]{\left\lVert#1\right\rVert}
\newcommand{\Cu}{\ensuremath{\mathcal{Q}}}

\newcommand{\leb}{\ensuremath{\bm{L}}}
\newcommand{\sldf}{\ensuremath{\bm{\varphi_l^s}}}
\newcommand{\sudf}{\ensuremath{\bm{\varphi_u^s}}}
\newcommand{\ldf}{\ensuremath{\bm{\varphi_l}}}
\newcommand{\udf}{\ensuremath{\bm{\varphi_u}}}
\newcommand{\kl}{\ensuremath{\tilde{\bm{k}}}}
\newcommand{\kk}{\ensuremath{\bm{k}}}
\newcommand{\XB}{\ensuremath{\mathcal{X}}}
\newcommand{\NN}{\ensuremath{\mathbb{N}}}
\newcommand{\uu}{\ensuremath{\bm{u}}}
\newcommand{\xx}{\ensuremath{\bm{x}}}
\newcommand{\UU}{\ensuremath{\mathbb{U}}}
\newcommand{\UB}{\ensuremath{\mathcal{U}}}
\newcommand{\VB}{\ensuremath{\mathcal{V}}}
\newcommand{\GG}{\ensuremath{\mathcal{G}}}
\newcommand{\yy}{\ensuremath{\bm{y}}}
\newcommand{\ww}{\ensuremath{\bm{w}}}
\newcommand{\vv}{\ensuremath{\bm{v}}}
\newcommand{\ee}{\ensuremath{\bm{e}}}
\newcommand{\Ind}{\ensuremath{\mathbbm{1}}}
\newcommand{\EE}{\ensuremath{\mathbb{E}}}
\newcommand{\RR}{\ensuremath{\mathbb{R}}}
\newcommand{\FF}{\ensuremath{\mathbb{F}}}

\newcommand{\Id}{\ensuremath{\mathrm{Id}}}
\newcommand{\XX}{\ensuremath{\mathbb{X}}}
\newcommand{\Fou}{\ensuremath{\mathcal{F}}}
\newcommand{\YY}{\ensuremath{\mathbb{Y}}}
\newcommand{\YB}{\ensuremath{\mathcal{Y}}}
\newcommand{\EB}{\ensuremath{\mathcal{E}}}

\newcommand{\BB}{\ensuremath{\mathcal{B}}}
\newcommand{\Sym}{\ensuremath{\mathbb{S}}}
\newcommand{\zz}{\ensuremath{\bm{z}}}
\newcommand{\hh}{\ensuremath{\bm{h}}}

\DeclareMathOperator{\supp}{supp}

\DeclareMathOperator{\sgn}{sign}
\DeclareMathOperator*{\Ave}{Ave}

\AtBeginDocument{% THIS CAUSES THAT MR NUMBERS ARE OMITTED IN THE REFERENCES
\def\MR#1{}
}

% ------------------------------------------------------------------------
\begin{document}
% ------------------------------------------------------------------------

\title[]{Sparse approximation using new greedy-like bases in superreflexive spaces}

\subjclass[2010]{41A65, 41A46, 41A17, 46B15, 46B45.}

\keywords{thresholding greedy algorithm, unconditionality constants, bidemocratic bases, superreflexive Banach spaces}

\thanks{F. Albiac acknowledges the support of the Spanish Ministry for Science and Innovation under Grant PID2019-107701GB-I00 for \emph{Operators, lattices, and structure of Banach spaces}. F. Albiac and J.~L. Ansorena acknowledge the support of the Spanish Ministry for Science, Innovation, and Universities under Grant PGC2018-095366-B-I00 for \emph{An\'alisis Vectorial, Multilineal y Aproximaci\'on.} M. Berasategui was supported by ANPCyT PICT-2018-04104.}

\author[Albiac]{Fernando Albiac}
\address{Department of Mathematics, Statistics, and Computer Sciencies--InaMat2\\
Universidad P\'ublica de Navarra\\
Campus de Arrosad\'{i}a\\
Pamplona\\
31006 Spain}
\email{fernando.albiac@unavarra.es}

\author[Ansorena]{Jos\'e L. Ansorena}
\address{Department of Mathematics and Computer Sciences\\
Universidad de La Rioja\\
Logro\~no\\
26004 Spain}
\email{joseluis.ansorena@unirioja.es}

\author[Berasategui]{Miguel Berasategui}
\address{Miguel Berasategui\\
IMAS - UBA - CONICET - Pab I, Facultad de Ciencias Exactas y Naturales\\
Universidad de Buenos Aires\\
(1428), Buenos Aires, Argentina}
\email{mberasategui@dm.uba.ar}

\begin{abstract}
This paper is devoted to theoretical aspects on optimality of  sparse approximation. We undertake a quantitative study of   new types of greedy-like bases that have recently arisen in the context of nonlinear $m$-term approximation in Banach spaces  as a generalization of the properties that characterize almost greedy bases, i.e., quasi-greediness and democracy.  As a means to compare the efficiency of these new bases with already existing ones in regards to the implementation of the Thresholding Greedy Algorithm, we place  emphasis on obtaining  estimates  for their sequence of unconditionality parameters. Using an enhanced version of the original Dilworth-Kalton-Kutzarova method from \cite{DKK2003} for building almost greedy bases, we manage to construct bidemocratic bases whose unconditionality parameters satisfy significantly worse estimates than almost greedy bases even in Hilbert spaces. 
\end{abstract}

\maketitle

\section{Introduction}\noindent 
%\Miguel{Agregué algunas sugerencias en verde.}
The recent developments in the study of the efficiency of the Thresholding Greedy Algorithm (TGA for short) have given rise to new types of greedy like bases which are of interest both from the abstract point of view of functional analysis and also from the more applied nature of the problem of obtaining optimal numerical computations associated to sparse approximation by means of nonlinear algorithms. 

 The TGA simply takes $m$ terms with the maximum absolute values of the coefficients from the expansion of a signal (a function) relative to a fixed representation system (a basis). Different greedy algorithms originate from different ways of choosing the coefficients of the linear combination in the $m$-term approximation to the signal. Another name, commonly used in the literature for $m$-term approximation is \emph{sparse approximation}. Sparse approximation of functions is a powerful analytic tool which is  present in many important applications to image and signal processing, numerical computation, or compressed sensing.
  
 It is fair to say that greedy approximation theory evolved  from the study of the three main types of greedy-like bases, namely, greedy, quasi-greedy, and almost greedy bases. Greedy bases are the best for application of the TGA for sparse approximation, since for any function
% signal (or function) 
  $f$ in a given space $\XX$, after  $m$ iterations it provides approximations with an error of the same order as the best $m$-term theoretical approximation to $f$. On the other hand, that a basis is quasi-greedy merely guarantees that, for any $f\in \XX$, the TGA provides approximants that converge to $f$ but does not guarantee the optimal rate of convergence.

 %if a basis is quasi-greedy then it merely guarantees that for any $f\in \XX$, the TGA provides approximants that converge to $f$, but does not guarantee the optimal rate of convergence.
 
 Both greedy and quasi-greedy bases were introduced in the pioneering work of Konyagin and Temlyakov \cite{KoTe1999} from 1999, whereas almost greedy bases were defined shortly afterwards by Dilworth et al.\ \cite{DKKT2003} in what with hindsight would be, together with the work of Wojtaszczyk \cite{Woj2000}, the forerunner article on the functional analytic approach to the theory. If Konyagin and Temlyakov had characterized greedy bases as unconditional bases with the additional property of being democratic, Dilworth et al.\ characterized almost greedy bases as those bases that are simultaneously  quasi-greedy and democratic.
 
 In studying the optimality of the TGA, other  bases have emerged which, despite being more general  than quasi-greedy bases, still preserve essential properties in greedy approximation. Delving deeper into these properties is of interest both from a theoretical and a practical viewpoint. On one hand, isolating the specific features of those bases makes the theory progress; on the other hand, from a more applied approach, working with these properties leads to obtaining sharper estimates for the constants that measure
 %allows us to obtain sharper estimates for 
 the efficiency of the TGA (see \cite{AAB2021}).

In this paper we concentrate on squeeze-symmetric bases and truncation quasi-greedy bases, in a sense that will be made explicit below, with an eye to the quantitative aspects of the theory.
%In this paper we concentrate on  quantitative aspects of the theory involving truncation quasi-greedy  bases and squeeze-symmetrric bases, in a sense that will be made explicit below. }
 %In particular 
The central question we ask ourselves is whether these bases retain certain relevant numerical features of almost greedy and quasi-greedy bases or not. Answering this question would help us to better acknowledge their role in sparse approximation theory.

From the point of view of sparse approximation in Banach spaces with respect to the TGA,  the most important  numerical information  of a basis $\XB=(\xx_n)_{n=1}^\infty$ is obtained through the sequence $(\leb_m)_{m=1}^\infty$ of its Lebesgue parameters, which, roughly speaking,  measures how far $\XB$ is from being greedy. The growth of these parameters is linearly determined by the combination of the unconditionality parameters $(\kk_m)_{m=1}^\infty$, which quantify the conditionality of $\XB$,  and the squeeze symmetric parameters, which quantify a symmetry property related to democracy (see \cite{AAB2021}*{Theorem 1.5}). In the case when the basis $\XB$ is squeeze-symmetric we have, 
\[
\leb_{m} \approx \kk_{m}, \quad m\in\NN,
\]
%In order to quantify the conditionality of a basis $\XB=(\xx_n)_{n=1}^\infty$ in a Banach space $\XX$, it is customary to study the growth of its \emph{conditionality parameters},
%\[
%\kk_m=\kk_m[\XB,\XX] :=\sup_{|A|\le m} \Vert S_A[\XB,\XX]\Vert, \quad m\in\NN, 
%\]
%where for finite $A\subseteq \NN$, we let $S_A=S_A[\XB,\XX]\colon \XX \to\XX$ denote the coordinate projection onto $[\xx_{n}\colon n\in A]$ , i.e.,
%\[
%S_A(f)=\sum_{n\in A} \xx_n^*(f)\, \xx_n,\quad f\in \XX.
%\]
hence for this type of bases the growth of the Lebesgue parameters  is completly controlled  by the growth  of the unconditionality parameters. 

The best one can say about the asymptotic estimates for the unconditionality parameters of 
%general 
(semi-normalized)
quasi-greedy bases in Banach spaces is that
\begin{equation}\label{eq:GE}
\kk_{m} \lesssim 1+\log m, \quad m\in\NN,
\end{equation}
 (see \cite{DKK2003}*{Lemma 8.2}). 
  %Later on, it was noticed that certain smoothness conditions in the underlying Banach space may contribute to improve this estimate. 
 %For instance, 
 %In fact, quasi-greedy bases in super-reflexive Banach spaces verify instead 
%\begin{equation}\label{AAGHRestimate}
%\kk_{m}[\XB,\XX] \lesssim (\log m)^{1-\epsilon}, \quad m\ge 2.
%\end{equation}
%for some $0<\epsilon<1$ depending on $\XB$ and $\XX$ (\cite{AAGHR2015}*{Theorem 1.1}).
In turn, an asymptotic upper bound for the unconditionality parameters of truncation quasi-greedy bases in Banach spaces was estimated in \cite{AAW2021b}*{Theorem 5.1}, where it was proved that if $\XB$ is a (semi-normalized) truncation quasi-greedy basis of a Banach space $\XX$ then \eqref{eq:GE} still holds.
%in general,
%\[
%\kk_m[\XB,\XX]\lesssim \log m, \quad m\ge 2.
%\]

Based on this, one might feel tempted to conjecture that, in spite of the fact that truncation quasi-greedy bases are a weaker form of quasi-greediness, the efficiency of the greedy algorithm for the former kind of bases is the same as the efficiency we would get for the latter. There are recent results of a more qualitative nature that substantiate this guess, such as \cite{AABW2021}*{Theorem 9.14}, \cite{AAW2021}*{Theorem 4.3}, \cite{AABW2021}*{Proposition 10.17(iii)}, \cite{AABBL2022}*{Corollary 4.5} and \cite{AABBL2021}*{Corollary 2.6},
%\cite{AABBL2021}*{Theorem 2.5}, 
all of which are generalizations to truncation quasi-greedy bases of results previously obtained for quasi-greedy bases. These results   improve \cite{AlbiacAnsorena2016}*{Theorem 3.1}, \cite{DSBT2012}*{Theorem 4.2},  \cite{DKKT2003}*{Proposition 4.4}, \cite{DKK2003}*{Corollary 8.6} and \cite{DKKT2003}*{Theorem 5.4}, respectively.

It is therefore crucial to determine whether truncation quasi-greedy  bases provide the same accuracy  in $m$-term greedy approximation  as quasi-greedy bases,  in  general Banach spaces or under certain smoothness conditions of the space.  Imposing superreflexivity to the underlying Banach space is indeed a very natural restriction that leads to an improvement of the performance of the TGA. For instance, it was shown in  \cite{AAGHR2015}*{Theorem 1.1} that the unconditionality parameters of  quasi-greedy bases  in these spaces  satisfy the sharper estimate 
\begin{equation}\label{AAGHRestimate}
\kk_{m}\lesssim (1+\log m)^{1-\epsilon}, \quad m\in\NN,
\end{equation}
for some $0<\epsilon<1$ depending on the basis and the space.

 In this paper  we disprove the guess that the estimate \eqref{AAGHRestimate} should pass to truncation quasi-greedy bases of super-reflexive Banach spaces (see \cite{AABBL2022}*{Remark 3.9})
 by building  squeeze-symmetric bases with ``large'' unconditionality parameters even inside  Hilbert spaces.
 %superreflexive spaces.
To the best of our knowledge, this provides the first evidence of a different behavior between the implementation of TGA for quasi-greedy bases and truncation quasi-greedy bases. In fact, the bases we construct  belong to the more demanding class of bidemocratic bases! Thus, our results connect with and give more relevance to the first known examples of bidemocratic bases which are not quasi-greedy (see \cite{AABBL2021}). 
 
 %The method we use in our construction is  of interest in the theory by itself since it permits to extend the validity of the Dilworth-Kalton-Kutzarova method from \cite{DKK2003} to less restrictive  bases than semi-normalized Schauder bases (see Section~\ref{DKKSect}). 
 
 The method we use in our construction is  of interest in the theory by itself since it permits to extend the validity of the Dilworth-Kalton-Kutzarova method (DKK method for short)  to a less restritive class of  bases than the ones considered in \cite{DKK2003}
and \cite{AADK2018}.  The DKK method was invented in \cite{DKK2003} with the  purpose of constructing almost greedy bases in separable Banach spaces which contain a complemented symmetric basic sequence. 
%Later on, the authors of \cite{AADK2018} used the method to construct almost greedy bases whose unconditionality parameters grow faster than a prescribed sequence. 
For the original  DKK method to work, the main ingredients are a semi-normalized Schauder basis $\XB$ of a Banach space $\XX$ and a subsymmetric sequence space, from which we obtain a sequence space $\YY$ whose unit vector system is a Schauder basis  fulfilling some special features. Our contribution here consists in  being able to implement the DKK method with  `bases' $\XB$ which, one  one hand, are not necessarily Schauder bases and, on the other hand, need not be semi-normalized. We then study how this extension influences the properties of the resulting basis of the space $\YY$, with the intention to investigate its performance relative to the TGA.
% in superreflexive Banach spaces.

%\section{The DKK method for more general bases}\noindent
%The Dilworth-Kalton-Kutzarova (DKK for short) method for building almost greedy bases was invented in \cite{DKK2003} and widely developed in \cite{AADK2018}.   We begin by introducing the necessary notions to address this task. As we use standard terminology in greedy approximation, we refer the reader to  \cite{AABW2021} for other definitions.

\section{Background and terminology}

\noindent Throughout this paper we will use standard notation and terminology from Banach spaces and greedy approximation theory, as can be found, e.g., in  \cite{AlbiacKalton2016}. We also refer the reader to the recent article \cite{AABW2021} for other more especialized notation. We next single out however the most heavily used terminology. 
 
Let $\XX$ be an infinite-dimensional separable Banach space (or, more generally, a quasi-Banach space) over the real or complex field $\FF$.  We will denote by $\langle B\rangle$ the linear span of a subset $B$ of $\XX$. In turn, $[B]$ denotes the closed linear span of  $B$. Throughout this paper by a \emph{basis} of $\XX$ we mean a  sequence $\XB=(\xx_n)_{n=1}^\infty$ that generates the entire space, in the sense that $[\XB]=\XX$, and for which there is a (unique)  sequence $\XB^*=(\xx_{n}^*)_{n=1}^\infty$ in the dual space $\XX^{\ast}$ such that $\xx_n^*(\xx_k)=\delta_{k,n}$ for all $k$, $n\in\NN$. We will refer to the basic sequence $\XB^*$ in $\XX^*$ as to the \emph{dual basis} of $\XB$.
If the linear span of $\XB^*$ is $w^*$-dense is $\XX^*$, i.e., if  the \emph{coefficient transform}, given by
\begin{equation*}
\Fou=\Fou[\XB,\XX]\colon\XX\to\FF^\NN, \quad f\mapsto (\xx_n^*(f))_{n=1}^\infty,
\end{equation*}
is one-to-one, we say that the basis $\XB$ is \emph{total}.

%Given two linearly independent sequences $\XB=(\xx_n)_{n=1}^\infty$ and $\YB=(\yy_n)_{n=1}^\infty$ in vector spaces $X$ and $Y$ respectively we denote by
%\[
%L[\XB,\YB]\colon \langle \XB\rangle \to \langle \YB\rangle
%\]
%the linear bijection defined by $L[\XB,\YB](\xx_n)=\yy_n$ for all $n\in\NN$. 
The \emph{support} of $f\in\XX$ with respect to $\XB$ is the set
\[
\supp(f)=\{ n\in\NN \colon \xx_n^*(f) \not=0\}.
\]

Let $\EB=(\ee_n)_{n=1}^\infty$ be the unit vector system of $\FF^\NN$, and let $\EB^*=(\ee_n^*)_{n=1}^\infty$ be the unit functionals defined for $f=(a_k)_{k=1}^\infty\in\FF^\NN$ and $n\in\NN$  by $\ee_n^*(f)=a_n$. A \emph{sequence space} will be a quasi-Banach space $\YY$ such that $c_{00}\subseteq \YY\subseteq \FF^\NN$ such that $c_{00}$ is a dense subset of $\YY$. Note that, if $\YY$ is a sequence space, $\EB$ is a total basis of $\YY$ whose dual basis is $(\ee_n^*|_\YY)_{n=1}^\infty$. Conversely, given a quasi-Banach space $\YY$ in which  $c_{00}$ is a dense subset, the sequence $\EB$ is a basis if and only if $\ee_n^*|_{c_{00}}$ is bounded for all $n\in\NN$; and $\EB$ is a total basis if and only if there is a one-to-one continuous extension $T\colon\YY\to\FF^\NN$ of the identity map on $c_{00}$. Here, we consider $\FF^\NN$ endowed with the pointwise convergence topology. If $\YY$ is a sequence space we will identify its dual space $\YY^*$ with the sequence space consisting of all $g\in\FF^\NN$ such  $ \langle \cdot , g \rangle $ restricts to a functional of $\YY$, where   $ \langle \cdot , \cdot \rangle $ is the canonical dual pairing defined for $f=(a_n)_{n=1}^\infty\in c_{00}$ and $g\in(b_n)_{n=1}^\infty\in\FF^\NN$ by
\[
\left\langle f, g\right\rangle=\sum_{n=1}^\infty a_n b_n.
\]
%A basis if a Banach (or, more generally, quasi-Banach space) $\XX$ or the real or complex field $\FF$ will be a sequence $\XB=(\xx_n)_{n=1}^\infty$ which spans the whole space $\XX$, and for which there is a (unique) sequence $\XB^*=(\xx_n^*)_{n=1}^\infty$ of biorthogonal functionals. Let $\EE$ be the set consisting of all numbers in the real or complex field $\FF$ whose absolute value is one. 
%A basis $\XB$ of a quasi-Banach space $\XX$ said to be \emph{greedy} if 
%\begin{equation}\label{eq:GrCond}
%\norm{ f-S_A(f)} \le C \norm{f-g}
%\end{equation}
%for all $f\in\XX$, all greedy sets $A$ of $f$, and all linear combinations $g$ of almost $m=\abs{A}$ vectors of the basis.
 %If \eqref{eq:GrCond} holds under the extra assumption that $g$ is a projection of $f$, i.e., $g=S_B(f)$ for some $B\subseteq\NN$ with $\abs{B}\le m$, we say $\XB$ is \emph{almost greedy}. If we only  imposse \eqref{eq:GrCond} in the case that $g=0$, we call the basis \emph{quasi-greedy.}
 %Since the basis $\XB$ is \emph{unconditional} if and only if there is a constant $C$ such that 
% \begin{equation}\label{eq:unc}
% \norm{S_A(f)} \le C \norm{f}
% \end{equation}
%for all $f\in\XX$ and all $A\subseteq\NN$ finite, and quasi-greedy bases are those bases for which \eqref{eq:unc} holds under the extra assumption that $A$ is a greedy set of $f$, any unconditional basis is quasi-greedy. 

Let $\EE$ denote the set  of all scalars of modulus one. Given $A\subseteq\NN$ finite and $\varepsilon=(\varepsilon_n)_{n\in A}\in\EE^A$ we put
\[
\Ind_{\varepsilon,A}[\XB,\XX]=\sum_{n\in A} \varepsilon_n\, \xx_n.
\] 
If $\varepsilon_n=1$ for all $n\in A$, we set $\Ind_{A}[\XB,\XX]=\Ind_{\varepsilon,A}[\XB,\XX]$. 
 
 A basis $\XB$ of a quasi-Banach space $\XX$  is said to be \emph{democratic} (resp., \emph{super-democratic}) if there is a constant $C\ge 1$ such that 
 \begin{equation*}%\label{eq:dem}
 \norm{ \Ind_{A}[\XB,\XX]} \le C \norm{ \Ind_{B}[\XB,\XX]}  \; (\mbox{resp., } \; \norm{ \Ind_{\varepsilon,A}[\XB,\XX]} \le C \norm{ \Ind_{\delta,B}[\XB,\XX]} )
 \end{equation*}
 for all finite subsets $A$ and $B$ of $\NN$ with $\abs{A} \le \abs{B}$, all $\varepsilon\in\EE^A$, and all $\delta\in\EE^B$. If the above inequality holds for a given $C$, we say that $\XB$ is $C$-democratic (resp., $C$-super-democratic).
 %there is a sequence $(\Lambda_m)_{m=1}^\infty$ in $(0,\infty)$ such that 
%\[
%\textstyle
%\norm{\Ind_{A}[\XB,\XX]}\approx \Lambda_{\abs{A}}\;  \mbox{(resp.,} \;
%\norm{\Ind_{\varepsilon,A}[\XB,\XX]}\approx \Lambda_{\abs{A}}\mbox{)}
%\]
%for all $A\subseteq\NN$ finite and all $\varepsilon\in\EE^A$.  

To measure the democracy of a basis $\XB$ of a quasi-Banach space $\XX$, we introduce the \emph{upper and lower democracy functions} of the basis, defined for $m\in\NN$ by
\begin{align*}
\udf[\XB,\XX](m)&=\sup\left\{ \norm{\Ind_{A}[\XB,\XX]} \colon \abs{A} \le m\right\} \mbox{ and}\\
\ldf[\XB,\XX](m)&=\inf\left\{ \norm{\Ind_{A}[\XB,\XX]} \colon \abs{A} \ge m\right\},
\end{align*}
respectively. The basis $\XB$ is democratic if and only if 
\[
\udf[\XB,\XX]\le C \ldf[\XB,\XX].
\] 
Similarly, the basis is $C$-super-democratic if and only if 
\[
\sudf[\XB,\XX]\le C\sldf[\XB,\XX],
\]
where  $\sudf[\XB,\XX]$ and $\sldf[\XB,\XX]$ are, respectively, the \emph{upper and lower super-democracy} defined for $m\in\NN$ as
\begin{align*}
\sudf[\XB,\XX](m)&=\sup\left\{ \norm{\Ind_{\varepsilon,A}[\XB,\XX]} \colon \abs{A} \le m, \, \varepsilon\in\EE^A\right\},\\
\sldf[\XB,\XX](m)&=\inf\left\{ \norm{\Ind_{\varepsilon,A}[\XB,\XX]} \colon \abs{A} \ge m, \, \varepsilon\in\EE^A\right\}.
\end{align*}
The upper super-democracy function of a basis,  also called the \emph{fundamental function} of the basis, grows as the upper democracy function. In contrast, the lower super-democracy function of the basis can grow much more slowly than the lower democracy function (see \cite{Woj2014}).

%A basis is greedy if and only if its unconditional and democratic \cites{KoTe1999,AABW2021}, and it is almost greedy if and only if it is quasi-greedy and democratic \cite{DKKT2003,AABW2021}.

A basis $\XB=(\xx_n)_{n=1}^\infty$ is said to be \emph{symmetric} (resp., \emph{subsymmetric}) if there is a constant $C$ such that 
\[
\frac{1}{C} \norm{\sum_{n=1}^\infty a_n \, \xx_n}\le 
 \norm{\sum_{n=1}^\infty \varepsilon_n\, a_n \, \xx_{\pi(n)}} \le C \norm{\sum_{n=1}^\infty a_n \, \xx_n}
\]
for all $(a_n)_{n=1}^\infty\in c_{00}$, all $(\varepsilon_n)_{n=1}^\infty\in\EE^\NN$, and all bijective (resp., increasing) maps  $\pi\colon\NN\to\NN$. 
%\Miguel{Me parece que habría que ajustar esta definición y cambiar uno a uno por estrictamente crecientes, y agregar la implicación en el sentido inverso, para obtener las cotas para $P_{\Sym}$ y $Q_{\Sym}$. Me parece que convendría la definición de \cite{AADK2016},
% \[
%C^{-1} \norm{\sum_{n=1}^\infty a_n \, \xx_n}\le  \norm{\sum_{n=1}^\infty \varepsilon_n\, a_n \, \xx_{\pi(n)}} \le C \norm{\sum_{n=1}^\infty a_n \, \xx_n}
%\],
%con $\pi$ estrictamente creciente en el caso de las subsimétricas.\\  }
If we can choose $C=1$, we say that $\XB$ is $1$-symmetric (resp., $1$-subsymmetric). Any symmetric  (resp., subsymmetric) basis is $1$-symmetric (resp., $1$-subsymmetric) under a suitable renorming of the space (see \cites{Singer1970, Ansorena2018}). Moreover,  $1$-symmetric bases are $1$-subsymmetric. Here, we will deal with \emph{symmetric and subsymmetric sequence spaces}, i.e., sequence spaces whose unit vector system is a $1$-symmetric or $1$-subsymmetric basis. The unit vector system of a subsymmetric sequence space, besides $1$-unconditional, is $1$-super-democratic, i.e., for each $m\in\NN$ there is a constant  $\Lambda_{m}=\Lambda_m[\Sym]\in(0,\infty)$ such that
\begin{center}
$\Lambda_m[\Sym]=\norm{\Ind_{\varepsilon,A}[\EB,\Sym]}$  $\forall\varepsilon\in\EE^A$ and $\forall A\subseteq \NN$ with $\abs{A}=m$.
\end{center}
%\[
%\Lambda_m[\Sym]=\norm{\Ind_{\varepsilon,A}[\EB,\Sym]}, \quad \forall\;\varepsilon\in\EE^A,\;\forall A\subset \NN\; \text{with}\; \abs{A}=m.
%\]
We call $(\Lambda_m[\Sym])_{m=1}^\infty$ the fundamental function of $\Sym$. The sequence $(\Lambda_m[\Sym])_{m=1}^\infty$ is non-decreasing, and, in case that $\Sym$ is a Banach space, so is $(m/\Lambda_m[\Sym])_{m=1}^\infty$ \cite{DKKT2003}. In fact, the closed linear span $\Sym_0^*$ of the unit vector system of $\Sym^*$ is a subsymmetric sequence space with
\begin{equation}\label{eq:SubsBidem}
\Lambda_m[\Sym_0^*] \approx \frac{m}{\Lambda_m[\Sym]}, \quad m\in\NN
\end{equation}
(see \cite{LinTza1977}). 

In the following sections we will use subsymmetric sequence spaces whose fundamental function grows in a controlled manner, and the geometry of the underlying  space plays an important role in order to ensure this steady behaviour. The next two regularity conditions formalize that pattern. We say that a sequence $(\Gamma_m)_{m=1}^\infty$ in $(0,\infty)$  has the \emph{lower regularity property} (LRP for short) if there a positive integer $b$ such 
 \[
 2\Gamma_m \le \Gamma_{bm}, 
\quad
 m\in\NN.
 \]
We say that  $(\Gamma_m)_{m=1}^\infty$  it has the \emph{upper regularity property} (URP for short) if there a positive integer $b$ such 
 \[
 2\Gamma_{bm} \le b \Gamma_{m}, 
\quad
 m\in\NN.
 \]
\begin{proposition}[\cite{DKKT2003}*{Proposition 4.1}]\label{prop:RadLURP}
Let $\Sym$ be a subsymmetric sequence space.
\begin{enumerate}[label={(\roman*)},leftmargin=*,widest=ii]
\item If $\Sym$ has some nontrivial cotype, then $(\Lambda_m[\Sym])_{m=1}^\infty$ has the LRP. 
\item  If $\Sym$ has some nontrivial type, then $(\Lambda_m[\Sym])_{m=1}^\infty$ has the LRP and the URP.
\end{enumerate} 
\end{proposition}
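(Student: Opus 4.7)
The plan is to exploit that the unit vector system $\EB$ of $\Sym$ is simultaneously $1$-unconditional and $1$-subsymmetric, so that after partitioning a set of cardinality $bm$ into $b$ consecutive blocks $A_1,\dots,A_b$ of cardinality $m$, the vectors $x_i=\Ind_{A_i}[\EB,\Sym]$ each have norm $\Lambda_m[\Sym]$, and the Rademacher average $\operatorname{Ave}_{\varepsilon\in\EE^b}\|\sum_i\varepsilon_i x_i\|$ collapses to the single value $\Lambda_{bm}[\Sym]$, since every sign choice produces $\Ind_{\delta,A}[\EB,\Sym]$ for some $\delta\in\EE^A$ with $A=A_1\cup\cdots\cup A_b$. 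With this identification in place, the type and cotype inequalities become power-growth bounds on the ratio $\Lambda_{bm}[\Sym]/\Lambda_m[\Sym]$ which I can tune by choosing $b$ large.

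For (i), if $\Sym$ has cotype $q$ with constant $C_q$, applying the cotype inequality to the $b$ blocks gives
\[
b^{1/q}\Lambda_m[\Sym]=\Bigl(\sum_{i=1}^b\|x_i\|^q\Bigr)^{1/q}\le C_q\operatorname{Ave}_\varepsilon\Bigl\|\sum_{i=1}^b\varepsilon_i x_i\Bigr\|=C_q\Lambda_{bm}[\Sym].
\]
Choosing the least $b$ with $b^{1/q}\ge 2C_q$ yields $2\Lambda_m[\Sym]\le\Lambda_{bm}[\Sym]$, i.e.\ the LRP.

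For (ii), the same block construction together with the type-$p$ inequality (constant $T_p$, with $p>1$) produces
\[
\Lambda_{bm}[\Sym]=\operatorname{Ave}_\varepsilon\Bigl\|\sum_i\varepsilon_i x_i\Bigr\|\le T_p\Bigl(\sum_i\|x_i\|^p\Bigr)^{1/p}=T_p b^{1/p}\Lambda_m[\Sym].
\]
Since $p>1$, the condition $T_p b^{1/p}\le b/2$ holds for every sufficiently large $b$, and fixing such a $b$ delivers the URP. To obtain the LRP in this case I would invoke that a Banach lattice with nontrivial type must have nontrivial cotype: because $\EB$ is $1$-unconditional and $1$-subsymmetric, the absence of any nontrivial cotype would (via Maurey--Pisier in the lattice setting) force $\Lambda_m[\Sym]$ to remain bounded, hence $\Sym$ to be isomorphic to $c_0$, which has only trivial type, a contradiction. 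Once nontrivial cotype is secured, part (i) supplies the LRP.

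The step demanding the most care is the reduction of the Rademacher averages to the exact value $\Lambda_{bm}[\Sym]$, which rests entirely on the combined $1$-unconditionality and $1$-subsymmetry of $\EB$; the remainder is an elementary power-counting choice of $b$. The only non-elementary auxiliary input is the lattice fact that nontrivial type implies nontrivial cotype, needed to bootstrap from URP to LRP in (ii), and this is where I expect the main conceptual obstacle to lie if one wishes to avoid citing Maurey--Pisier and argue directly within the subsymmetric framework.
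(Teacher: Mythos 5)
Your block-averaging argument is correct and is essentially the standard proof of this result (the paper imports it verbatim from DKKT2003, so there is no separate in-paper argument to compare against). The key observation that every sign vector $\varepsilon\in\{-1,1\}^b$ turns $\sum_i\varepsilon_i\Ind_{A_i}$ into a single $\Ind_{\delta,A}$ of norm exactly $\Lambda_{bm}[\Sym]$ (by $1$-super-democracy) is exactly what makes the Rademacher average degenerate, and the power-counting choices of $b$ that follow deliver the LRP in (i) and the URP in (ii) cleanly.

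The one place that needs repair is the parenthetical reasoning in (ii) for obtaining the LRP. You assert that the absence of nontrivial cotype would ``force $\Lambda_m[\Sym]$ to remain bounded, hence $\Sym$ to be isomorphic to $c_0$.'' That implication is false: there are subsymmetric sequence spaces with $\Lambda_m\to\infty$ (so not isomorphic to $c_0$) in which $\ell_\infty^n$'s still sit uniformly and hence which have trivial cotype --- a weak Lorentz (Marcinkiewicz) space with $s_m\approx\log(1+m)$ is a typical example, since $\Lambda_{bm}/\Lambda_m\to1$. The fact you actually need is the general Maurey--Pisier implication that nontrivial type entails nontrivial (finite) cotype, which holds for arbitrary Banach spaces with no lattice hypothesis: if $c_0$ were finitely representable in $\Sym$ then, since $\ell_1^n$ embeds isometrically into $\ell_\infty^{2^n}$, $\ell_1$ would also be finitely representable, contradicting nontrivial type. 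Invoke that directly and then apply part (i). Alternatively, a route entirely internal to this paper's toolkit: nontrivial type of $\Sym$ gives super-reflexivity, hence $\Sym_0^*$ also has nontrivial type; the URP argument you already have then gives URP for $(\Lambda_m[\Sym_0^*])_{m}$, and since $\Lambda_m[\Sym_0^*]\approx m/\Lambda_m[\Sym]$, Lemma~\ref{lem:ULRP} converts that URP into the LRP of $(\Lambda_m[\Sym])_m$.
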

%The geometry of the space $\Sym$ affects its fundamental function. Indeed,  the lattice structure induced on $\Sym$ by its unit vector system yields that $\Sym$ has some nontrivial cotype if and only if it has some nontrivial concavity, and it has some nontrivial cotype if and only if it is superreflexive (see \cite{LinTza1979}). 
From another point of view, the lattice structure induced on $\Sym$ by its unit vector system yields that $\Sym$ has some nontrivial cotype if and only if it has some nontrivial concavity, and it has some nontrivial type if and only if it is superreflexive (see \cite{LinTza1979}).  

%For further reference, we record a regularity result about this sequences that we will need.
For further reference, we record a regularity result 
%about this sequences 
that we will need.
 \begin{lemma}[See \cite{AlbiacAnsorena2016}]\label{lem:ULRP}
Let $(\Gamma_m)_{m=1}^\infty$ be a sequence in $(0,\infty)$ such that $(m/\Gamma_m)_{m=1}^\infty$  is non-decreasing. Then $(\Gamma_m)_{m=1}^\infty$ has the LRP if and only if $(m/\Gamma_m)_{m=1}^\infty$ has the URP. Moreover, if $(\Gamma_m)_{m=1}^\infty$ has the LRP then it satisfies the Dini condition
 \[
 \sum_{n=1}^m \frac{\Gamma_n}{n} \approx \Gamma_m, \quad m\in\NN.
 \]
\end{lemma}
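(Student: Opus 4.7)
The plan is to dissect the statement into two components and address them separately: the equivalence of the two regularity conditions, and then the Dini-type estimate.

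The equivalence between the LRP of $(\Gamma_m)_{m=1}^\infty$ and the URP of $(m/\Gamma_m)_{m=1}^\infty$ is a bookkeeping exercise. Writing $\Delta_m^\ast=m/\Gamma_m$, the inequality $2\Gamma_m\le \Gamma_{bm}$ is, after multiplying by $bm/(\Gamma_m\Gamma_{bm})$, equivalent to $2\Delta_{bm}^\ast\le b\Delta_m^\ast$, with the same positive integer $b$; this is exactly URP for $(m/\Gamma_m)$. Both implications are captured by this single chain of elementary algebraic manipulations.

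For the Dini estimate, the lower bound $\sum_{n=1}^m \Gamma_n/n\ge \Gamma_m$ requires no new input: the hypothesis that $(m/\Gamma_m)_{m=1}^\infty$ is non-decreasing says precisely that $(\Gamma_n/n)_{n=1}^\infty$ is non-increasing, so summing $\Gamma_n/n\ge \Gamma_m/m$ for $n=1,\ldots,m$ yields the claim. The substantive content is the upper bound $\sum_{n=1}^m \Gamma_n/n \lesssim \Gamma_m$, which needs both hypotheses.

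My plan for the upper bound is to partition $\{1,\ldots,m\}$ into the blocks
\[
I_j=\{n\in\NN \colon m/b^{j+1}<n\le m/b^j\}, \quad j=0,1,\ldots,
\]
and to show that $\Gamma_n\le (b/2^{j+1})\Gamma_m$ for every $n\in I_j$. The idea is to evaluate $\Gamma$ at an index above $m$, where monotonicity of $m/\Gamma_m$ supplies a clean upper bound, and then descend via LRP. Concretely, since $b^{j+1}n>m$ for $n\in I_j$, the hypothesis yields $\Gamma_{b^{j+1}n}\le (b^{j+1}n/m)\Gamma_m\le b\Gamma_m$; iterating LRP gives $\Gamma_{b^{j+1}n}\ge 2^{j+1}\Gamma_n$, so $\Gamma_n\le (b/2^{j+1})\Gamma_m$. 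A crude count ($|I_j|\le m(b-1)/b^{j+1}$ and $1/n<b^{j+1}/m$ in $I_j$) shows $\sum_{n\in I_j}1/n\le b$, so the $j$-th block contributes at most $(b^2/2^{j+1})\Gamma_m$, and summing the geometric series in $j$ closes the estimate.

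The main obstacle is overcoming the fact that $(\Gamma_m)$ itself is not assumed monotone, so the naive attempt to infer $\Gamma_n\le C\Gamma_m$ from $n\le m$ fails. The remedy is to bridge $\Gamma_n$ to $\Gamma_m$ through the auxiliary index $b^{j+1}n$, which by construction lies \emph{above} $m$ and therefore falls into the regime where the monotonicity hypothesis on $m/\Gamma_m$ does produce an effective upper bound; LRP then carries this information back down to $n$ with the geometric factor that drives the summability.
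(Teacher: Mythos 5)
The paper does not supply a proof of this lemma; it only cites \cite{AlbiacAnsorena2016}, so there is no in-text argument to compare against. Your self-contained proof is correct. The algebraic identity
\[
2\Gamma_m\le\Gamma_{bm}\quad\Longleftrightarrow\quad 2\,\frac{bm}{\Gamma_{bm}}\le b\,\frac{m}{\Gamma_m}
\]
(obtained by multiplying through by $bm/(\Gamma_m\Gamma_{bm})$) settles the LRP/URP equivalence with the same $b$ in both directions. The lower Dini bound is immediate from $\Gamma_n/n$ being non-increasing, as you say. For the upper bound, the $b$-adic blocking $I_j=\{n\colon m/b^{j+1}<n\le m/b^j\}$ together with the bridge through the auxiliary index $b^{j+1}n>m$ --- where monotonicity of $k/\Gamma_k$ yields $\Gamma_{b^{j+1}n}\le(b^{j+1}n/m)\Gamma_m\le b\Gamma_m$, and iterated LRP carries this back down to $\Gamma_n\le(b/2^{j+1})\Gamma_m$ --- is exactly the mechanism needed to compensate for $(\Gamma_m)$ not being assumed monotone. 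One tiny slip: the crude count $|I_j|\le m(b-1)/b^{j+1}$ can be off by one (the number of integers in a half-open interval is bounded by its length plus one); it is cleaner to use $|I_j|\le m/b^j$, which together with $1/n<b^{j+1}/m$ on $I_j$ still gives $\sum_{n\in I_j}1/n\le b$, a block contribution of at most $(b^2/2^{j+1})\Gamma_m$, and the convergent geometric series. This is the standard route to such Dini-type estimates and almost surely matches the proof in the cited reference.
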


We say that a basis $\XB$ of a quasi-Banach space $\XX$ is \emph{squeeze-symmetric} if there are symmetric sequence spaces $\Sym_1$ and $\Sym_2$ such that 
\begin{itemize}[leftmargin=*]
\item the \emph{series transform}, defined by%$\mathcal{S}[\XB]$ defined  
$(a_n)_{n=1}^\infty\mapsto \sum_{n=1}^\infty a_n \, \xx_n$ is a bounded operator from $\Sym_1$ into $\XX$,
\item the coefficient transform  is a bounded operator from $\XX$ into $\Sym_2$, and 
\item the spaces $\Sym_1$ and $\Sym_2$ are close to each other in the sense that
\[
\Lambda_m[\Sym_1] \approx \Lambda_m[\Sym_2], \quad m\in\NN.
\]
\end{itemize}

If a basis is squeeze-symmetric  then it is democratic, and  its fundamental function is equivalent to the fundamental function of the  symmetric sequence spaces that sandwhich it. It is known that these symmetric sequence spaces can be chosen to be sequence Lorentz sequence spaces. Let us briefly recall their definition. 

A \emph{weight} will be a non-negative sequence $(w_n)_{n=1}^\infty$ with $w_1>0$. Given $0< q<\infty$ and a weight $\ww=(w_n)_{n=1}^\infty$, the  Lorentz sequence space $d_q(\ww)$ consists of all sequences $f$ in $c_{0}$ whose non-increasing rearrangement $(a_n^*)_{n=1}^\infty$ satisfies
 \[
\norm {f }_{d_q(\ww)}= \left( \sum_{n=1}^\infty ( s_n a_n^*)^q \frac{w_n}{s_n} \right)^{1/q} <\infty,
\]
where $s_n=\sum_{k=1}^n w_k$. In turn, the weak Lorentz sequence space $d_\infty(\ww)$ consists of all sequences $f=(a_n)_{n=1}^\infty\in c_0$ whose non-increasing rearrangement $(a_n^*)_{n=1}^\infty$ satisfies
\[
\norm{ f}_{d_\infty(\ww)}=\sup_m a_n^* s_n<\infty.
\]
We have $\Lambda_m[d_q(\ww)] \approx s_m$ for $m\in\NN$. Moreover if  $0< p\le q\le \infty$,
\begin{equation*}%\label{eq:pq}
\norm {f }_{d_q(\ww)} \lesssim \norm {f }_{d_p(\ww)}, \quad f\in c_0.
\end{equation*}
Although Lorentz sequence spaces are named after the weight $\ww$, they rather depend on the primitive sequence $(s_m)_{m=1}^\infty$. In fact, we have the following result.
\begin{lemma}[see \cite{AABW2021}*{\S9}]\label{lem:LorentzEq}
Let $\ww(w_n)_{n=1}^\infty$ and $\ww'=(w_n')_{n=1}^\infty$ be weights, and let $0<q\le\infty$. Then $\norm {f}_{d_q(\ww)} \approx \norm {f }_{d_p(\ww')}$ for $f\in c_0$ if and only if $\sum_{n=1}^m w_n \approx \sum_{n=1}^m w_n'$ for $m\in\NN$.
\end{lemma}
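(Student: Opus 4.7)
The plan is to prove the two directions separately. For the necessity (equivalent quasi-norms imply equivalent primitive sequences), I would test the hypothesis on the indicators $f=\Ind_A$ with $\abs{A}=m$. The non-increasing rearrangement of $\Ind_A$ is $\Ind_{\{1,\dots,m\}}$, so plugging into the definition of $\norm{\cdot}_{d_q(\ww)}$ yields exactly $\Lambda_m[d_q(\ww)]$, which by the paper's earlier remark satisfies $\Lambda_m[d_q(\ww)]\approx s_m$. Hence the equivalence of the quasi-norms forces $s_m\approx s_m'$ uniformly in $m$.

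For the sufficiency, assume $s_m\approx s_m'$. The case $q=\infty$ is immediate from $\norm{f}_{d_\infty(\ww)}=\sup_n a_n^* s_n$, since only the primitive sequence $(s_n)$ appears in the formula. For $0<q<\infty$ I would rewrite the defining series in telescoping form. Setting $s_0=0$, the factor $w_n/s_n$ equals $(s_n-s_{n-1})/s_n$, so
\begin{equation*}
\norm{f}_{d_q(\ww)}^q = \sum_{n=1}^\infty s_n^{q-1}(s_n-s_{n-1})(a_n^*)^q.
\end{equation*}
The elementary inequality $\min(1,q)(1-x)\le 1-x^q\le \max(1,q)(1-x)$ for $x\in[0,1]$, applied to $x=s_{n-1}/s_n$, shows that $s_n^{q-1}(s_n-s_{n-1}) \approx s_n^q-s_{n-1}^q$ with constants depending only on $q$, and therefore
\begin{equation*}
\norm{f}_{d_q(\ww)}^q \approx \sum_{n=1}^\infty (s_n^q-s_{n-1}^q)(a_n^*)^q.
\end{equation*}

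Next, Abel summation rearranges the last series as $\sum_{n=1}^\infty s_n^q\bigl((a_n^*)^q-(a_{n+1}^*)^q\bigr)$, a sum of non-negative terms (the boundary term vanishes for $f\in c_{00}$, and the general case follows by density of $c_{00}$ in $d_q(\ww)$). Since $s_m\approx s_m'$ entails $s_n^q\approx (s_n')^q$, this quantity is equivalent to its analogue for $\ww'$; running the chain of equivalences in reverse yields $\norm{f}_{d_q(\ww)}\approx \norm{f}_{d_q(\ww')}$, as desired.

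The argument is essentially routine once the correct reformulation is in hand. The only point that requires some care is the passage from the weight factor $w_n s_n^{q-1}$ to the telescoping difference $s_n^q-s_{n-1}^q$, which is where the elementary one-variable inequality enters. After that, equivalence of the primitive sequences propagates unproblematically through Abel summation, since one is simply re-weighting a sum of non-negative terms by comparable factors.
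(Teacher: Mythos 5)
Your proof is correct, and the telescoping/Abel-summation approach is the standard one for this kind of statement; since the paper itself gives no proof but only cites \cite{AABW2021}*{\S9}, this is almost certainly the same argument as in the reference.

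Two small points worth making precise. First, in the necessity direction, the asserted equivalence $\Lambda_m[d_q(\ww)]\approx s_m$ must hold with constants independent of the weight for the conclusion $s_m\approx s_m'$ to be uniform; this is indeed the case, since $\Lambda_m[d_q(\ww)]^q=\sum_{n=1}^m s_n^{q-1}w_n\approx s_m^q$ with constants depending only on $q$, via the same one-variable inequality you use later. Second, the ``by density of $c_{00}$'' step in passing the Abel identity to general $f\in c_0$ is slightly loose as stated: the cleaner justification is monotone convergence applied to the truncations $g_N=(a_1^*,\dots,a_N^*,0,0,\dots)$, for which the telescoped identity is exact, and all summands on both sides are non-negative, so letting $N\to\infty$ preserves the two-sided equivalence (allowing both sides to be $+\infty$). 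With those clarifications the argument is complete.
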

We refer the reader to \cite{AABW2021}*{\S9} for background on this kind of spaces.
%\begin{equation}
%\norm {\cdot }_{d_q(\ww)} \lesssim \norm {\cdot }_{d_q(\ww')}, \quad \sum_{n=1}^m w_n \lesssim \sum_{n=1}^m w_n'.
%\end{equation}

A basis $\XB$ of a quasi-Banach space $\XX$ is squeeze-symmetric if an only if there are a weight $\ww$ and $0<q<\infty$ such that the  series transform is a bounded operator from $d_q(\ww)$ into $\XX$, and the coefficient transform is a bounded operator from $\XX$ into $d_\infty(\ww)$. Moreover, if $\XX$ is $p$-convex for some $0<p\le 1$, we can pick $p=q$.

A basis $\XB$ is said to be \emph{bidemocratic} if and only if 
\[
\sup_m \frac{1}{m} \udf[\XB,\XX](m) \udf[\XB^*,\XX^*](m) <\infty.
\]
It is known that  bidemocratic bases are in particular squeeze-symmetric \cite{AABW2021}.

%Moreover, under certain restrictions, the converse holds.
%\begin{theorem}[see \cite{AABW2021}]\label{thm:SSURPBD}
%Let $\XB$ be a squeeze symmetric basis of a quasi-Banach space. Suppose that the fundamental function of $\XB$ has the URP. Then, $\XB$ is bidemocratic.
%\end{theorem}

%It will be convenient in some situations to use a different notation for Lorentz sequence spaces. Given $0<q<\infty$ and a weight $\uu=(u_n)_{n=1}^\infty$ we denote by $d(\uu,q)$ the space consisting of all $f\in c_0$ whose non-increasing rearrangement $(a_n^*)_{n=1}^\infty$ satisfies
%we set
%\[
%\Vert f\Vert_{d(\uu,q)}:=\left( \sum_{n=1}^\infty (a_n^*)^q u_n\right)^{1/q}.
%\]
%The following lemma relates the spaces $d_{1,q}(\ww)$ and $d(\uu,q)$.

%\begin{lemma}[see AABW]\label{lem:ChangeScale}
%Let $0<q<\infty$, and let $\ww=(w_n)_{n=1}^\infty$ and $\uu=(u_n)_{n=1}^\infty$ be weights. Then $\Vert \cdot \Vert_{(\uu,q)}\approx \Vert \cdot \Vert_{1,q,\ww}$ if and only if 
%\[
%\left(\sum_{n=1}^m s_n \right)^q\approx \sum_{n=1}^m u_n, \quad n\in\NN.
%\]
%\end{lemma}

In consistency with the characterizations of   greedy bases and almost greedy bases, squeeze-symmetric bases can be characterized as well as those democratic bases that satisfy an additional unconditionality-like condition.  To describe this condition, which defines truncation quasi-greedy bases and  plays a key role in the characterization of almost greedy bases as those bases that are simultaneously quasi-greedy and democractic, we need to introduce some additional terminology. 

Usually, the TGA is studied for bases $\XB=(\xx_n)_{n=1}^\infty$ that are \emph{semi-normalized}, i.e.,
\[
0<\inf_n\norm{ \xx_n} \le \sup_n \norm{ \xx_n}<\infty,
\]
and $M$-bounded, i.e.,
\[
\sup_n \norm{ \xx_n} \norm{ \xx_n^*} <\infty.
\]
That is, it is usual to assume that both $\XB$ and $\XB^*$ are norm-bounded. For the purposes of this paper, however, it will be convenient not to take for granted these assumptions a priori. 

Since the  coefficient transform maps $\XX$ into $c_0$ if and only if $\XB^*$ is norm-bounded,  there could be vectors $f\in\XX$ for which the TGA $(\GG_m(f))_{m=1}^\infty$ is not defined. To circumvent this initial drawback, we will consider greedy-type properties of $\XB$ in terms of greedy sets and greedy projections.  

A finite subset $A\subseteq\NN$ is a \emph{greedy set} of $f\in\XX$ with respect to the basis $\XB$ if $\abs{\xx_{n}^{\ast}(f)}\ge \abs{\xx_{k}^{\ast}(f)}$ whenever $n\in A$ and $k\not\in A$. 

Let $\sgn(\cdot)$ be the sign function, defined for $\lambda\in\FF\setminus\{0\}$ as $\sgn(\lambda)=\lambda/\abs{\lambda}$, and  $\sgn(0)=1$. Given a basis $\XB$ of a quasi-Banach space $\XX$ we put
\[
\varepsilon(f)=(\sgn(\xx_n^*(f)))_{n=1}^\infty\in\EE^\NN.
\]
A basis $\XB$ is said to be \emph{truncation-quasi-greedy} if there is a constant $C$ such that 
\begin{equation*}
\min_{n\in A} \abs{\xx_n^*(f)} \norm{ \Ind_{\varepsilon(f),A}[\XB,\XX]} \le C \norm{f}
\end{equation*}
for all $f\in\XX$ and all greedy sets $A$ of $f$. If the above holds for a given constant $C$, we say that $\XB$ is truncation-quasi-greedy with constant $C$. It is known \cite{AABW2021} that  quasi-greedy bases are truncation-quasi-greedy. In turn,  truncation-quasi-greedy bases are \emph{unconditional for constant coefficients} (UCC for short), i.e., 
\begin{equation}
\norm{ \Ind_{\varepsilon,A}[\XB,\XX]}\le C \norm{ \Ind_{\varepsilon,B}[\XB,\XX]}
\label{SUCC}
\end{equation}
for all $B\subseteq\NN$ finite, all $A\subseteq B$, and some constant $C$. We also point out that a basis is super-democratic if and only if it is democratic and UCC. 
Given  a basis $\XB$ and a finite subset $A$ of $\NN$, the \emph{coordinate projection}  onto the subspace $[\xx_{n}\colon n\in A]$ is the linear operator $S_A[\XB,\XX]\colon \XX\to \XX$ given by
 \[
 f\mapsto \sum_{n\in A} \xx_n^*(f) \, \xx_n.
\]
Since the basis $\XB$ is \emph{unconditional} if and only if the operators $S_A[\XB,\XX]$ are uniformly bounded, to quantify how far  a basis is from being uncoditional it is customary to use the  \emph{unconditionality parameters}
\[
\kk_m[\XB,\XX]=\sup\left\{  \norm{ S_A[\XB,\XX]}  \colon  A\subseteq\NN, \ \abs{A} \le m \right\}, \quad m\in\NN.
\]

Notice that a basis is $M$-bounded if and only if
$
\sup_m \norm{S_{\{m\}}}<\infty.
$
Hence, an $M$-bounded basis $\XB$ of a Banach space $\XX$  satisfies the
estimate
\begin{equation}\label{eq:BanachEstimate}
\kk_m[\XB,\XX] \lesssim m, \quad m\in\NN.
\end{equation}

%Once the assumption that the bases are semi-normalized is dropped, for the sake of clarity we should notice that the estimate \eqref{eq:GE} holds for semi-normalized truncation quasi-greedy bases.

We will use other unconditionality-type parameters as  instruments to obtain information on the growth of $(\kk_m)_{m=1}^\infty$. To that end, for $m\in \NN$ we put
\[
\kl_m[\XB,\XX]=\sup\left\{  \norm{ S_A[\XB,\XX](f)} \colon f \in B_\XX \cap [\xx_j\colon 1\le j \le m], \, A\subseteq\NN\right\},
\]
where $B_\XX$ denotes the closed unit ball of $\XX$. Note that $\kl_m\le \kk_m$ for all $m\in\NN$. 

Given $m\in\NN\cup\{0\}$, we set $S_m=S_{\{1,\dots,m\}}$. Note that $S_{\{m\}}=S_m-S_{m-1}$ for all $m\in\NN$, and that $\XB$ is a Schauder basis if and only if $\sup_m \norm{S_m}<\infty$. Thus, any Schauder basis is $M$-bounded. Let us emphasize here that the celebrated theorem of Enflo \cite{Enflo1973} that proves the existence of a separable Banach space without a Schauder basis does not hold for $M$-bounded bases.

\begin{theorem}[See \cite{HMVZ2008}]\label{thm:ExistsMB}
Every separable Banach space has an $M$-bounded basis.
\end{theorem}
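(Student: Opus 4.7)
The plan is to construct the $M$-bounded basis via the classical Ovsepian--Pelczynski biorthogonalization procedure. First fix a countable dense subset $(y_n)_{n=1}^\infty$ of $\XX$ and, using the separability of $\XX$ together with the Hahn--Banach theorem, a sequence $(g_n)_{n=1}^\infty$ in the unit sphere of $\XX^*$ that is norming, in the sense that $\norm{f} = \sup_n \aabs{g_n(f)}$ for every $f\in\XX$. The goal is to produce a biorthogonal system $(\xx_n,\xx_n^*)_{n=1}^\infty\subseteq \XX\times\XX^*$ such that $[\xx_n\colon n\in\NN]=\XX$ and $\sup_n \norm{\xx_n}\norm{\xx_n^*}<\infty$; linear independence then forces the biorthogonal functionals to be unique, and the pair qualifies as an $M$-bounded basis in the sense of this paper.

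The construction is carried out by a \emph{zig-zag} induction. At each odd step $2n-1$, one chooses the next pair so as to push $y_n$ into the span of the already-constructed $\xx_j$'s up to a prescribed error $\varepsilon_n\to 0$; at each even step $2n$, one instead ensures that $g_n$ lies, modulo a comparable error, in the span of the already-constructed $\xx_j^*$'s. The first requirement guarantees $[\xx_n\colon n\in\NN]=\XX$ and the second guarantees that the sequence $(\xx_n^*)_{n=1}^\infty$ separates points of $\XX$. The technical heart of each inductive step is a finite-dimensional biorthogonalization lemma of Auerbach type: given vectors $v_1,\dots,v_k$ and functionals $\phi_1,\dots,\phi_k$ whose Gram matrix $(\phi_i(v_j))$ is a small perturbation of the identity, one can correct them by a small invertible linear change of variables to obtain a truly biorthogonal pair whose norms still satisfy $\max_j \norm{v_j}\norm{\phi_j}\lesssim 1$ uniformly.

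The main obstacle lies precisely in the quantitative control of the biorthogonalization at every stage of the induction: a naive application of the Hahn--Banach theorem to produce the functionals $\xx_n^*$ yields norms that typically blow up as $n$ grows, even when $\norm{\xx_n}$ stays bounded. To keep the product $\norm{\xx_n}\norm{\xx_n^*}$ uniformly bounded one must alternate between adding vectors and adding functionals so that the newly introduced object is never almost annihilated by those previously constructed; this is arranged by letting the perturbation scales $(\varepsilon_n)_{n=1}^\infty$ decay sufficiently fast and then invoking the Auerbach-type lemma at each step to absorb the errors. The detailed verification that this procedure yields $\sup_n\norm{\xx_n}\norm{\xx_n^*}\le C$ for some absolute constant $C$ is the content of \cite{HMVZ2008}, so rather than reproducing it we simply invoke it here.
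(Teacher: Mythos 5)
The paper offers no proof of this theorem; it simply cites \cite{HMVZ2008}, and your proposal likewise defers to that reference after sketching the Ovsepian--Pe{\l}czy\'nski construction of a fundamental (indeed total) bounded biorthogonal system, so the two are in effect the same. One small imprecision worth flagging: it is fundamentality, i.e.\ $[\XB]=\XX$, rather than linear independence alone, that forces the biorthogonal functionals to be unique; your zig-zag construction does supply fundamentality, so the conclusion still stands.
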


It is clear that democratic bases are semi-normalized. In turn, as we next show, truncation-quasi-greedy bases are $M$-bounded. Recall that a \emph{block basic sequence} of a basis $\XB=(\xx_n)_{n=1}^\infty$ of a quasi-Banach space $\XX$ is a sequence $\YB=(\yy_j)_{j=1}^\infty$ in $\XX$ of the form
\[
\yy_j=\sum_{n\in D_j} a_n\, \xx_n,
\]
for some sequence $(D_j)_{j=1}^\infty$ of pairwise disjoint nonempty finite subsets of $\NN$ and some sequence $(a_n)_{n=1}^\infty$ in $\FF$ with $a_n\not=0$ for all $n\in D_j$ and all $j\in\NN$. If, for each $j\in\NN$, $\abs{a_n}$ is constant on $D_j$,  we say that $\YB$ is a \emph{constant-coefficient block basic sequence} of $\XB$.

\begin{lemma}\label{lem:TQGvsMB}
Any constant-coefficient block basic sequence of a truncation quasi-greedy basis is an $M$-bounded basis of its closed linear span.
\end{lemma}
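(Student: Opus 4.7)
The plan is to establish, for every finite linear combination $f = \sum_i b_i \yy_i$ and every $j$, the single inequality
\[
|b_j|\,\|\yy_j\| \le K \|f\|,
\]
with $K$ depending only on the truncation quasi-greedy constant $C$ of $\XB$. This bound settles both parts of the lemma at once: on the dense subspace $\langle \YB\rangle$ of $\YY := [\YB]$ the coefficient functional $\sum_i b_i\yy_i \mapsto b_j$ is bounded, so it extends uniquely to $\yy_j^* \in \YY^*$ with $\yy_j^*(\yy_k) = \delta_{j,k}$, which shows that $\YB$ is a basis of $\YY$ in the sense of the paper; and the same inequality reads $\|\yy_j\|\,\|\yy_j^*\| \le K$, which is $M$-boundedness.

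To prove the displayed inequality, set $c_j := |a_n|$ (constant on $D_j$), so that $\yy_j = c_j \Ind_{\varepsilon_j, D_j}[\XB,\XX]$ with $\varepsilon_j(n) = \sgn(a_n)$. Fix $f$ and $j$ with $b_j \ne 0$ (the case $b_j = 0$ is trivial). Since $|\xx_n^*(f)| = |b_i|c_i$ for $n \in D_i$ and $\xx_k^*(f) = 0$ for $k \notin \bigcup_i D_i$, the set
\[
A := \bigcup\bigl\{D_i \colon |b_i|c_i \ge |b_j|c_j\bigr\}
\]
is finite, contains $D_j$, and is a greedy set of $f$ with $\min_{n \in A}|\xx_n^*(f)| = |b_j|c_j$. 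Truncation quasi-greediness applied to $(f,A)$ therefore gives
\[
|b_j|\,c_j\, \|\Ind_{\varepsilon(f), A}[\XB,\XX]\| \le C\|f\|.
\]
Since a truncation quasi-greedy basis is UCC with some constant $C'$ depending on $C$, and $D_j \subseteq A$, applying UCC to the restriction of the sign vector $\varepsilon(f)$ yields
\[
\|\Ind_{\varepsilon(f), D_j}[\XB,\XX]\| \le C' \|\Ind_{\varepsilon(f), A}[\XB,\XX]\|,
\]
while a one-line sign computation on $D_j$ identifies $\Ind_{\varepsilon(f), D_j}[\XB,\XX] = \sgn(b_j)\, c_j^{-1}\yy_j$, whose norm equals $c_j^{-1}\|\yy_j\|$. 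Chaining the three relations produces $|b_j|\,\|\yy_j\| \le CC' \|f\|$.

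The only delicate point is the choice of the auxiliary set $A$: it must simultaneously contain $D_j$ (so that UCC extracts the atom $c_j^{-1}\yy_j$ from the larger greedy indicator) and realise its minimum coefficient modulus exactly at $D_j$ (so that truncation quasi-greediness contributes the factor $|b_j|c_j$). Taking unions of whole blocks $D_i$ is forced by the constant-coefficient structure of $\YB$, because that structure is precisely what makes $|\xx_n^*(f)|$ constant on each $D_i$; without it, one of the two requirements fails and the transfer argument collapses.
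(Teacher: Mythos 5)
Your proof is correct but takes a genuinely different route from the paper's. You apply truncation quasi-greediness to a single greedy set $A$ (the union of all blocks $D_i$ on which the coefficient modulus of $f$ is at least that attained on $D_j$) and then invoke the UCC property of truncation quasi-greedy bases to pass from $\Ind_{\varepsilon(f),A}$ down to the sub-block $\Ind_{\varepsilon(f),D_j}=\sgn(b_j)c_j^{-1}\yy_j$. The paper instead expresses $\yy_k$ (after implicitly normalising to $\yy_k=\Ind_{\varepsilon,D_k}$ via the affinity remark) as the difference $\Ind_{\varepsilon(f),B}-\Ind_{\varepsilon(f),A}$ of two nested greedy projections with $B\setminus A=D_k$, applies truncation quasi-greediness once to each of $A$ and $B$, and closes with the quasi-triangle inequality. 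Both arguments are short and hinge on the same structural observation — that because the coefficients of $\yy_j$ have constant modulus, the modulus of the coordinates of $f$ is constant on each block, so whole blocks can be slotted into greedy sets with $D_j$ realising the minimum. The paper's version is slightly more self-contained, as it avoids quoting the implication ``truncation quasi-greedy $\Rightarrow$ UCC''; your version avoids the telescoping and, as a bonus, treats the general constant-coefficient magnitudes $c_j$ directly rather than by an affinity reduction.
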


\begin{proof}
Let $\XB$ be a truncation quasi-greedy basis  of a quasi-Banach space $\XX$  with constant $C\ge 1$. Let $(D_j)_{j=1}^\infty$ be a pairwise disjoint  sequence  of finite subsets of $\NN$, and let $\varepsilon\in \EE^{\NN}$. We need to prove that the sequence
\[
\yy_j:=\Ind_{\varepsilon,D_j}[\XB,\XX], \quad j\in\NN,
\]
is an $M$-bounded basis of $[\yy_j\colon j\in\NN]$. Fix $(a_j)_{j=1}^\infty\in c_{00}$. Given $k\in J$, there are  greedy sets $A$ and $B$ of $
f:=\sum_{j=1}^\infty a_j \, \yy_j
$
such that $A\subseteq B$ and $B\setminus A=D_k$. We have 
\[
\abs{a_k}=\min_{n\in B} \abs{\xx_n^*(f)}\le \min_{n\in A} \abs{\xx_n^*(f)}.
\]
Hence, if $\kappa$ denotes the modulus of concavity of $\XX$,
\begin{align*}
\abs{a_k} \norm{\yy_k} 
&=\abs{a_k} \norm{\Ind_{\varepsilon,B}[\XB,\XX] - \Ind_{\varepsilon,A}[\XB,\XX]} \\
&\le \kappa  \abs{a_k} \norm{\Ind_{\varepsilon,B}[\XB,\XX] } +\kappa \abs{a_k}  \norm{\Ind_{\varepsilon,A}[\XB,\XX]}\\
&\le \kappa C \norm{f}.\qedhere
\end{align*}
\end{proof}

A sequence $(\yy_n)_{n=1}^\infty$ of a quasi-Banach space $\YY$ is said to be an \emph{affinity} of a sequence $(\xx_n)_{n=1}^\infty$ if there is a sequence $(\lambda_n)_{n=1}^\infty$ in $\FF\setminus\{0\}$ such that $\yy_n=\lambda_n \, \xx_n$ for all $n\in\NN$. Suppose that $\YB$ is an affinity of $\XB$. Then, if $\XB$ is a basis, so is $\YB$. Morever, if $\XB$ is $M$-bounded, so is $\YB$, and if $\XB$ is a Schauder basis so is $\YB$. We also note that $\kl_m[\XB,\XX]=\kl_m[\YB,\XX]$ and $\kk_m[\XB,\XX]=\kk_m[\YB,\XX]$ for all $m\in\NN$.

Given quasi-Banach spaces $\XX$ and $\YY$, $\XX\oplus\YY$ stands for its directed sum endowed with the quasi-norm
\[
\norm{ (f,g)}=\max\{ \norm{f}, \norm{g}\}, \quad f\in\XX,\; g\in\YY.
\]

O course, $\XX\oplus\YY$ is a quasi-Banach space. Given a direct sum $\XX\oplus\YY$ we denote by $\pi_1$ and $\pi_2$ the projections onto the first and the second components, respectively.

 If $\XB=(\xx_n)_{n=1}^\infty$ and $\YB=(\yy_n)_{n=1}^\infty$ are sequences in $\XX$ and $\YY$, respectively,  its direct sum is the sequence $\XB\oplus\YB=(\zz_n)_{n=1}^\infty$ in $\XX\oplus\YY$ defined by
\[
\zz_{2n-1}=(\xx_n,0), \quad \zz_{2n}=(0,\yy_n), \quad n\in\NN. 
\]
It is clear that if $\XB$ and $\YB$ are bases, then $\XB\oplus\YB$ is a basis with coordinate functionals $\XB^*\oplus\YB^*$. We set $\XX^2=\XX\oplus\XX$ and $\XB^2=\XB\oplus\XB$.

\section{Extension of the DKK method to  general bases}\label{sect:DKK}

%\section{{\Anso  Main theoretical results}}
\noindent Let $\Sym$ be a locally convex subsymmetric sequence space. Set 
$
\Lambda_m=\Lambda_m[\Sym]
$
for $m\in\NN$. Let $\sigma=(\sigma_n)_{n=1}^\infty$ be an ordered partition of $\NN$.
Consider the sequence $\VB=\VB[\Sym,\sigma]=(\vv_n)_{n=1}^\infty$ in $\Sym$ given by
\[
\vv_n=\frac{1}{\Lambda_{\aabs{\sigma_n}}}\Ind_{\sigma_n}, \quad n\in\NN.
\]
The sequence $\VB^*=\VB^*[\Sym,\sigma]=(\vv_n^*)_{n=1}^\infty$ in $\Sym^*$ given by
\[
\vv_n^*=\frac{\Lambda_{\aabs{\sigma_n}}}{\abs{\sigma_n}} \Ind_{\sigma_n}^*, \quad n\in\NN
\]
is biorthogonal to $\VB$. By construction, $\VB$ is normalized. In turn, $\VB^*$ is semi-normalized by \eqref{eq:SubsBidem}.
%we have $1\le \gamma_n:=\norm{ \vv_n^*} \le 2$ for all $n\in\NN$ (see [LinTza] or [AADK]). 
%Note that, if the basis is symmetric, $\gamma_n=1$ for all $n\in\NN$.

Let $\Ave(f,A)$ denote the average of $f=(a_n)_{n=1}^\infty$ on a finite set $A\subseteq\NN$, i.e.,
\[
\Ave(f,A)=\frac{1}{\abs{A}} \sum_{k\in A} a_k.
\]
Consider the averaging projection $P_\sigma\colon\FF^\NN \to\FF^\NN$ defined by
\[
P_\sigma(f)=(b_k)_{k=1}^\infty, \quad b_k=\Ave(f,\sigma_n) \text{ if } k\in\sigma_n.
\]
Let $Q_\sigma$ be the complementary projection, that is, $Q_\sigma=\Id_{\FF^\NN}-P_\sigma$. By \cite{LinTza1977} or \cite{AADK2018} we have $\norm{ P_\sigma}_{\Sym\to\Sym}\le 2$. Consequently, $\norm{ Q_\sigma}_{\Sym\to\Sym}\le 3$. Note that
\[
P_\sigma(f)=\sum_{n=1}^\infty \vv_n^*(f) \, \vv_n, \quad f\in\FF^\NN.
\]

Let $\XB=(\xx_n)_{n=1}^\infty$ be a linearly independent sequence in a Banach $\XX$ that generates the entire space $\XX$. We define
\begin{align*}
\norm{ f }_{\XB,\Sym,\sigma}&=\norm{ Q_\sigma(f)}_\Sym+\norm{ L[\VB[\Sym,\sigma],\XB](P_\sigma(f))},\\
&=\norm{ Q_\sigma(f)}_\Sym+\norm{ \sum_{n=1}^\infty \vv^*_n(f)\, \xx_n}, \quad f\in c_{00},
\end{align*}
where 
\[L[\YB,\XB]\colon \langle \YB\rangle\to \langle \XB\rangle,\qquad  \yy_n\mapsto \xx_n,
\] stands for the  operator from the linear span of a basis $\YB=(\yy_n)_{n=1}^\infty$ onto the linear span  of a basis $\XB=(\xx_n)_{n=1}^\infty$.
Note that $P_\sigma(f)\in\langle \VB[\Sym,\sigma]\rangle$ for all $f\in c_{00}$. Thus, $\norm{ \cdot}_{\XB,\Sym,\sigma}$ is well-defined.
\begin{lemma}
Let $\Sym$ be a symmetric sequence space, $\sigma$ be an ordered partition of $\NN$ and $\XB$ be a linearly independent sequence in a quasi-Banach space $\XX$. Then $\norm{ \cdot}_{\XB,\Sym,\sigma}$ is a quasi-norm on $\XX$.
\end{lemma}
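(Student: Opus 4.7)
The plan is to verify the three axioms of a quasi-norm at the level where the expression $\norm{\cdot}_{\XB,\Sym,\sigma}$ is defined, namely on $c_{00}$. The definition splits as the sum of two pieces, each of which is the composition of a linear map with a (quasi-)norm, so two of the three axioms are almost automatic and the only substantive point is positive-definiteness.

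For absolute homogeneity I would observe that $P_\sigma$, $Q_\sigma=\Id_{\FF^\NN}-P_\sigma$, and $L[\VB[\Sym,\sigma],\XB]$ are all linear, and both $\norm{\cdot}_\Sym$ and $\norm{\cdot}_\XX$ are absolutely homogeneous; hence $\norm{\lambda f}_{\XB,\Sym,\sigma}=\abs{\lambda}\norm{f}_{\XB,\Sym,\sigma}$ for every $\lambda\in\FF$ and every $f\in c_{00}$. For the quasi-triangle inequality I would let $\kappa_\Sym$ and $\kappa_\XX$ denote the moduli of concavity of $\Sym$ and $\XX$, respectively, set $\kappa=\max\{\kappa_\Sym,\kappa_\XX\}$, push $f+g$ through the linear maps $Q_\sigma$ and $L\circ P_\sigma$, and then apply the quasi-triangle inequality in each of $\Sym$ and $\XX$ to obtain
\[
\norm{f+g}_{\XB,\Sym,\sigma}\le \kappa\bigl(\norm{f}_{\XB,\Sym,\sigma}+\norm{g}_{\XB,\Sym,\sigma}\bigr).
\]

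The step where the hypotheses are genuinely used is positive-definiteness, and this is the only part I expect to require actual argument. Suppose $\norm{f}_{\XB,\Sym,\sigma}=0$ for some $f\in c_{00}$. Then both summands in the definition vanish. From $\norm{Q_\sigma f}_\Sym=0$ I conclude $Q_\sigma f=0$ in $\Sym$, hence $f=P_\sigma f$ as elements of $\FF^\NN$. From the vanishing of the second summand I get $\sum_{n=1}^\infty \vv_n^*(f)\,\xx_n=0$ in $\XX$; this is in fact a finite sum, because $f$ has finite support and each $\vv_n^*$ depends only on the coordinates of $f$ lying in $\sigma_n$, so at most finitely many $\vv_n^*(f)$ are nonzero. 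Now I invoke the hypothesis that the sequence $\XB$ is linearly independent in $\XX$ to deduce $\vv_n^*(f)=0$ for every $n$. Consequently $P_\sigma f=\sum_{n=1}^\infty \vv_n^*(f)\,\vv_n=0$, and combining this with $f=P_\sigma f$ gives $f=0$.

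In short, the entire verification is bookkeeping once one has arranged the definition as a sum of two composites of linear maps with (quasi-)norms; the sole nontrivial input is the linear independence of $\XB$, which is precisely what prevents $\norm{\cdot}_{\XB,\Sym,\sigma}$ from degenerating to a quasi-seminorm.
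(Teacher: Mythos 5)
Your proof is correct and follows the same approach as the paper: the paper dismisses homogeneity and the quasi-triangle inequality as clear and then verifies positive-definiteness by the identical argument, namely that vanishing of both summands forces $Q_\sigma(f)=0$ and $\vv_n^*(f)=0$ for all $n$ (the latter via linear independence of $\XB$), whence $P_\sigma(f)=0$ and $f=P_\sigma(f)+Q_\sigma(f)=0$. You simply spell out the routine parts (homogeneity, the quasi-triangle constant $\kappa=\max\{\kappa_\Sym,\kappa_\XX\}$, and why the sum is finite) that the paper leaves implicit.
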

\begin{proof}
It is clear from definition that $\norm{ \cdot}_{\XB,\Sym,\sigma}$ is a semi-quasi-norm. Assume that $\norm{ f }_{\XB,\Sym,\sigma}=0$. Then $Q_\sigma(f)=0$ and $\vv_n^*(f)=0$ for all $n\in\NN$. Then $P_\sigma(f)=0$ and so $f=P_\sigma(f)+Q_\sigma(f)=0$.
\end{proof}

We denote by $\YY=\YY[\XB,\Sym,\sigma]$ the completion of the  quasi-normed space $(c_{00}, \norm{ \cdot }_{\XB,\Sym,\sigma})$.

\begin{lemma}\label{lem:isomorphism}
Let $\Sym$ be a locally convex subsymmetric sequence space, $\sigma$ be an ordered partition of $\NN$, and $\XB$ be a linearly independent sequence in a quasi-Banach space $\XX$. Suppose that $\XB$ generates $\XX$. Then $ \YY[\XB,\Sym,\sigma]\simeq Q_\sigma(\Sym)\oplus \XX$. To be precise $Q_\sigma(\Sym)\cap c_{00}$ is dense in $Q_\sigma(\Sym)$, and the maps
\begin{align*}
S&:=(Q_\sigma, L[\VB[\Sym,\sigma],\XB]\circ P_\sigma)\colon c_{00}\to (Q_\sigma(\Sym)\cap c_{00}) \oplus \langle \XB\rangle\\
T&:=\pi_1+ L[\XB,\VB[\Sym,\sigma]]\circ \pi_2\colon (Q_\sigma(\Sym)\cap c_{00}) \oplus \langle\XB\rangle \to c_{00}
\end{align*}
are inverse linear bijections that extend to inverse isometries.
\end{lemma}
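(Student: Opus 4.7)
The plan is to verify, one step at a time, that $S$ and $T$ are well-defined, mutually inverse, and isometric on dense subspaces, and then extend by uniform continuity to the completion.

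First I would establish the density claim. Since each $\sigma_n$ is finite (which is implicit in the definition of $\vv_n$ through $\Lambda_{|\sigma_n|}$), the averaging operator $P_\sigma$ sends $c_{00}$ into $c_{00}$, and so does $Q_\sigma=\Id-P_\sigma$. Because $c_{00}$ is dense in $\Sym$ and $Q_\sigma$ is bounded on $\Sym$ (with $\norm{Q_\sigma}\le 3$), the subspace $Q_\sigma(c_{00})=Q_\sigma(\Sym)\cap c_{00}$ is dense in the closed range $Q_\sigma(\Sym)$. Meanwhile, $\langle\XB\rangle$ is dense in $\XX$ by assumption.

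Next I would check that $S$ and $T$ are mutually inverse linear bijections on the stated subspaces. For $f\in c_{00}$, the decomposition $f=P_\sigma(f)+Q_\sigma(f)$ with $P_\sigma(f)=\sum_n \vv_n^*(f)\,\vv_n$ (a finite sum) makes $S(f)$ well defined, and $T(S(f))=Q_\sigma(f)+L[\XB,\VB]\circ L[\VB,\XB](P_\sigma(f))=Q_\sigma(f)+P_\sigma(f)=f$. Conversely, given $(g,h)$ with $g\in Q_\sigma(\Sym)\cap c_{00}$ and $h=\sum_n b_n\,\xx_n\in\langle\XB\rangle$, one has $T(g,h)=g+\sum_n b_n\,\vv_n$; using $Q_\sigma(g)=g$, the identity $P_\sigma(\sum_n b_n\,\vv_n)=\sum_n b_n\,\vv_n$, the vanishing $\vv_m^*(g)=0$ (valid because $\vv_m^*$ is a scalar multiple of $\sum_{k\in\sigma_m}\ee_k^*$ and the average of any $g\in Q_\sigma(\Sym)$ on $\sigma_m$ is zero by construction), biorthogonality of $(\VB,\VB^*)$, and the linear independence of $\XB$, one recovers $S(T(g,h))=(g,h)$.

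The isometry property is then immediate from the very definition of the quasi-norm: the two summands in $\norm{f}_{\XB,\Sym,\sigma}$ are exactly $\norm{Q_\sigma(f)}_\Sym$ and $\norm{L[\VB,\XB](P_\sigma(f))}_\XX$, that is, the norms of the two coordinates of $S(f)$ in the sum-normed direct sum $Q_\sigma(\Sym)\oplus\XX$. Hence $S$ is an isometric embedding of the dense subspace $c_{00}\subset\YY[\XB,\Sym,\sigma]$ onto the dense subspace $(Q_\sigma(\Sym)\cap c_{00})\oplus\langle\XB\rangle$ of $Q_\sigma(\Sym)\oplus\XX$, and it extends uniquely by uniform continuity to an isometric isomorphism between the completions, with inverse given by the continuous extension of $T$.

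I do not anticipate any substantial obstacle; the argument is essentially bookkeeping. The single nontrivial identity is $\vv_m^*\circ Q_\sigma=0$, and that reduces to the observation that $\sum_{k\in\sigma_m}(Q_\sigma f)_k=0$ by the defining property of $Q_\sigma$.
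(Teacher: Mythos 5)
Your proof is correct and follows essentially the same route the paper does; the paper's own proof is merely a pointer to the analogous argument in \cite{AADK2018} together with the explicit formulas for $S$ and $T$, and your verification (density of $Q_\sigma(c_{00})=Q_\sigma(\Sym)\cap c_{00}$ in the closed range $Q_\sigma(\Sym)$, the two compositions giving the identity, and the identification of the two summands of $\norm{\cdot}_{\XB,\Sym,\sigma}$ with the two coordinate quasi-norms) fills in exactly the bookkeeping that proof omits. One small remark: as you correctly observe, the isometry only holds when the target direct sum carries the \emph{sum} quasi-norm, which is at variance with the paper's earlier declared convention of equipping $\XX\oplus\YY$ with the $\max$ quasi-norm; this is a harmless imprecision in the paper (the two are $2$-equivalent), not a defect of your argument, but it is worth flagging since the lemma's statement says ``isometries.''
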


\begin{proof}
The proof of the corresponding result from \cite{AADK2018} holds in this general setting. Note that
\begin{align*}
S(f)&=\left( Q_\sigma(f), \sum_{n=1}^\infty \vv_n^*(f)\, \xx_n\right),\quad f\in \YY[\XB,\Sym,\sigma]\cap c_{00},\\
T(g,x)&=g+ \sum_{n=1}^\infty \xx_n^*(x)\, \vv_n, \quad g\in Q_\sigma(\Sym)\cap c_{00},\; x\in \langle \XB\rangle.\qedhere
\end{align*}
\end{proof}

\begin{proposition}\label{prop:basisY}
Let $\Sym$ be a symmetric space, let $\sigma$ be an ordered partition of $\NN$ with $\abs{\sigma_n}\ge 2$ for all $n\in\NN$, and let $\XB$ be a linearly independent sequence of a quasi-Banach space $\XX$ with $[\XB]=\XX$. Then the unit vector system $\EB$ is a basis of $\YY=\YY[\XB,\Sym,\sigma]$ if and only if $\XB$ is a basis of $\XX$. Moreover, in the case when $\XB$ is a basis of $\XX$  the following statements hold:
\begin{enumerate}[label={(\roman*)},leftmargin=*,widest=ii]
\item\label{prop:basisY:1} $\EB$ is a total basis of $\YY$  if and only if $\XB$ is a  total basis of $\XX$.

\item\label{prop:basisY:5}
The dual basis of the unit vector system of $\YY$ is equivalent to the unit vector system of $\YY[\BB,\Sym^*,\sigma]$, where $\BB=( \xx_n^*/\norm{\vv_n^*})_{n=1}^\infty$.

\item\label{prop:basisY:2} $\norm{ \ee_n}_{\YY} \approx \max\{ 1, \norm{ \xx_n} \Lambda_{\aabs{\sigma_n}}/ \abs{\sigma_n}\}$ for $n\in\NN$;
\item\label{prop:basisY:3} $ \norm{ \ee_n}_{\YY^*} \approx \max\{ 1, \norm{ \xx_n^*}/ \Lambda_{\aabs{\sigma_n}}\}$ for $n\in\NN$;
\item\label{prop:basisY:4} $\EB$ is a semi-normalized and $M$-bounded basis of $\YY$ if and only if
\[
\norm{ \xx_n}\lesssim \frac{\abs{\sigma_n}}{\Lambda_{\aabs{\sigma_n}}} \;\text{ and }\;
\norm{ \xx_n^*} \lesssim \Lambda_{\aabs{\sigma_n}},\; n\in\NN.
\]

\item\label{prop:basisY:8}  $\kl_{M_n}[\EB,\YY]\gtrsim \kl_n[\XB,\XX]$, where $M_n=\sum_{k=1}^n \abs{\sigma_k}$.

\item\label{prop:basisY:6} Set $\YY_n=[\ee_k \colon k\in\sigma_n]$. Then, $\XB$ is a Schauder basis of $\XX$ if and only if $(\YY_n)_{n=1}^\infty$ is a Schauder decomposition of $\YY$.

\item\label{prop:basisY:9} If $\EB$ is an UCC basis of $\YY$, then $\XB$ is semi-normalized.

\item\label{prop:basisY:12} The block basic sequence $(\Ind_{\sigma_n}[\EB,\YY])_{n=1}^\infty$ is isometrically equivalent to an affinity of  $\XB$.

\item\label{prop:basisY:10} 
Set $\ww=(\Lambda_n - \Lambda_{n-1})_{n=1}^\infty$  and $\uu=(\Lambda_n / n)_{n=1}^\infty$.
%and 
%\[
%M_n=\sum_{k=1}^n \abs{\sigma_n}, \quad n\in\NN.
%\]
Suppose that  $\XX$ is locally convex, that $\XB$ is semi-normalized and $M$-bounded, and that $1+M_{n-1}\lesssim \abs{\sigma_n}$ for $n\in\NN$. Then,
\[
d_1(\uu)\subseteq \YY\subseteq d_\infty(\ww)
\]
(with continuous embeddings).

\item\label{prop:basisY:11} Suppose that $\XX$ is locally convex, $(\Lambda_n)_{n=1}^\infty$ has the LRP, and $M_n\lesssim \abs{\sigma_n}$ for $n\in\NN$. The following are equivalent:
\begin{enumerate}[label={(\arabic*)},leftmargin=*,widest=3]
\item\label{SS:a}  The unit vector system is a squeeze symmetric basis of $\YY$.
\item\label{SS:b}  The unit vector system is a truncation quasi-greedy basis of $\YY$.
\item\label{SS:d} $\XB$ is semi-normalized basis  $M$-bounded basis of $\XX$.
\end{enumerate}

\end{enumerate}
\end{proposition}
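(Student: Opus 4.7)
The implication \ref{SS:a}$\Rightarrow$\ref{SS:b} is immediate from the characterization, recalled earlier in this section, of squeeze-symmetric bases as precisely those democratic bases that satisfy the additional unconditionality-like condition defining truncation quasi-greediness.

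For \ref{SS:b}$\Rightarrow$\ref{SS:d}, suppose $\EB$ is truncation quasi-greedy in $\YY$. Since truncation quasi-greedy bases are UCC, item \ref{prop:basisY:9} forces $\XB$ to be semi-normalized. To obtain $M$-boundedness of $\XB$, I apply Lemma~\ref{lem:TQGvsMB} to the constant-coefficient block basic sequence $(\Ind_{\sigma_n}[\EB,\YY])_{n=1}^\infty$, which is therefore $M$-bounded as a basis of its closed linear span; by item \ref{prop:basisY:12} this block basis is isometrically equivalent to an affinity of $\XB$, and since $M$-boundedness is preserved under passage to affinities, $\XB$ itself is $M$-bounded.

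For \ref{SS:d}$\Rightarrow$\ref{SS:a}, note first that the hypothesis $M_n\lesssim\aabs{\sigma_n}$ implies in particular $1+M_{n-1}\lesssim\aabs{\sigma_n}$, so item \ref{prop:basisY:10} applies and yields continuous embeddings
\[
d_1(\uu)\subseteq \YY\subseteq d_\infty(\ww),
\]
with $\uu=(\Lambda_n/n)_{n=1}^\infty$ and $\ww=(\Lambda_n-\Lambda_{n-1})_{n=1}^\infty$ (convention $\Lambda_0:=0$). The fundamental function of $d_\infty(\ww)$ is $\sum_{k=1}^{m}(\Lambda_k-\Lambda_{k-1})=\Lambda_m$, while $\Lambda_m[d_1(\uu)]\approx\sum_{k=1}^{m}\Lambda_k/k$. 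By Lemma~\ref{lem:ULRP}, the Dini condition following from the LRP of $(\Lambda_n)_{n=1}^\infty$ gives $\sum_{k=1}^{m}\Lambda_k/k\approx\Lambda_m$. Hence $\EB$ is sandwiched in $\YY$ between two symmetric Lorentz sequence spaces with equivalent fundamental functions, which is exactly the definition of squeeze-symmetry. The substantial technical input is item \ref{prop:basisY:10}, already available at this point; everything else is bookkeeping that combines the LRP/Dini step with the block-basis characterization from Lemma~\ref{lem:TQGvsMB} and items \ref{prop:basisY:9}, \ref{prop:basisY:12}.
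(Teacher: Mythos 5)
Your proof of item \ref{prop:basisY:11} is correct and follows essentially the same route as the paper, whose terse argument cites exactly the same ingredients: Lemma~\ref{lem:TQGvsMB}, Lemma~\ref{lem:ULRP}, Lemma~\ref{lem:LorentzEq}, and items \ref{prop:basisY:9}, \ref{prop:basisY:12}, \ref{prop:basisY:10}, run through the cycle \ref{SS:a}$\Rightarrow$\ref{SS:b}$\Rightarrow$\ref{SS:d}$\Rightarrow$\ref{SS:a}. Two minor remarks: in the step \ref{SS:d}$\Rightarrow$\ref{SS:a} you should invoke Lemma~\ref{lem:LorentzEq} explicitly to replace $d_1(\uu)$ by $d_1(\ww)$ before applying the paper's characterization of squeeze-symmetry (which uses the same weight on both sides, and $d_\infty(\ww)$ is not a ``symmetric sequence space'' since $c_{00}$ need not be dense in it); and your plan addresses only item \ref{prop:basisY:11}, leaving the preliminary ``iff'' statement and items \ref{prop:basisY:1}--\ref{prop:basisY:10} and \ref{prop:basisY:12} unproved.
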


\begin{proof}
Given $x\in \langle\XB\rangle$ and $n\in\NN$, let $\xx_n^{\#}(x)$ be the $n$th coordinate of the expansion of $x$ with respect to $\XB$. 
Via the isometry provided by Lemma~\ref{lem:isomorphism}, the functional $\ee_k^*$ corresponds with the map
\begin{equation}\label{eq:zk}
(g,x)\mapsto \zz_k^*(g,x):=\ee_k^*(g) + \frac{1}{\Lambda_{\aabs{\sigma_n}}}\xx_n^{\#}(x),
\end{equation}
where $k\in\sigma_n$. Since $\abs{\ee_k^*(f)}\le \norm{ f}_{\Sym}$ for all $f\in Q_\sigma(\Sym)$, $\ee_k^*$ defines a bounded operator on $\YY$, if and only if $\xx_n^{\#}$ extends to a bounded operator on $\XX$, in which case $\ee_k^*|_\YY$ corresponds with the map 
\[
\zz_k^*\colon Q_\sigma(\Sym) \oplus \XX \to \FF, \quad (g,x) =\ee_k^*(g) + \frac{1}{\Lambda_{\aabs{\sigma_n}}}\xx_n^{*}(x),
\quad k\in\sigma_n.
\]

This expression for $\zz_k^*$ gives that the unit vector system is a total basis if and only if the map
\[
(g,x) \mapsto F(g,x)
=\left(\ee_k^*(g) + \frac{1}{\Lambda_{\aabs{\sigma_n}}}\xx_n^{*}(x)\right)_{n=1}^{\infty}, \quad 
 g\in Q_\sigma(\Sym), \quad x\in \XX,
 \]
is one-to-one.
Notice that, for all  $Q_\sigma(\Sym)$ and $ x\in \XX$,
\[
F(g,x)=g+\sum_{n=1}^\infty  \frac{1}{\Lambda_{\aabs{\sigma_n}}}\xx_n^{*}(x) \Ind_{\sigma_n}
\]
Choosing $g=0$  we obtain the ``only'' if part of \ref{prop:basisY:1}. Assume that $\XB$ is total and that $F(g,x)=0$. Then, averaging on $\sigma_n$ we get $\xx_n^{*}(x)=0$. Consequently, $x=0$. Hence, $g=F(g,x)=0$.

To prove \ref{prop:basisY:5} we set 
\[
\UB:=\VB[\Sym^*,\sigma]=\left( \frac{\vv_n^*}{\norm{\vv_n^*}}\right)_{n=1}^\infty.
\]
There is a natural isomorphism between the dual space of $Q_\sigma(\Sym)\oplus\XX$ and $Q_\sigma(\Sym^*)\oplus\XX^*$. In turn, since $\BB$ is a basic sequence of $\XX^*$ and $\Sym^*$ is a locally convex subsymmetric sequence space, $Q_\sigma(\Sym_0^*)\oplus[\BB]$ can be identified with $\YY[\BB,\Sym^*,\sigma]$. Via this identification we obtain a dual pairing between $\YY[\BB,\Sym_0^*,\sigma]$ and $\YY[\XB,\Sym,\sigma]$ given by
\begin{align*}
(g,f) & \mapsto \langle Q_\sigma(g) ,Q_\sigma(f) \rangle+ L[\UB,\BB](P_\sigma(g)) ( L[\VB,\XB](P_\sigma(f)) )\\
& \langle Q_\sigma(g) ,Q_\sigma(f) \rangle+ L[\VB^*,\XB^*](P_\sigma(g)) ( L[\VB,\XB](P_\sigma(f)) )\\
&= \langle Q_\sigma(g) ,Q_\sigma(f) \rangle+ \langle P_\sigma(g) ,P_\sigma(f) \rangle\\
&= \langle f,g \rangle.
\end{align*}

$\ref{prop:basisY:2}$ The image of $\ee_k$ by the isomorphism provided by Lemma~\ref{lem:isomorphism} is, if $k\in\sigma_n$,
\[
\zz_k:=\left( \left(1-\frac{1}{\abs{\sigma_n}}\right) \ee_k -\frac{1}{\abs{\sigma_n}} \Ind_{\sigma_n\setminus\{ k\}}, \frac{\Lambda_{\aabs{\sigma_n}} }{\abs{\sigma_n}} \xx_n\right).
\]
Therefore,
\begin{align*}
\norm{ \ee_k}=\norm{ \zz_k}
&\approx \max\left\{ 1-\frac{1}{\abs{\sigma_n}}+\frac{ \Lambda_{\aabs{\sigma_n}-1}}{\abs{\sigma_n}},
\frac{\Lambda_{\aabs{\sigma_n}} }{\abs{\sigma_n}} \norm{ \xx_n}\right\}\\
&\approx \max\left\{ 1, \frac{\Lambda_{\aabs{\sigma_n}} }{\abs{\sigma_n}} \norm{ \xx_n}\right\}.
\end{align*}

$\ref{prop:basisY:3}$ follows from combining $\ref{prop:basisY:2}$ and $\ref{prop:basisY:5}$. In turn, $\ref{prop:basisY:4}$ is a consequence of $\ref{prop:basisY:2}$ and \ref{prop:basisY:3}.  The statements $\ref{prop:basisY:8}$ and $\ref{prop:basisY:6}$ follow from the identity
\begin{equation}\label{eq:projection}
S_{\cup_{k\in A}\sigma_k}[\EB,\YY](f)=\left(S_{\cup_{k\in A}\sigma_k}[\EB,\Sym] (Q_\sigma(f)) , S_A[\XB,\XX] \left(\sum_{n=1}^\infty \vv_n^*(f) \, \xx_n\right)\right).
\end{equation}

The proof of  $\ref{prop:basisY:10}$ goes along the lines of the corresponding statement  from \cite{AADK2018}, which also works in this more general setting. 

 To prove $\ref{prop:basisY:9}$,  for each $n\in\NN$  we choose a partition $(A_n,B_n)$ of $\sigma_n$ such that $0\le \abs{A_n} -  \abs{B_n} \le 1$. We have
$$
\norm{\Ind_{A_n}+\Ind_{B_n}}=\Lambda_{\aabs{\sigma_n}}{\|\xx_n\|}
$$
\mbox{ and }
\[\norm{\Ind_{A_n}-\Ind_{B_n}}=  \frac{\abs{\sigma_n}-\gamma_n}{\abs{\sigma_n}} \Lambda_{\aabs{\sigma_n}} +\frac{ \Lambda_{\aabs{\sigma_n}}}{{\aabs{\sigma_n}}}\norm{\xx_n} \gamma_n,
\]

where $\gamma_n=0$ if $\sigma_n$ is even and $\gamma_n=1$ otherwise.  Suppose that the unit vector basis of $\YY$ is UCC. Then, combining these estimates with 
\[
\norm{\Ind_{A_n}+\Ind_{B_n}}\approx \norm{\Ind_{A_n}-\Ind_{B_n}}, \quad n\in\NN
\] 
gives that $\XB$ is semi-normalized.

$\ref{prop:basisY:12}$ is clear.  Finally, by  Lemma~\ref{lem:TQGvsMB}, and taking into account that, by Lemma~\ref{lem:ULRP} and Lemma~\ref{lem:LorentzEq}, $d_1(\ww)=d_1(\uu)$ up to an equivalent norm, $\ref{prop:basisY:11}$ follows as  an easy consequence of $\ref{prop:basisY:9}$, $\ref{prop:basisY:12}$, and $\ref{prop:basisY:10}$.
\end{proof}

\section{ Existence of non $M$-bounded bases in Banach spaces}\label{sect:NMB}\noindent

\noindent This section is geared towards the construction of bases to which we will apply the DKK method with a purpose that will become clear below. We start with a bidimensional construction. 

Given $R\ge \sqrt{2}$, we consider the pair of vectors of $\FF^2$ given by 
\[
\hh_{1,R}=(1,0), \quad \hh_{2,R}=\left( 1-\frac{2}{R^2}, \frac{2}{R} \sqrt{1-\frac{1}{R^2}} \right).
\]
Notice that if $\alpha\in(0,\pi/4]$ is defined by $\sin(\alpha)=1/R$, then  $\hh_{2,R}=(\cos(2\alpha), \sin(2\alpha))$. We consider $\FF^2$ equipped with the Euclidean distance. %Note that scalar product in $\RR^2$ is given, in terms of the multiplication in the field $\CC$, by
%\[
%\langle f, g\rangle=\Re(fg), \quad f,g\in\RR^2
%\]
\begin{lemma}\label{lem:R2Basis}
Given $R\ge \sqrt{2}$, the vectors
$\{\hh_{1,R},\hh_{2,R}\}$ form  a basis of $\FF^2$ whose biorthogonal functionals are
\[
\hh_{1,R}^*=\frac{R^2}{2 \sqrt{R^2-1} }(\sin(2\alpha), -\cos(2\alpha)), \quad \hh_{2,R}^*=\frac{R^2}{2 \sqrt{R^2-1}}(0,1),
\]
where $\alpha\in(0,\pi/4]$ is given by $\sin(\alpha)=1/R$.
Moreover,  
\[\norm{ \hh_{1,R}}=\norm{ \hh_{2,R}}=1, \norm{ \hh_{1,R}^*}=\norm{ \hh_{2,R}^*}\approx R, 
\norm{ \hh_{1,R}-\hh_{2,R}}=2/R,\] and 
\[
\sqrt{x^2+y^2} \le \norm{ x\, \hh_{1,R} +y\, \hh_{2,R}} \le x+y, \quad x,y\ge 0.
\]
\end{lemma}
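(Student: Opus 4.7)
The plan is to treat everything through the trigonometric parametrization already suggested by the statement: let $\alpha\in(0,\pi/4]$ be defined by $\sin(\alpha)=1/R$ (so $\cos(\alpha)=\sqrt{1-1/R^2}$), and use the double-angle identities to record that $\hh_{2,R}=(\cos 2\alpha,\sin 2\alpha)$. From the hypothesis $R\ge\sqrt{2}$ we have $\alpha\in(0,\pi/4]$, hence $2\alpha\in(0,\pi/2]$, which in particular gives $\sin(2\alpha)>0$ and $\cos(2\alpha)\ge 0$. These two sign facts are what make everything work cleanly.

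First I would dispatch the normalizations and the basis property. Since $\hh_{1,R}=(1,0)$ and $\hh_{2,R}=(\cos 2\alpha,\sin 2\alpha)$, both have Euclidean norm $1$; and since $\sin(2\alpha)>0$, the two vectors are linearly independent in $\FF^2$, so they form a basis. To verify the formula for the dual, I would plug the proposed $\hh_{1,R}^*, \hh_{2,R}^*$ (viewed as vectors acting by the Euclidean inner product) into $\hh_{i,R}^*(\hh_{j,R})$ and compute directly; the only nontrivial identity is $\frac{R^2}{2\sqrt{R^2-1}}\sin(2\alpha)=1$, which is immediate after substituting $\sin(2\alpha)=\frac{2}{R}\sqrt{1-1/R^2}=\frac{2\sqrt{R^2-1}}{R^2}$.

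Next, the norms of the functionals: each dual vector has Euclidean norm $\frac{R^2}{2\sqrt{R^2-1}}$ (using $\sin^2(2\alpha)+\cos^2(2\alpha)=1$ for $\hh_{1,R}^*$), so they are equal. To show this is $\approx R$ uniformly for $R\ge\sqrt{2}$, I would bound $\sqrt{R^2-1}$ above by $R$ and below by $R/\sqrt{2}$ (the latter using $R^2-1\ge R^2/2$, which is equivalent to $R\ge\sqrt{2}$), obtaining
\[
\frac{R}{2} \le \frac{R^2}{2\sqrt{R^2-1}} \le \frac{R\sqrt{2}}{2}.
\]
The distance $\|\hh_{1,R}-\hh_{2,R}\|$ is a one-line calculation: $\hh_{1,R}-\hh_{2,R}=(2/R^2,\,-2\sqrt{R^2-1}/R^2)$, whose squared norm is $4/R^2$.

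Finally, for the two-sided estimate on $\|x\hh_{1,R}+y\hh_{2,R}\|$ with $x,y\ge 0$, the upper bound is just the triangle inequality combined with $\|\hh_{i,R}\|=1$. For the lower bound I would expand
\[
\|x\hh_{1,R}+y\hh_{2,R}\|^2 = (x+y\cos 2\alpha)^2 + (y\sin 2\alpha)^2 = x^2+y^2+2xy\cos(2\alpha),
\]
and invoke $\cos(2\alpha)\ge 0$ (from $2\alpha\le\pi/2$) together with $xy\ge 0$ to conclude $\|x\hh_{1,R}+y\hh_{2,R}\|^2\ge x^2+y^2$. There is no real obstacle; the only place where care is needed is to notice that the hypothesis $R\ge\sqrt{2}$ is used in two slightly different ways, namely to guarantee $\cos(2\alpha)\ge 0$ (for the lower bound of the cone estimate) and to guarantee the two-sided control $R^2-1\asymp R^2$ (for the equivalence $\|\hh_{i,R}^*\|\approx R$).
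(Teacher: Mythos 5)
Your proof is correct, and it is exactly the ``routine computation'' that the paper declines to spell out: the trigonometric parametrization $\hh_{2,R}=(\cos 2\alpha,\sin 2\alpha)$, direct verification of biorthogonality, the two-sided bound $R/2\le R^2/(2\sqrt{R^2-1})\le R/\sqrt{2}$ for $R\ge\sqrt 2$, and the expansion $\|x\hh_{1,R}+y\hh_{2,R}\|^2=x^2+y^2+2xy\cos 2\alpha$ with $\cos 2\alpha\in[0,1)$ giving both cone estimates. Nothing to add.
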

\begin{proof}
It is a routine computation.
%\Miguel{Me da 
%$$
%\norm{ \hh_{1,R}-\hh_{2,R}}=\sqrt{2}\sqrt{1-\sqrt{1-R^{-2}}}\approx R^{-1},
%$$
%que también alcanza para lo que buscamos. } 
\end{proof}

Let  $\XB=(\xx_n)_{n=1}^\infty$ be a sequence in a quasi-Banach space $\XX$. For each $n\in\NN$ we consider the linear map
\begin{equation}\label{eq:R2toX}
L_n\colon\FF^2\to \XX, \quad L_n(1,0)=\xx_{2n-1},\; L_n(0,1)=\xx_{2n}.
\end{equation}
Given a sequence $\eta=(\lambda_n,\mu_n)_{n=1}^\infty$  in $\RR_+^2$ with $\lambda_n\mu_n>1$ for all $n\in\NN$,
we define a sequence $\XB_{\,\eta}=(\yy_n)_{n=1}^\infty$ in $\XX$ by
\[
\yy_{2n-1}=\lambda_n L_n (\hh_{1,\lambda_n\mu_n}), \quad \yy_{2n}=\lambda_n L_n (\hh_{2,\lambda_n\mu_n}), \quad n\in\NN.
\]

\begin{lemma}\label{lem:Ln}
Let $\XB=(\xx_n)_{n=1}^\infty$ be a semi-normalized $M$-bounded  basis in a quasi-Banach space $\XX$. Then, the operators   $(L_n)_{n=1}^\infty$ defined as in \eqref{eq:R2toX} are uniform isomorphisms.
\end{lemma}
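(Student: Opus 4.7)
The plan is to obtain uniform two-sided estimates for $L_n$ by splitting the job into an upper bound (direct operator norm) and a lower bound (uniform boundedness of the inverse on the image).

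For the upper bound, fix the modulus of concavity $\kappa$ of the quasi-norm on $\XX$ and set $C_1=\sup_n \norm{\xx_n}<\infty$, which is finite because $\XB$ is semi-normalized. For any $(x,y)\in\FF^2$,
\[
\norm{L_n(x,y)}=\norm{x\,\xx_{2n-1}+y\,\xx_{2n}}\le \kappa C_1(\abs{x}+\abs{y}).
\]
Hence $\norm{L_n}\le \sqrt{2}\,\kappa\,C_1$ with respect to the Euclidean norm on $\FF^2$, independently of $n$.

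For the lower bound, use that $\XB$ is $M$-bounded and semi-normalized. Set
\[
K=\sup_n \norm{\xx_n}\norm{\xx_n^*}<\infty,\qquad c_1=\inf_n \norm{\xx_n}>0,
\]
so that $\sup_n \norm{\xx_n^*}\le K/c_1$. Given $(x,y)\in\FF^2$ and writing $f=L_n(x,y)=x\,\xx_{2n-1}+y\,\xx_{2n}$, biorthogonality yields $x=\xx_{2n-1}^*(f)$ and $y=\xx_{2n}^*(f)$. Therefore
\[
\max\{\abs{x},\abs{y}\}\le \frac{K}{c_1}\norm{f}=\frac{K}{c_1}\norm{L_n(x,y)},
\]
so $\norm{(x,y)}\le \sqrt{2}\,(K/c_1)\,\norm{L_n(x,y)}$. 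Combining the two estimates shows that $L_n$ is an isomorphism from $\FF^2$ onto $[\xx_{2n-1},\xx_{2n}]$ with isomorphism constants depending only on $\kappa$, $C_1$, $c_1$, and $K$, and not on $n$.

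No step is a real obstacle here; the only content is recognizing that semi-normalization plus $M$-boundedness forces $\sup_n \norm{\xx_n^*}<\infty$, which is what controls $L_n^{-1}$. The argument does not even require $\XX$ to be locally convex, so it applies in the general quasi-Banach setting as stated.
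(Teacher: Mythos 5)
Your proof is correct and takes essentially the same approach as the paper: an upper bound from $\sup_n\norm{\xx_n}<\infty$ and the modulus of concavity, and a lower bound from biorthogonality together with $\sup_n\norm{\xx_n^*}<\infty$. The only difference is that you spell out why $\sup_n\norm{\xx_n^*}<\infty$ follows from semi-normalization and $M$-boundedness, which the paper leaves implicit.
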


\begin{proof}
Let $\kappa$ be the modulus of concavity of $\XX$. Let $c=\sup_n \norm{ \xx_n}$ and $d=\sup_n \norm{\xx_n^*}$. We have
\[
\frac{1}{d} \max\{\abs{x},\abs{y}\} \le \norm{ L_n(x,y)} \le \kappa c (\abs{x}+\abs{y}) \quad x,\,y\in\FF.\qedhere.
\]
\end{proof}

\begin{lemma}\label{lem:GB}
Let $\eta=(\lambda_n,\mu_n)_{n=1}^\infty$ be a sequence in  $\RR_+^2$ with $\lambda_n\mu_n>1$ for all $n\in\NN$. Let $\XB=(\xx_n)_{n=1}^\infty$ be an $M$-bounded semi-normalized basis in a quasi-Banach space $\XX$. Then $\XB_{\,\eta}=(\yy_n)_{n=1}^\infty$ is a basis of $\XX$ such that, if $\XB_{\,\eta}^*=(\yy_n^*)_{n=1}^\infty$ denotes its dual basis,
\begin{align*}
\norm{ \yy_{2n-1}^*}
&\approx \norm{ \yy_{2n}^*}\approx \mu_n,\\
 \norm{\yy_{2n}- \yy_{2n-1}} &\approx \frac{1}{\mu_n},\; \text{and}\\
\norm{ a\yy_{2n-1}+ b \yy_{2n}}&\approx\lambda_n(a+b)
\end{align*}
for $n\in\NN$ and $a$, $b\ge 0$. Moreover,
\begin{enumerate}[label={(\roman*)}, leftmargin=*, widest=ii]
\item\label{lem:GB:1} if $\XB$ is total, so is $\XB_{\,\eta}$, and
\item\label{lem:GB:2} if $\XB$ is equivalent to another basis $\XB'$, then $\XB_{\,\eta}$ is equivalent to $\XB'_{\,\eta}$.
%\item If $\XB=\UU\oplus\UU$ for some semi unconditional and $\lambda_n=1$ for all $n\in\NN$, then $\XB$ and $\XB_{\,\eta}$ are equivalent for non-negative coefficients.
\end{enumerate}
\end{lemma}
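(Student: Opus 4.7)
The construction is intrinsically blockwise: for each $n$ the pair $(\yy_{2n-1},\yy_{2n})$ sits inside the two-dimensional subspace $E_n := [\xx_{2n-1},\xx_{2n}]$, being exactly $\lambda_n\,L_n(\hh_{i,R_n})$ with $R_n := \lambda_n\mu_n$. My plan is to transport the two-dimensional estimates of Lemma~\ref{lem:R2Basis} through $L_n$, which is a \emph{uniform} isomorphism onto $E_n$ by Lemma~\ref{lem:Ln}, and then use the semi-normalization and $M$-boundedness of $\XB$ to pass from blockwise identities to estimates on the whole space.

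Since $\{\hh_{1,R_n},\hh_{2,R_n}\}$ is a basis of $\FF^2$, its image under $L_n$, rescaled by $\lambda_n$, is a basis of $E_n$, so $\XB_{\,\eta}$ spans the same subspaces block by block as $\XB$, giving $[\XB_{\,\eta}]=\XX$. The biorthogonal functionals are defined blockwise: on $E_n$ set $\yy_i^* := (1/\lambda_n)\,\hh_{i,R_n}^*\circ L_n^{-1}$ and extend by zero to the remaining blocks through the decomposition provided by the (norm-bounded) functionals $\{\xx_{2k-1}^*,\xx_{2k}^*\}_{k\ne n}$; the $M$-boundedness of $\XB$ ensures that each $\yy_i^*$ is in $\XX^*$. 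The three norm identities are now routine consequences of Lemma~\ref{lem:R2Basis} after applying $L_n$: the estimate $\|\yy_{2n}-\yy_{2n-1}\|\approx \lambda_n\|\hh_{2,R_n}-\hh_{1,R_n}\|=2/\mu_n$ is immediate; for $a,b\ge 0$ the sandwich $\sqrt{a^2+b^2}\le \|a\hh_{1,R_n}+b\hh_{2,R_n}\|\le a+b$ yields $\|a\yy_{2n-1}+b\yy_{2n}\|\approx \lambda_n(a+b)$; and for $\|\yy_{2n-1}^*\|\approx \mu_n$ the upper bound combines the explicit formula for $\hh_{1,R_n}^*$, the estimate $\|\hh_{1,R_n}^*\|\approx R_n$, and the uniform bound $\sup_n\|\xx_n^*\|<\infty$ furnished by $M$-boundedness, while the lower bound comes from restricting to $E_n$ and invoking the uniform isomorphism $L_n^{-1}$:
\[
\|\yy_{2n-1}^*\|_{\XX^*}\ \ge\ \bigl\|\yy_{2n-1}^*|_{E_n}\bigr\|\ \approx\ (1/\lambda_n)\|\hh_{1,R_n}^*\|_{(\FF^2)^*}\ \approx\ \mu_n.
\]

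For \ref{lem:GB:1}, if $\yy_n^*(f)=0$ for every $n$, then on each block $E_n$ the vector $f$ is annihilated by the functionals corresponding under $L_n$ to $\{\hh_{1,R_n}^*,\hh_{2,R_n}^*\}$; since the latter is a basis of $(\FF^2)^*$, pulling back yields $\xx_{2n-1}^*(f)=\xx_{2n}^*(f)=0$ for all $n$, so $f=0$ by totality of $\XB$. For \ref{lem:GB:2}, any isomorphism $T\colon\XX\to\XX'$ with $T(\xx_n)=\xx_n'$ intertwines $L_n$ with its analogue $L_n'$ built from $\XB'$, hence $T(\yy_n)=\yy_n'$ for all $n$ and the equivalence follows. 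The only delicate point in the whole argument is the upper bound on $\|\yy_{2n-1}^*\|$: a careless extension from $E_n$ to $\XX^*$ could produce a norm depending uncontrollably on the other blocks, and it is precisely the $M$-boundedness hypothesis on $\XB$ that prevents this.
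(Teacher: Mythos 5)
Your proof is correct and follows essentially the same route as the paper: both transport the two-dimensional estimates of Lemma~\ref{lem:R2Basis} blockwise through the uniform isomorphisms $L_n$ of Lemma~\ref{lem:Ln}, identify the dual functionals $\yy_{2n-1}^*,\yy_{2n}^*$ as the rescaled images of $\hh_{1,R_n}^*,\hh_{2,R_n}^*$ under the corresponding operators built from $\XB^*$, and settle \ref{lem:GB:1} and \ref{lem:GB:2} by noting that $\XB^*$ is a linear combination of $\XB_{\,\eta}^*$ and that the coefficients defining $\XB_{\,\eta}$ do not depend on the choice of $\XB$. The only cosmetic difference is that the paper introduces $L_n^*$ and uses the pairing identity $L_n^*(g)(L_n(f))=\langle g,f\rangle$ to get biorthogonality and both norm estimates at once, whereas you establish the upper and lower bounds for $\norm{\yy_{2n-1}^*}$ by hand.
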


\begin{proof}
%Suppose that $\XX$ is a $p$-Banach space, $0<p\le 1$.
%Let $c=\sup_n \Vert \xx_n\Vert$ and $d=\sup_n \Vert \xx_n^*\Vert$. If  $L_n$ is as in \eqref{eq:R2toX} we have
%\[
%\frac{1}{d} \max\{x,y\} \le \Vert L_n(x,y)\Vert \le c (y^p+y^p)^{1/p}, \quad x,y\in\FF.
%\]
%Hence $(L_n)_{n=1}^\infty$ are uniform isomorphisms. Similarly, if $L_n^*\colon\FF^2\to \XX^*$ are defined as in \eqref{eq:R2toX} with respect to the sequence $\XB^*$,  $(L_n^*)_{n=1}^\infty$ are uniform isomorphisms. 
Let $(L_n)_{n=1}^\infty$ and  $(L_n^*)_{n=1}^\infty$ be as in \eqref{eq:R2toX} with respect to $\XB$ and $\XB^*$, respectively. Since $L_n^*(g)(L_n(f))=\langle g, f\rangle$ for all $f$, $g\in\FF^2$ and $n\in\NN$, $\XB_{\,\eta}$ is a basis of $\XX$ whose biorthogonal functionals $(\yy_n^*)_{n=1}^\infty$ are given by
\[
\yy_{2n-1}^*=\frac{1}{\lambda_n} L_n (\hh_{1,\lambda_n\mu_n}^*), \quad \yy_{2n}^*=\frac{1}{\lambda_n} L_n (\hh_{2,\lambda_n\mu_n}^*), \quad n\in\NN.
\]
Combining Lemmas~\ref{lem:R2Basis}  and \ref{lem:Ln} yields the desired estimates for $\XB_{\,\eta}$ and its dual basis. Since the vectors of $\XB^*$ are linear combinations of $(\yy_n^*)_{n=1}^\infty$, \ref{lem:GB:1} holds. In turn, since the vectors is $\XB_{\,\eta}$ are linear combinations of $\XB$ with coefficients that do not depend of the given basis $\XB$, \ref{lem:GB:2} holds.
\end{proof}

\begin{lemma}\label{lem:EqPositive}
Let $\UB=(\uu_n)_{n=1}^\infty$ be a semi-normalized unconditional basis of a quasi-Banach space $\UU$. Let $(\mu_n)_{n=1}^\infty$ be a sequence in $(1,\infty)$.  Set $\eta=(1,\mu_n)_{n=1}^\infty$. Then $(\UB^2)_{\eta}$ is equivalent to $\UB^2$ for non-negative scalars.
\end{lemma}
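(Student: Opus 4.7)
The plan is to write everything in the basis $\UB^2=(\zz_n)_{n=1}^\infty$ with $\zz_{2n-1}=(\uu_n,0)$ and $\zz_{2n}=(0,\uu_n)$. Since $\lambda_n=1$, unpacking the definition of $\XB_{\,\eta}$ via Lemma~\ref{lem:R2Basis} and the map $L_n$ from \eqref{eq:R2toX} gives $\yy_{2n-1}=\zz_{2n-1}$ and $\yy_{2n}=\cos(2\alpha_n)\,\zz_{2n-1}+\sin(2\alpha_n)\,\zz_{2n}$, where $\sin\alpha_n=1/\mu_n$. Therefore, for non-negative scalars $(a_n),(b_n)\in c_{00}$,
\[
\sum_n(a_n\yy_{2n-1}+b_n\yy_{2n})=\sum_n(a_n+b_n\cos(2\alpha_n))\,\zz_{2n-1}+\sum_n b_n\sin(2\alpha_n)\,\zz_{2n},
\]
so that the whole problem reduces to a comparison of two expansions in the unconditional basis $\UB^2$.

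For the upper bound $\|\cdot\|_{(\UB^2)_{\eta}}\lesssim\|\cdot\|_{\UB^2}$ I would use the pointwise inequalities $|a_n+b_n\cos(2\alpha_n)|\le a_n+b_n$ and $0\le b_n\sin(2\alpha_n)\le b_n$ together with the unconditionality of $\UB^2$ (inherited from $\UB$) to majorize the right-hand side by $\|\sum_n(a_n+b_n)\,\zz_{2n-1}+\sum_n b_n\,\zz_{2n}\|$; the $\max$-structure of $\UU^2$ combined with the triangle inequality then controls this by a multiple of $\max\{\|\sum a_n\uu_n\|_\UU,\|\sum b_n\uu_n\|_\UU\}=\|\cdot\|_{\UB^2}$.

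The lower bound is the heart of the proof, and the main ingredient is the elementary identity
\[
\cos(2\alpha_n)+\tan(\alpha_n)\,\sin(2\alpha_n)=1,
\]
immediate from $\cos(2\alpha)=1-2\sin^2\alpha$ and $\sin(2\alpha)=2\sin\alpha\cos\alpha$. Multiplying by $b_n$ and adding $a_n$ gives
\[
a_n+b_n=(a_n+b_n\cos(2\alpha_n))+\tan(\alpha_n)\cdot b_n\sin(2\alpha_n).
\]
Provided $\tan(\alpha_n)=(\mu_n^2-1)^{-1/2}$ stays bounded (this is the step at which the hypothesis on $(\mu_n)$ really bites), unconditionality of $\UB$ and the triangle inequality yield $\|\sum_n(a_n+b_n)\uu_n\|_\UU\lesssim\|\cdot\|_{(\UB^2)_{\eta}}$. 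A second application of unconditionality, in the form of monotonicity on non-negative scalars ($a_n,b_n\le a_n+b_n$), then upgrades this to $\max\{\|\sum a_n\uu_n\|,\|\sum b_n\uu_n\|\}\lesssim\|\cdot\|_{(\UB^2)_{\eta}}$, closing the argument.

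The principal obstacle is that neither coordinate of the deformed expansion individually controls the $\UB^2$-norm: the first loses its positivity when $\cos(2\alpha_n)<0$, while the second becomes negligible for large $\mu_n$. The identity $\cos+\tan\cdot\sin=1$ is precisely the combination that bypasses both difficulties simultaneously by reconstructing $a_n+b_n$ from the two coordinates, after which the non-negativity of the scalars allows the cheap passage from control of $\sum(a_n+b_n)\uu_n$ to separate control of $\sum a_n\uu_n$ and $\sum b_n\uu_n$.
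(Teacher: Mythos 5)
Your proof is correct, and it follows a route that is parallel to but technically distinct from the paper's. The paper first proves the structural fact
\[
\norm{f}_{\UU^2}\approx\Bigl\|\sum_{n=1}^\infty\norm{R_n(f)}\,\uu_n\Bigr\|_\UU,
\]
where $R_n(f)=\xx_{2n-1}^*(f)\xx_{2n-1}+\xx_{2n}^*(f)\xx_{2n}$ is the $n$th block projection, a quantity that is the same whether one expands $f$ in $\UB^2$ or in $(\UB^2)_\eta$; the equivalence then follows once the two-sided estimate $\sqrt{x^2+y^2}\le\norm{x\,\hh_{1,R}+y\,\hh_{2,R}}\le x+y$ of Lemma~\ref{lem:R2Basis} (valid for $x,y\ge 0$) identifies both norms with $\norm{\sum_n(a_{2n-1}+a_{2n})\uu_n}$. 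You instead expand $(\UB^2)_\eta$ explicitly in $\UB^2$ and recover $a_n+b_n$ via the identity $\cos(2\alpha)+\tan\alpha\cdot\sin(2\alpha)=1$, thereby sidestepping the sandwich estimate entirely. This is a valid and more hands-on alternative; what the paper's formulation buys is that the per-block comparison is carried by a single lemma already in place, while your version exposes more clearly which cancellation is being exploited.

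One caveat, which you sense but should resolve cleanly: the hypothesis ``$(\mu_n)_{n=1}^\infty$ a sequence in $(1,\infty)$'' does \emph{not} by itself bound $\tan\alpha_n=(\mu_n^2-1)^{-1/2}$, which blows up as $\mu_n\to 1^+$. In fact the vectors $\hh_{1,R},\hh_{2,R}$ are only defined in Lemma~\ref{lem:R2Basis} for $R\ge\sqrt 2$, so the lemma should really be read with $\mu_n\ge\sqrt 2$ (and then $\tan\alpha_n\le 1$, giving a uniform constant). The paper's own proof carries the same implicit restriction, since it invokes Lemma~\ref{lem:R2Basis}, so this is not a defect of your argument alone — but it should be stated rather than left as ``provided $\tan\alpha_n$ stays bounded.''
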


\begin{proof}
Denote $\UB^2=(\xx_n)_{n=1}^\infty$ and $(\UB^2)_{\eta}=(\yy_n)_{n=1}^\infty$. Notice that 
\[
R_n(f):=\xx_{2n-1}^*(f)\xx_{2n-1}+ \xx_{2n}^*(f)\xx_{2n}=\yy_{2n-1}^*(f)\yy_{2n-1}+ \yy_{2n}^*(f)\yy_{2n}
\]
for all $n\in\NN$ and $f\in \UU^2$. For $f=\sum_{n=1}^\infty b_n\, \xx_n\in\UU^2$ we have
\begin{align*}
\norm{ f }
&=\max\left\{ \norm{ \sum_{n=1}^\infty b_{2n-1}\, \uu_n},\norm{ \sum_{n=1}^\infty b_{2n}\, \uu_n}\right\}\\
&\approx \norm{ \sum_{n=1}^\infty (\abs{b_{2n-1}}+\abs{b_{2n}})\, \uu_n}\\
&\approx \norm{ \sum_{n=1}^\infty  \norm{ b_{2n-1} \, \xx_{2n-1}+ b_{2n} \, \xx_{2n} }  \uu_n}\\
&= \norm{ \sum_{n=1}^\infty  \norm{R_n(f)} \, \uu_n }.
\end{align*}

Let  $(L_n)_{n=1}^\infty$ be as in \eqref{eq:R2toX} with respect to be basis $\UB^2$. Pick an eventually null sequence of scalars $(a_n)_{n=1}^\infty$, and set  $f=\sum_{n=1}^\infty a_n\, \xx_n$ and $g=\sum_{n=1}^\infty a_n\, \yy_n$. Taking into account  Lemma~\ref{lem:Ln} we obtain
\begin{align*}
\norm{ g }
&\approx\norm{ \sum_{n=1}^\infty  \norm{ R_n(g) }  \uu_n}\\
&= \norm{\sum_{n=1}^\infty  \norm{  \yy_{2n-1}^*(g)\yy_{2n-1}+ \yy_{2n}^*(g)\yy_{2n}  } \uu_n}\\
&= \norm{ \sum_{n=1}^\infty  \norm{  L_n(a_{2n-1} \hh_{1,\mu_n} + a_{2n} \hh_{1,\mu_n}) }  \uu_n}\\
&\approx \norm{\sum_{n=1}^\infty  \norm{ a_{2n-1} \hh_{1,\mu_n} + a_{2n} \hh_{2,\mu_n} }  \uu_n}.\\
%&\approx \left\Vert \sum_{n=1}^\infty  (a_{2n-1} + a_{2n})   \uu_n\right\Vert.
%&\approx \Vert f\Vert.\qedhere
\end{align*}
Therefore, if $a_n\ge 0$ for all $n\in\NN$, applying Lemma~\ref{lem:R2Basis}  yields 
\[\norm{ g} \approx  \norm{ \sum_{n=1}^\infty  (a_{2n-1} + a_{2n})   \uu_n}
\approx \norm{ f}.\qedhere
\]
\end{proof}

\section{Squeeze-symmetric bases with large unconditonality parameters}\label{sect:LUC}
\noindent
In this section, we apply the DKK method, as developed in Section~\ref{sect:DKK}, to the bases we have constructed in Section~\ref{sect:NMB}.

\begin{lemma}\label{lem:DKKW}
Let $\Sym$ be a subsymmetric sequence space, let $\sigma$ be an ordered partition of $\NN$ with $s_n:=\abs{\sigma_{2n-1}}=\abs{\sigma_{2n}}\ge 2$ for all $n\in\NN$, let $\XB$ be a semi-normalized $M$-bounded basis of a quasi-Banach space $\XX$.
%, and let $(\lambda_n)_{n=1}^\infty$ a sequence with $1/\Lambda_{s_n}<\lambda_n<s_n/\Lambda_{s_n}$ for all $n\in\NN$. 
Set 
\[
\eta=\left( \frac{s_n}{\Lambda_{s_n}} ,\Lambda_{s_n}\right)_{n=1}^\infty,
\]
\[
R_n(a,b):=\norm{ a \Ind_{\sigma_{2n-1}} + b \Ind_{\sigma_{2n}}}_{\XB_\eta,\Sym,\sigma}, \quad a,b\in\FF,\;n\in\NN,
\]
and $M_n=\sum_{k=1}^{2n} \abs{\sigma_n}$ for all $n\in\NN$. Then, 
\begin{enumerate}[label=(\roman*), leftmargin=*,widest=iii]
\item\label{DKKW:1} the unit vector system is a  semi-normalized $M$-bounded basis of 
$
\YY:=\YY[\XB_{\,\eta},\Sym,\sigma]
$,
\item\label{DKKW:2} $R_n(a,b)\approx(a+b)s_n$ for $a$, $b\ge 0$  and $n\in\NN$,
\item\label{DKKW:3}  $R_n(-1,1)\approx 1$ for $n\in\NN$,
\item\label{DKKW:4} $\kl_{M_{n}}[\EB,\YY]\gtrsim s_n$ for $n\in\NN$.
\end{enumerate} 
%\begin{align*}
%A_n&:=\left\Vert \Ind_{\sigma_{2n-1}}- \Ind_{\sigma_{2n}}\right\Vert_{\XB_{\,\eta},\Sym,\sigma}\approx 1,\\
%B_n(a,b)&:=\left\Vert a \Ind_{\sigma_{2n-1}} + b \Ind_{\sigma_{2n}}\right\Vert_{\XB_{\,\eta},\Sym,\sigma}\approx
%\quad a,b\ge0,\; n\in\NN.
%\end{align*}
\end{lemma}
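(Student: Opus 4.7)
The plan is to feed the perturbed basis $\XB_{\,\eta}$ from Section~\ref{sect:NMB} into the extended DKK construction of Section~\ref{sect:DKK}, with the parameters $\lambda_n=s_n/\Lambda_{s_n}$ and $\mu_n=\Lambda_{s_n}$ chosen precisely so that the norm equivalences of Lemma~\ref{lem:GB} collapse to convenient powers of $s_n$ and $\Lambda_{s_n}$. First I would verify the basic hypotheses: by Lemma~\ref{lem:GB}, $\XB_{\,\eta}$ is a basis of $\XX$ (its vectors span the same two-dimensional blocks as the consecutive pairs of $\XB$, so $[\XB_{\,\eta}]=[\XB]=\XX$), and that same lemma yields the two-sided estimates $\norm{\yy_{2n-1}},\norm{\yy_{2n}}\approx\lambda_n=\aabs{\sigma_{2n-1}}/\Lambda_{\aabs{\sigma_{2n-1}}}$ together with $\norm{\yy_{2n-1}^*},\norm{\yy_{2n}^*}\approx\mu_n=\Lambda_{\aabs{\sigma_{2n-1}}}$. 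These are precisely the bounds required by Proposition~\ref{prop:basisY}\ref{prop:basisY:4}, which directly delivers~\ref{DKKW:1}.

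For items \ref{DKKW:2} and \ref{DKKW:3}, the key observation is that $f:=a\Ind_{\sigma_{2n-1}}+b\Ind_{\sigma_{2n}}$ is constant on every block of $\sigma$, so $Q_\sigma(f)=0$ and $P_\sigma(f)=f=a\Lambda_{s_n}\vv_{2n-1}+b\Lambda_{s_n}\vv_{2n}$. Plugging this into the definition of $\norm{\cdot}_{\XB_{\,\eta},\Sym,\sigma}$ reduces $R_n(a,b)$ to $\norm{a\Lambda_{s_n}\yy_{2n-1}+b\Lambda_{s_n}\yy_{2n}}_{\XX}$. For $a,b\ge 0$, the positive-scalar estimate of Lemma~\ref{lem:GB} evaluates this quantity at $\Lambda_{s_n}\lambda_n(a+b)=s_n(a+b)$, giving \ref{DKKW:2}. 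Taking $(a,b)=(-1,1)$ instead, the same lemma supplies $\norm{\yy_{2n}-\yy_{2n-1}}\approx 1/\mu_n$, so $R_n(-1,1)\approx\Lambda_{s_n}/\Lambda_{s_n}=1$, proving \ref{DKKW:3}.

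For \ref{DKKW:4}, I would use \ref{DKKW:2} and \ref{DKKW:3} to exhibit the required projection directly rather than invoking Proposition~\ref{prop:basisY}\ref{prop:basisY:8}. Set $g:=\Ind_{\sigma_{2n}}[\EB,\YY]-\Ind_{\sigma_{2n-1}}[\EB,\YY]$, which lies in $[\ee_j\colon 1\le j\le M_n]$ since $\sigma_{2n-1}\cup\sigma_{2n}\subseteq\{1,\dots,M_n\}$, and which satisfies $\norm{g}_{\YY}=R_n(-1,1)\approx 1$ by \ref{DKKW:3}. Taking $A=\sigma_{2n-1}$, the coordinate projection is $S_A[\EB,\YY](g)=-\Ind_{\sigma_{2n-1}}[\EB,\YY]$, whose norm equals $R_n(1,0)\approx s_n$ by \ref{DKKW:2}. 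Dividing these two estimates yields $\kl_{M_n}[\EB,\YY]\gtrsim s_n$, as desired.

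None of the steps demand genuinely new work beyond careful bookkeeping: everything reduces to the identity $\Lambda_{s_n}\lambda_n=s_n$ combined with the pre-existing estimates. The only mildly delicate point, and in that sense the main thing to watch, is that Proposition~\ref{prop:basisY} must be applied with the perturbed basis $\XB_{\,\eta}$ (not the original $\XB$) in the role of ``$\XB$'', and one has to confirm that the choice $\lambda_n=s_n/\Lambda_{s_n}$, $\mu_n=\Lambda_{s_n}$ makes the $M$-boundedness estimates in \ref{prop:basisY:4} actually hold, which in turn relies on the conclusion of Lemma~\ref{lem:GB} that both $\norm{\yy_n}$ and $\norm{\yy_n^*}$ behave as two-sided equivalences, not just upper bounds.
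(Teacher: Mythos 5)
Your proposal is correct and follows essentially the same route as the paper: the authors likewise observe that $R_n(a,b)=\Lambda_{s_n}\norm{a\yy_{2n-1}+b\yy_{2n}}$ (your $Q_\sigma(f)=0$ computation), invoke Lemma~\ref{lem:GB} with $\lambda_n=s_n/\Lambda_{s_n}$, $\mu_n=\Lambda_{s_n}$ for \ref{DKKW:1}--\ref{DKKW:3} via Proposition~\ref{prop:basisY}\ref{prop:basisY:4}, and deduce \ref{DKKW:4} from \ref{DKKW:2} and \ref{DKKW:3}. Your proof is somewhat more explicit, in particular in spelling out the witness $g=\Ind_{\sigma_{2n}}-\Ind_{\sigma_{2n-1}}$ for \ref{DKKW:4}, which the paper leaves implicit.
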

\begin{proof}
Denote $\XB_{\,\eta}=(\yy_n)_{n=1}^\infty$. For all $a$, $b\in\FF$ we have
\[
R_n(a,b)=\norm{ a \Lambda_{\aabs{\sigma_{2n-1}}} \yy_{2n-1} +b \Lambda_{\aabs{\sigma_{2n}}} \yy_{2n}}
=\Lambda_{s_n} \norm{  a \yy_{2n-1} + b \yy_{2n}}.
\]
In light of this identity,  we obtain $\ref{DKKW:1}$,  $\ref{DKKW:2}$ and  $\ref{DKKW:3}$ by combining Lemma~\ref{lem:GB} with Proposition~\ref{prop:basisY}. Combining  $\ref{DKKW:2}$ and  $\ref{DKKW:3}$ gives $\ref{DKKW:4}$.
% \Miguel{\\ Para que  \ref{DKKW:2} y  \ref{DKKW:3} den \ref{DKKW:4}, creo que habría que  agregar que $\frac{s_n}{2\Lambda_{s_n}}<\lambda_n$, o alguna condición similar. Hasta donde lo veo, como está obtenemos  $\kl_{M_{n}}[\EB,\YY]\gtrsim \lambda_n\Lambda_{s_n}$, pero esto no nos dice que es $\gtrsim s_m$. }
\end{proof}

\begin{theorem}\label{thm:A}
Let $\XX$ be a quasi-Banach space with an $M$-bounded basis $\XB$. Suppose that a locally convex symmetric sequence space $\Sym$ is complemented in $\XX$. Then $\XX$ has a $M$-bounded basis $\BB$ with 
\[
\kl_m[\BB,\XX]\gtrsim m, \quad m\in\NN.
\]
Moreover, if $\XB$ is total, the basis $\YB$ is total.
\end{theorem}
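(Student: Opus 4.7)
The strategy is to build an auxiliary basis via the extended DKK construction of Section~\ref{sect:DKK} applied to the perturbed basis $\XB_{\,\eta}$ from Section~\ref{sect:NMB}, and then transport it to $\XX$ through a Pelczynski-style isomorphism. After renormalizing $\XB$ to $(\xx_n/\norm{\xx_n})_{n=1}^\infty$ if necessary, we may assume $\XB$ is semi-normalized. Choose an ordered partition $\sigma=(\sigma_n)_{n=1}^\infty$ of $\NN$ with $\abs{\sigma_{2n-1}}=\abs{\sigma_{2n}}=s_n:=2^n$; then $M_n:=\sum_{k=1}^{2n}\abs{\sigma_k}\approx s_n$ and $\sup_n M_{n+1}/M_n<\infty$.

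Apply Lemma~\ref{lem:DKKW} with $\XB$, $\Sym$ and $\sigma$: the unit vector system $\EB$ becomes a semi-normalized $M$-bounded basis of $\YY:=\YY[\XB_{\,\eta},\Sym,\sigma]$ with $\kl_{M_n}[\EB,\YY]\gtrsim s_n\approx M_n$. Since $(\kl_m)_{m=1}^\infty$ is non-decreasing and the $M_n$ grow geometrically, we deduce $\kl_m[\EB,\YY]\gtrsim m$ for every $m\in\NN$. If moreover $\XB$ is total, then $\XB_{\,\eta}$ is total by Lemma~\ref{lem:GB}\ref{lem:GB:1}, whence $\EB$ is total in $\YY$ by Proposition~\ref{prop:basisY}\ref{prop:basisY:1}.

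It remains to exhibit an isomorphism $T\colon\YY\to\XX$: the transported basis $\BB:=(T\ee_n)_{n=1}^\infty$ of $\XX$ will automatically inherit $M$-boundedness, totality, and the estimate $\kl_m[\BB,\XX]\gtrsim m$ up to multiplicative constants. By Lemma~\ref{lem:isomorphism}, $\YY\simeq Q_\sigma(\Sym)\oplus\XX$, so the task reduces to showing $Q_\sigma(\Sym)\oplus\XX\simeq\XX$. Write $\XX\simeq\Sym\oplus\WW$ via the complementability hypothesis. Since $\Sym$ is symmetric, $\Sym\simeq\Sym\oplus\Sym$ (split $\NN$ into evens and odds), and $P_\sigma(\Sym)$ is complemented in $\Sym$ via the bounded projection $P_\sigma$. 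A standard Pelczynski decomposition argument (the trivial embedding $\Sym\hookrightarrow P_\sigma(\Sym)\oplus\Sym$ is complemented, while the opposite complemented embedding is obtained by placing $P_\sigma(\Sym)$ inside the first summand of $\Sym\simeq\Sym\oplus\Sym$ and using $\Sym\simeq\Sym\oplus\Sym$ for idempotence) yields $P_\sigma(\Sym)\oplus\Sym\simeq\Sym$. Combining this with $\Sym=P_\sigma(\Sym)\oplus Q_\sigma(\Sym)$ and $\Sym\simeq\Sym\oplus\Sym$,
\[
Q_\sigma(\Sym)\oplus\Sym\simeq Q_\sigma(\Sym)\oplus P_\sigma(\Sym)\oplus\Sym\simeq\Sym\oplus\Sym\simeq\Sym,
\]
whence $Q_\sigma(\Sym)\oplus\XX\simeq Q_\sigma(\Sym)\oplus\Sym\oplus\WW\simeq\Sym\oplus\WW\simeq\XX$.

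The main technical hurdle is precisely the Pelczynski-style identification $Q_\sigma(\Sym)\oplus\XX\simeq\XX$; once that is in place, everything else is a direct consequence of the structural lemmas of Sections~\ref{sect:DKK} and~\ref{sect:NMB}, together with the fact that $M$-boundedness, totality, and $\kl_m$-parameters are preserved (up to multiplicative constants) by isomorphisms.
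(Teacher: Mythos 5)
Your proof is correct, and it takes a genuinely slightly different route from the paper's. The paper side-steps the need for a separate Pe\l czy\'nski argument for $Q_\sigma(\Sym)$ by applying Lemma~\ref{lem:DKKW} not to a basis of $\XX$ itself, but to a semi-normalized $M$-bounded basis of the padded space $\XX_0:=P_\sigma(\Sym)\oplus\XX$; then Lemma~\ref{lem:isomorphism} immediately gives
\[
\YY\simeq Q_\sigma(\Sym)\oplus\XX_0=Q_\sigma(\Sym)\oplus P_\sigma(\Sym)\oplus\XX\simeq\Sym\oplus\XX\simeq\XX,
\]
where the last two isomorphisms use only the internal decomposition $\Sym=Q_\sigma(\Sym)\oplus P_\sigma(\Sym)$ and the one Pe\l czy\'nski-type absorption $\Sym\oplus\XX\simeq\XX$ coming from $\Sym\simeq\Sym\oplus\Sym$ and complementability of $\Sym$ in $\XX$. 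You instead apply the DKK construction directly to (a normalization of) $\XB$, so Lemma~\ref{lem:isomorphism} gives $\YY\simeq Q_\sigma(\Sym)\oplus\XX$, and you then need an extra decomposition argument to establish $Q_\sigma(\Sym)\oplus\Sym\simeq\Sym$ before folding $Q_\sigma(\Sym)$ into $\XX$. Both routes are sound and invoke the same structural inputs ($\Sym\simeq\Sym\oplus\Sym$, complementability, Lemmas~\ref{lem:isomorphism} and~\ref{lem:DKKW}); the paper's version is marginally slicker because $Q_\sigma(\Sym)$ and $P_\sigma(\Sym)$ reassemble into $\Sym$ automatically, while yours is a bit more explicit about why each isomorphism exists. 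Your preliminary normalization of $\XB$ (which preserves $M$-boundedness, totality and the $\kl_m$ parameters, as these are affinity-invariant), the passage from the geometric sequence of indices $M_n$ to the estimate $\kl_m\gtrsim m$ for all $m$, and the totality argument via Lemma~\ref{lem:GB}\ref{lem:GB:1} and Proposition~\ref{prop:basisY}\ref{prop:basisY:1} are all handled correctly.
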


\begin{proof}
Consider the ordered partition of $\NN$ given by $\abs{\sigma_{2n-1}}=\abs{\sigma_{2n}}=2^{n}$ for all $n\in\NN$. The space
$P_\sigma(\Sym)\oplus\XX$ has a $M$-bounded semi-normalized basis. Applying Lemma~\ref{lem:DKKW}  to this basis yields a basis $\YB$ of
\[
\YY:=\YY[\XB_{\,\eta}, \Sym,\sigma] \approx Q_\sigma(\Sym)\oplus P_\sigma(\Sym)\oplus\XX\simeq \Sym\oplus\XX\simeq\XX
\]
with
\[
\kl_{2^{n+2}-1}[\YB,\YY]\gtrsim 2^n, \quad n\in\NN.
\]
From here, the desired estimate follows in a routine way.
\end{proof}

To contextualize the next result we point out that any Schauder basis $\XB$ of any superreflexive Banach space $\XX$ satisfies the estimate
\[
\kk_m[\XB,\XX] \lesssim m^{a}, \quad m\in\NN,
\]
for some $a<1$ (see \cite{AAW2019}).
\begin{corollary}
Let $\XX$ be a separable Banach space. Suppose that a symmetric sequence space $\Sym$ is complemented in $\XX$. Then $\XX$ has an $M$-bounded total basis $\BB$ with 
\[
\kl_m[\BB,\XX]\approx m, \quad m\in\NN.
\]
\end{corollary}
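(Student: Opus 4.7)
The plan is to combine Theorem~\ref{thm:A} with the trivial linear upper bound that is built into the notion of $M$-boundedness in Banach spaces. The lower bound $\kl_m[\BB,\XX]\gtrsim m$ is exactly what Theorem~\ref{thm:A} delivers; the matching upper bound will come essentially for free from \eqref{eq:BanachEstimate}.

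First I would invoke Theorem~\ref{thm:ExistsMB} to select an $M$-bounded basis $\XB$ of the separable Banach space $\XX$, and, using the standard strengthening of Markushevich's theorem to bounded $M$-bases, arrange that $\XB$ be also total. With that done, the hypotheses of Theorem~\ref{thm:A} are satisfied: $\XX$ is a quasi-Banach space equipped with an $M$-bounded basis, and by assumption a locally convex symmetric sequence space $\Sym$ is complemented in $\XX$. Applying Theorem~\ref{thm:A} produces an $M$-bounded basis $\BB$ of $\XX$ which is total (thanks to the \emph{moreover} clause of that theorem, the totality of the seed basis $\XB$ being transferred to $\BB$) and which satisfies
\[
\kl_m[\BB,\XX]\gtrsim m, \quad m\in\NN.
\]

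For the reverse estimate I would simply observe that any $M$-bounded basis of a Banach space obeys the universal bound \eqref{eq:BanachEstimate}, namely $\kk_m[\BB,\XX]\lesssim m$ for $m\in\NN$. Combined with the tautological inequality $\kl_m[\BB,\XX]\le \kk_m[\BB,\XX]$ recorded right after the definition of the parameters $\kl_m$, this yields $\kl_m[\BB,\XX]\lesssim m$, and merging with the lower bound furnishes $\kl_m[\BB,\XX]\approx m$.

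There is no serious obstacle here: the genuine engineering has already been carried out in Section~\ref{sect:LUC} to build bases with $\kl_m$ as large as $m$, and the corollary merely records that, inside a Banach space, $M$-boundedness prevents $\kl_m$ from growing faster than linearly. The only minor point requiring attention is ensuring that the seed basis supplied by Theorem~\ref{thm:ExistsMB} can be taken total, which is automatic from the classical existence of bounded Markushevich bases in separable Banach spaces.
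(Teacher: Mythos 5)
Your proposal is correct and follows essentially the same route as the paper: invoke Theorem~\ref{thm:ExistsMB} for an $M$-bounded (Markushevich, hence total) seed basis, feed it into Theorem~\ref{thm:A} to get the lower bound $\kl_m\gtrsim m$, and close with $\kl_m\le\kk_m\lesssim m$ from \eqref{eq:BanachEstimate}. The only difference is that you make explicit the remark that totality of the seed basis passes through via the ``moreover'' clause of Theorem~\ref{thm:A}, a point the paper's proof leaves implicit.
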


\begin{proof}
Combining  Theorem~\ref{thm:ExistsMB} with Theorem~\ref{thm:A} yields an $M$-bounded basis $\BB$ with  $\kl_m[\BB]\gtrsim m$ for $m\in\NN$. In light of \eqref{eq:BanachEstimate}, we are done.
\end{proof}

We next see applications to greedy-like bases.
\begin{theorem}\label{eq:MainA}
Let $\XX$ be a separable Banach space that has a symmetric sequence space $\Sym$ as a complemented subspace. Suppose also that  $(\Lambda_m[\Sym])_{m=1}^\infty$ has the LRP. Then $\XX$ has a squeeze-symmetric basis $\BB$ (bidemocratic if  $(\Lambda_m[\Sym])_{m=1}^\infty$  has additionally the URP)  with 
\[\kl_m[\BB,\XX] \approx \log (1+m)\quad \text{and}\quad \udf[\BB,\XX](m) \approx \Lambda_m[\Sym],\quad m\in\NN.\] Moreover, we can choose $\BB$ to be non quasi-greedy. 
\end{theorem}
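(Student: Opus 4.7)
The construction adapts the proof of Theorem~\ref{thm:A} by altering the parameter $\eta$ in Lemma~\ref{lem:DKKW} to introduce only logarithmic (rather than exponential) conditionality at each scale. Fix the ordered partition $\sigma$ with $\abs{\sigma_{2n-1}} = \abs{\sigma_{2n}} = s_n := 2^n$, and pick via Theorem~\ref{thm:ExistsMB} an $M$-bounded semi-normalized basis $\XB$ of $P_\sigma(\Sym) \oplus \XX$. Set $\eta = (\lambda_n,\mu_n)_{n=1}^\infty$ with $\lambda_n = s_n/\Lambda_{s_n}$ and $\mu_n = n\Lambda_{s_n}/s_n$, so that $\lambda_n\mu_n = n$; this is admissible (after discarding a finite initial segment) for the construction of $\XB_{\,\eta}$, which by Lemma~\ref{lem:GB} remains a semi-normalized $M$-bounded basis. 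Form $\YY := \YY[\XB_{\,\eta}, \Sym, \sigma]$; by Lemma~\ref{lem:isomorphism}, $\YY \simeq Q_\sigma(\Sym) \oplus P_\sigma(\Sym) \oplus \XX \simeq \Sym \oplus \XX \simeq \XX$, the last isomorphism using the complementedness of $\Sym$ together with $\Sym \oplus \Sym \simeq \Sym$. Let $\BB$ be the unit vector basis of $\YY$ transferred to $\XX$ along this chain.

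Since $M_n \approx \abs{\sigma_n}$ by construction and $(\Lambda_m[\Sym])$ has the LRP, Proposition~\ref{prop:basisY}\ref{prop:basisY:11} gives that $\BB$ is squeeze-symmetric and in particular truncation-quasi-greedy; the estimate \eqref{eq:GE} then yields the upper bound $\kl_m[\BB,\XX] \lesssim 1+\log m$. For the fundamental function, Proposition~\ref{prop:basisY}\ref{prop:basisY:12} identifies $(\Ind_{\sigma_n}[\EB,\YY])_{n=1}^\infty$ isometrically with an affinity of $\XB_{\,\eta}$, and combining this with the squeeze-symmetric Lorentz embeddings of Proposition~\ref{prop:basisY}\ref{prop:basisY:10} together with the Dini-type regularity of Lemma~\ref{lem:ULRP} gives $\udf[\BB,\XX](m) \approx \Lambda_m[\Sym]$ for all $m \in \NN$.

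The matching lower bound on $\kl$ comes from redoing the computation of Lemma~\ref{lem:DKKW}\ref{DKKW:4} with the new $\eta$: one has $R_n(1,1) \approx 2\lambda_n \Lambda_{s_n} = 2s_n$ and $R_n(-1,1) \approx \Lambda_{s_n}/\mu_n = s_n/n$. Hence the vector $f_n := (n/s_n)(\Ind_{\sigma_{2n-1}}-\Ind_{\sigma_{2n}})$ has norm $\approx 1$ in $\YY$, while $S_{\sigma_{2n-1}}[\EB,\YY](f_n) = (n/s_n)\Ind_{\sigma_{2n-1}}$ has norm $\approx n$; since $\supp(f_n) \subseteq \{1,\dots,M_{2n-1}\}$ with $M_{2n-1} \approx 2^n$, we obtain $\kl_{M_{2n-1}}[\BB,\XX] \gtrsim n \approx \log M_{2n-1}$, extending to all $m$ by monotonicity. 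The same vectors $f_n$ witness the failure of quasi-greediness: all coefficients of $f_n$ have equal modulus, so $\sigma_{2n-1}$ is a valid greedy set of size $s_n$, and the corresponding TGA output has norm $\approx n\norm{f_n}$, precluding any uniform quasi-greedy constant.

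For bidemocracy under the additional URP assumption: by \eqref{eq:SubsBidem}, $\Lambda_m[\Sym_0^*] \approx m/\Lambda_m[\Sym]$, and by Lemma~\ref{lem:ULRP} the LRP (respectively URP) of $(\Lambda_m[\Sym])$ converts into the URP (respectively LRP) of $(\Lambda_m[\Sym_0^*])$. Hence under both LRP and URP, both $(\Lambda_m[\Sym])$ and $(\Lambda_m[\Sym_0^*])$ enjoy the LRP, and Proposition~\ref{prop:basisY}\ref{prop:basisY:5} combined with the same argument applied on the dual side yields $\udf[\BB^*,\XX^*](m) \approx m/\Lambda_m[\Sym]$. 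The product $\udf[\BB,\XX](m)\,\udf[\BB^*,\XX^*](m) \approx m$ then gives bidemocracy. The principal technical obstacle is verifying that all the estimates of Proposition~\ref{prop:basisY} remain sharp under the new $\eta$ — in particular that the factor $n$ introduced into $\mu_n$ does not contaminate either $R_n(-1,1) \approx s_n/n$ or the fundamental-function estimate $\udf \approx \Lambda_m[\Sym]$, so that the lower bound $\kl_m \gtrsim \log m$ matches the truncation-quasi-greedy upper bound exactly.
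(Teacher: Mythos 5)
There is a genuine gap, and it is the reason the paper's actual proof applies the DKK method \emph{twice} rather than once with a tweaked $\eta$.

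Your claim that ``$\XB_{\,\eta}$, which by Lemma~\ref{lem:GB} remains a semi-normalized $M$-bounded basis'' is a misreading of that lemma. Lemma~\ref{lem:GB} gives $\norm{a\yy_{2n-1}+b\yy_{2n}}\approx\lambda_n(a+b)$ and $\norm{\yy_{2n-1}^*}\approx\norm{\yy_{2n}^*}\approx\mu_n$, so $\norm{\yy_{2n-1}}\approx\lambda_n$ and $\norm{\yy_{2n-1}}\norm{\yy_{2n-1}^*}\approx\lambda_n\mu_n$. With your choice $\lambda_n=s_n/\Lambda_{s_n}$, $\mu_n=n\Lambda_{s_n}/s_n$, the sequence $\lambda_n=2^n/\Lambda_{2^n}$ is unbounded whenever $\Lambda_m\not\approx m$ (which is always the case when $\Sym$ has the URP, and typically the case under the LRP), so $\XB_{\,\eta}$ is \emph{not} semi-normalized. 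More to the point, any single-step choice of $\eta$ faces the same dilemma: semi-normalization of $\XB_{\,\eta}$ forces $\lambda_n\approx 1$, $M$-boundedness forces $\lambda_n\mu_n\approx 1$, and then the conditionality ratio $R_n(1,1)/R_n(-1,1)\approx\lambda_n\mu_n$ stays bounded, so you cannot simultaneously have $\XB_{\,\eta}$ semi-normalized and $M$-bounded \emph{and} inject unbounded conditionality at the block level.

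This invalidates your appeal to Proposition~\ref{prop:basisY}\ref{prop:basisY:11}, which is stated as an \emph{equivalence}: since condition \ref{SS:d} fails for $\XB_{\,\eta}$, the unit vector system of $\YY[\XB_{\,\eta},\Sym,\sigma]$ is \emph{not} squeeze-symmetric and \emph{not} truncation quasi-greedy, so neither the squeeze-symmetry conclusion nor the upper bound $\kl_m\lesssim 1+\log m$ via \eqref{eq:GE} follows. Your fundamental-function computation also fails: with your $\lambda_n$ one has $\norm{\Ind_{\sigma_{2n-1}}}_\YY = R_n(1,0)=\Lambda_{s_n}\norm{\yy_{2n-1}}\approx\Lambda_{s_n}\lambda_n=s_n$, a set of size $s_n$, so $\udf[\EB,\YY](m)\approx m$ rather than $\approx\Lambda_m[\Sym]$. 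The single-step construction you propose therefore produces essentially the Lemma~\ref{lem:DKKW}-type basis (with a smaller conditionality ratio $n$ in place of $s_n$), but still a non-squeeze-symmetric one with linear fundamental function.

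The paper's fix is to keep Lemma~\ref{lem:DKKW} exactly as stated (with $\lambda_n\mu_n=s_n$), whose whole point is to produce a \emph{semi-normalized $M$-bounded} basis $\XB_1$ of a space isomorphic to $\XX$ with $\kl_n[\XB_1]\gtrsim n$, and then to apply the DKK machine a \emph{second} time, forming $\YY_1=\YY[\XB_1,\Sym,\sigma]$ with $\abs{\sigma_n}=2^n$. Because $\XB_1$ is now semi-normalized and $M$-bounded, Proposition~\ref{prop:basisY}\ref{prop:basisY:11} and \ref{prop:basisY:10} give squeeze-symmetry and $\udf\approx\Lambda_m[\Sym]$, while Proposition~\ref{prop:basisY}\ref{prop:basisY:8} transfers the linear conditionality of $\XB_1$ to $\kl_{M_n}[\EB,\YY_1]\gtrsim\kl_n[\XB_1]\gtrsim n$ with $M_n\approx 2^n$, producing exactly the logarithmic rate. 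The damping from linear to logarithmic, and the gain of squeeze-symmetry, are both effects of the second application and cannot be emulated by rescaling $\eta$ in a single pass.

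Your dual-side argument for bidemocracy and your explicit vectors $f_n$ witnessing non-quasi-greediness are the right ideas and do appear in the paper, but they only carry weight once the construction is repaired along the two-step lines above.
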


\begin{proof}
Let $\sigma'=(\sigma_n')_{n=1}^\infty$ be the partition of $\NN$ given by $\abs{\sigma'_{2n-1}}=\abs{\sigma'_{2n}}=2^n$ for all $n\in\NN$, and let $(\sigma_n)_{n=1}^\infty$ be the partition of $\NN$ given by  $\abs{\sigma_n}=2^n$ for all $n\in\NN$.  
%Define  $\eta=(\lambda_n,\Lambda_{s_n})_{n=1}^\infty$ by $\lambda_n= s_n/\Lambda_{s_n}$ for all $n\in\NN$.  
By Teorema~\ref{thm:ExistsMB}, the Banach space 
\[
\XX_0:=P_\sigma(\Sym) \oplus P_{\sigma'}(\Sym)\oplus \XX
\]
has a semi-normalized $M$-bounded basis,  say $\XB_0$.
A combination of Lemma~\ref{lem:DKKW} and Lemma~\ref{lem:isomorphism} gives that 
$\XX_1:=Q_{\sigma'}(\Sym)\oplus \XX_0$ has a semi-normalized $M$-bounded basis $\XB_1$ such that
$R_n(a,b)\approx  2^n(a+b)$ for $a$, $b\ge 0$ and $R_n(1,-1) \approx 1$, where
\[
R_n(a,b) = \norm{ a\Ind_{\sigma'_{2n-1} }[\XB_1,\XX_1] + b\Ind_{\sigma'_{2n} }[\XB_1,\XX_1] }.
\]
As noted in the proof of Theorem~\ref{thm:A}, this implies $\kl_m[\XB_1] \gtrsim m$.  By Theorem~\ref{prop:basisY}, the unit vector system  is a sequeeze-symmetric basis of of $\YY_1=\YY[\XB_1,\Sym,\sigma]$ whose dual basis is equivalent to the unit vector system of $\YY_2=\YY[\XB_2,\Sym_0^*,\sigma]$ for a suitable semi-normalized $M$-bounded basis $\XB_2$. 

Suppose that $(\Lambda_m[\Sym])_{m=1}^\infty$ has the URP. Then  $(\Lambda_m[\Sym_0^*])_{m=1}^\infty$ has the LRP and, hence,  the unit vector system of $\YY_2$  is squeeze-symmetric as well. In particular,
\[
\udf[\EB,\YY_1](m) \udf[\EB,\YY_2](m)  \approx \Lambda_m[\Sym] \Lambda_m[\Sym_0^*]\approx m, \quad m\in\NN.
\]
Hence, the unit vector system of $\YY_1$ is bidemocratic. By Lemma~\ref{lem:isomorphism}, 
\[
\YY_1\approx Q_\sigma(\Sym) \oplus \XX_1\approx \Sym\oplus \Sym\oplus \XX\approx \XX.
\]

Theorem~\ref{prop:basisY} also gives
\[
\kl_{2^n-1}[\EB,\YY_1]\gtrsim n, \quad n\in\NN.
\]
Therefore, $\kl_m[\EB,\YY_1]\gtrsim \log m$ for $m\ge 2$. 
To prove that $\EB$ is not a quasi-greedy basis of $\YY$, we consider the vectors
\[
f=\sum_{k=1+M_{2n-2}}^{M_{2n-1}} \frac{1}{\Lambda_{\aabs{\sigma_k}}} \Ind_{\sigma_k}, \quad
g=\sum_{k=1+M_{2n-1}}^{M_{2n}} \frac{1}{\Lambda_{\aabs{\sigma_k}}} \Ind_{\sigma_k}
\]
where $M_n=\sum_{k=1}^{n} \abs{\sigma’_{k}}$. We have $\norm{-f+g}=R_n(-1,1)\approx 1$ and  $\norm{g}=R_n(0,1)\approx 2^n$. Since $g$ is a greedy projection of $-f+g$, we are done.
\end{proof}

We are now in a position to prove that separable Hilbert spaces have  squeeze-symmetric bases with large unconditionality constants. In fact,  Theorem~\ref{eq:MainA} allows us to state a mild condition on a Banach space which ensures the existence of a  truncation quasi-greedy basis for which the estimate \eqref{eq:GE} is optimal.

\begin{corollary}\label{cor:SSLarge}
Let $\XX$ be a separable Banach space that has a subsymmetric sequence space $\Sym$ as a complemented subspace. Suppose also that $\XX$ has some nontrivial type (resp., cotype). Then, $\XX$ has a bidemocratic (resp., sequeeze-symmetric) total basis $\BB$ with 
\[
\kl_m[\BB,\XX] \approx \log (1+m),\quad m\in\NN.
\]
 Moreover, we can choose $\BB$ so that is not quasi-greedy.
\end{corollary}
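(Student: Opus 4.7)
The plan is to deduce the corollary from Theorem~\ref{eq:MainA} by reading off the regularity properties that the assumed type or cotype of $\XX$ forces upon the fundamental function $(\Lambda_m[\Sym])_{m=1}^\infty$. So the work consists of linking the geometric hypothesis on $\XX$ to the LRP/URP hypothesis of Theorem~\ref{eq:MainA}, and then chasing totality through the construction.

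First I would observe that both nontrivial type and nontrivial cotype pass to complemented subspaces. Since $\Sym$ is complemented in $\XX$, it inherits whichever of these properties $\XX$ has. In the cotype case, part~(i) of Proposition~\ref{prop:RadLURP} gives that $(\Lambda_m[\Sym])_{m=1}^\infty$ has the LRP; in the type case, part~(ii) of the same proposition supplies both the LRP and the URP.

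Next I would apply Theorem~\ref{eq:MainA}. Under the LRP hypothesis alone it already produces a squeeze-symmetric basis $\BB$ of $\XX$ with
\[
\kl_m[\BB,\XX]\approx \log(1+m), \quad m\in\NN,
\]
which in addition fails to be quasi-greedy. When the URP is also available, the same theorem upgrades the conclusion to bidemocracy. This covers both alternatives in the statement.

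The only remaining point is totality, which is not stated explicitly in Theorem~\ref{eq:MainA}. I would secure it by choosing, at the very start of the construction underlying that theorem, a \emph{total} $M$-bounded basis on the auxiliary direct sum $\XX_0$; the existence of such a basis on any separable Banach space is the standard refinement of Theorem~\ref{thm:ExistsMB}. Proposition~\ref{prop:basisY}\ref{prop:basisY:1} then guarantees that totality is preserved through each DKK step, so the resulting basis $\BB$ of $\XX$ is total as required. I expect this bookkeeping to be the most delicate point, since totality must be tracked through a chain of direct-sum and renorming constructions, but there is no substantive obstruction: the genuine content of the corollary is already packaged in Proposition~\ref{prop:RadLURP} and Theorem~\ref{eq:MainA}.
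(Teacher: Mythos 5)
Your proof follows exactly the paper's route: pass the nontrivial type (resp.\ cotype) of $\XX$ to the complemented subspace $\Sym$, invoke Proposition~\ref{prop:RadLURP} to obtain the LRP (and URP in the type case), and then apply Theorem~\ref{eq:MainA}. Your additional remark about tracking totality through Theorem~\ref{thm:ExistsMB} and Proposition~\ref{prop:basisY}\ref{prop:basisY:1} is a correct and worthwhile clarification of a point the paper's one-line proof leaves implicit.
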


\begin{proof}
Just combine  Proposition~\ref{prop:RadLURP} with Theorem~\ref{eq:MainA}.
\end{proof}

Note that  Corollary~\ref{cor:SSLarge} yields, in particular, the existence of bidemocratic non-quasi-greedy bases for a wide class of Banach spaces. In this regard, it improves the results in this direction obtained in \cite{AABBL2021}.

\begin{remark}
Let $\ww$  be a weight whose primitive sequence is doubling.  Let $1\le q\le r\le \infty$ be such that either $q>1$ or $r<\infty$. Suppose that a basis $\XB$ of a quasi-Banach space $\XX$ is squeezed between  $d_q(\ww)$ and $d_r(\ww)$. Then, by \cite{Ansorena2021}*{Lemma 2.5 and Equation (2.7)} and \cite{AAB2021}*{Remark 5.2}, 
\[
\kk_m[\XB,\XX] \lesssim (1+\log m)^{1/q-1/r}.
\]
We infer that the bidemocratic basis $\BB$ achieved in Proposition~\ref{cor:SSLarge} can not be such a way squeezed. 
So, if we choose $\XX$ to be a Hilbert space, $\BB$ is a counterexample that solves in the negative \cite{ABW2021}*{Question 5.3}.
\end{remark}

We close this section by using the  machinery we have developed to generalize the construction of a democratic non-UCC basis of $\ell_2$  carried out in \cite{AABW2021}* {Example 11.21}.
%\Miguel{Habría que unificar la notación y elegir UCC o SUCC. En la definición \eqref{SUCC} y la Proposición~\ref{prop:basisY} llamamos a las bases UCC, pero usamos la definición usual de SUCC. }

\begin{proposition}
Let $\Sym$ be a locally convex subsymmetric sequence space. Then $\Sym$ has a democratic Schauder basis $\XB$ with
\[
\udf[\XB,\XX] (m)\approx \ldf[\XB,\XX](m)\approx \Lambda_m[\Sym],\quad m\in \NN,
\]
and $\sldf[\XB,\XX] (m)\approx 1$ for $m\in\NN$.
\end{proposition}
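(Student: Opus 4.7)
The strategy is to apply the extended DKK method of Section~\ref{sect:DKK} to a two-dimensional modification of the unit vector basis of $\Sym$ and then transport the resulting basis back to $\Sym$ via the isomorphism of Lemma~\ref{lem:isomorphism}. One may assume $\Lambda_m[\Sym]\to\infty$, since otherwise $\Sym\simeq c_0$ and the unit vector basis of $\Sym$ already satisfies the claim. Fix a partition $\sigma=(\sigma_n)_{n=1}^\infty$ of $\NN$ with $|\sigma_{2n-1}|=|\sigma_{2n}|=s_n=2^n$ (shifted so $\Lambda_{s_n}\ge\sqrt{2}$), and apply the construction of Section~\ref{sect:NMB} to $\EB$ with $\eta=(1,\Lambda_{s_n})_{n=1}^\infty$: by Lemma~\ref{lem:GB} this yields a semi-normalized total basis $\XB_{\,\eta}=(\yy_n)$ of $\Sym$ satisfying $\|\yy_{2n-1}-\yy_{2n}\|_\Sym\approx 1/\Lambda_{s_n}$ and $\|\yy_{2n-1}+\yy_{2n}\|_\Sym\approx 2$. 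The two-dimensional DKK trick prevents $\XB_{\,\eta}$ from being a Schauder basis (odd partial sums grow like $\Lambda_{s_n}$), but its even partial sum projections coincide with those of $\EB$, so $\XB_{\,\eta}$ still induces a two-dimensional Schauder decomposition of $\Sym$.

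Form $\YY:=\YY[\XB_{\,\eta},\Sym,\sigma]$. Lemma~\ref{lem:isomorphism} gives $\YY\simeq Q_\sigma(\Sym)\oplus\Sym$; subsymmetry of $\EB$ yields $\Sym\simeq\Sym\oplus\Sym$, and since $Q_\sigma(\Sym)$ is complemented in $\Sym$, the Pe\l czy\'nski decomposition method produces $\YY\simeq\Sym$. Applying Lemma~\ref{lem:DKKW},
\[
R_n(a,b):=\|a\Ind_{\sigma_{2n-1}}+b\Ind_{\sigma_{2n}}\|_\YY=\Lambda_{s_n}\|a\yy_{2n-1}+b\yy_{2n}\|_\Sym,
\]
so $R_n(1,1)\approx 2\Lambda_{s_n}\approx\Lambda_{2s_n}$ and $R_n(1,-1)\approx 1$. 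The latter, combined with $\|\ee_k\|_\YY\gtrsim 1$ from Proposition~\ref{prop:basisY}\ref{prop:basisY:2}, yields $\sldf[\EB,\YY](m)\approx 1$. For the fundamental function, split $\Ind_A=Q_\sigma\Ind_A+P_\sigma\Ind_A$: the $Q$-part has $\Sym$-norm $\lesssim\Lambda_{|A|}$ (using $\|Q_\sigma\|_{\Sym\to\Sym}\le 3$), while $L[\VB,\XB_{\,\eta}]P_\sigma\Ind_A=\sum_n(\Lambda_{|\sigma_n|}/|\sigma_n|)|A\cap\sigma_n|\yy_n$ is also $\approx\Lambda_{|A|}$ because each $\yy_n$ is a bounded perturbation of the corresponding unit vector. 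Hence $\udf[\EB,\YY](m)\approx\ldf[\EB,\YY](m)\approx\Lambda_m$.

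For the Schauder basis property, the two-dimensional Schauder decomposition of $\Sym$ induced by $\XB_{\,\eta}$ lifts (by a pair-version of Proposition~\ref{prop:basisY}\ref{prop:basisY:6}) to a Schauder decomposition of $\YY$ with finite-dimensional summands $\YY_{2n-1}\oplus\YY_{2n}$. Ordering the unit vectors within each pair-block $\sigma_{2n-1}\cup\sigma_{2n}$ suitably yields a Schauder basis of $\YY$ whose democracy functions are preserved (democracy being order-invariant). Transporting this Schauder basis through $\YY\simeq\Sym$ produces the desired basis of $\Sym$. The main obstacle is the tension between the DKK cancellation mechanism (which forces $\XB_{\,\eta}$ out of the Schauder class) and the Schauder requirement on the output basis of $\Sym$, resolved here by exploiting the pair-block structure; verifying $\udf\approx\ldf\approx\Lambda_m$ for arbitrary (not just full-block) sets $A$ also requires careful bookkeeping through the $Q_\sigma\oplus L[\VB,\XB_{\,\eta}]P_\sigma$ decomposition.
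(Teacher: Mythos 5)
Your overall strategy (run the extended DKK machinery on a two-dimensionally twisted basis so that blocks of equal size have comparable $+$ sums but cancelling $-$ sums) is the right one, but there are two substantive departures from the paper's proof, and one of them leaves a genuine gap.

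\textbf{Choice of base basis.} You feed $\EB$ itself into the DKK construction and take $\YY=\YY[\EB_{\,\eta},\Sym,\sigma]$, then identify $\YY$ with $\Sym$ by Pe\l czy\'nski. The paper instead takes $\XB=\VB[\Sym,\sigma]$, the normalized averaging basis of $P_\sigma(\Sym)$, with $|\sigma_{2n-1}|=|\sigma_{2n}|=n$. This is not a cosmetic difference: with $\XB=\VB[\Sym,\sigma]$ the ``un-twisted'' space $\YY_1:=\YY[\VB[\Sym,\sigma],\Sym,\sigma]$ has norm $\|Q_\sigma f\|_\Sym+\|P_\sigma f\|_\Sym\approx\|f\|_\Sym$, so its unit vector system is literally (2-)equivalent to $\EB$ on $\Sym$. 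With your $\XB=\EB$, the untwisted norm is $\|Q_\sigma f\|_\Sym+\|\sum_n \vv_n^*(f)\,\ee_n\|_\Sym$, which replaces the block $P_\sigma f$ by a compression to singletons; this is \emph{not} obviously $\approx\|f\|_\Sym$ in a general subsymmetric space, and you would have to prove it separately.

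\textbf{The democracy estimates.} This is where the gap is. You justify $\|L[\VB,\XB_{\,\eta}]P_\sigma \Ind_A\|\approx\Lambda_{|A|}$ by saying ``each $\yy_n$ is a bounded perturbation of the corresponding unit vector.'' That reasoning cannot be right as stated: the $\yy_n$'s are $O(1)$-perturbations of the $\ee_n$'s with $\|\yy_n^*\|\to\infty$, and indeed $(\yy_n)$ is deliberately \emph{not} equivalent to $(\ee_n)$ --- that is the whole point of the twist, and what makes $R_n(1,-1)\approx 1\ll\Lambda_{s_n}$ possible. What actually saves the day in the paper is Lemma~\ref{lem:EqPositive}: $(\UB^2)_\eta$ is equivalent to $\UB^2$ \emph{for non-negative scalars only}. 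Applied with $\UB=\VB[\Sym,\tau]$ (and $|\tau_n|=n$, so that $\UB^2\approx\VB[\Sym,\sigma]$ by subsymmetry), this gives the positive-scalar equivalence of the unit vector systems of $\YY$ and $\YY_1$, and since $\vv_n^*(\Ind_A)\ge 0$ for every $A$, the democracy functions transfer immediately to those of $\EB$ on $\Sym$, yielding $\udf\approx\ldf\approx\Lambda_m$ with no ``careful bookkeeping'' required. You never invoke Lemma~\ref{lem:EqPositive}, and without it (or an explicit replacement for the compressed-versus-spread comparison) your verification of $\udf\approx\ldf\approx\Lambda_m$ is incomplete.

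Your treatment of $\sldf\approx 1$ is essentially fine (though the lower bound comes from $M$-boundedness of $\EB^*$ in $\YY$, i.e.\ $\|\ee_k^*\|\lesssim 1$, not from $\|\ee_k\|\gtrsim 1$). Your remarks on the Schauder/pair-block decomposition and on the case $\Lambda_m\not\to\infty$ are reasonable observations, and I won't press you on the Schauder claim since the paper is itself terse there; but note that ``ordering the unit vectors within each pair-block suitably yields a Schauder basis'' is a claim that requires uniform boundedness of the in-block partial sums, which you assert without argument.

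\end{document}
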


\begin{proof}
Let $\sigma$ the an ordered partition of $\Sym$ with $\abs{\sigma_{2n-1}}=\abs{\sigma_{2n}}=n$. Set $\eta=(1,\Lambda_{s_n})_{n=1}^\infty$. Consider the unit vector system $\EB$ of the space
\[
\YY:=\YY[ (\VB[\Sym,\sigma])_{\,\eta},\Sym,\sigma]\approx Q_\sigma(\Sym)\oplus P_\sigma(\Sym)\approx \Sym.
\]
By Lemma~\ref{lem:DKKW}, $\sldf[\EB,\YY](2n)\approx 1$ for $n\in\NN$. Let $\tau$ be the ordered partition of $\NN$ given by $\abs{\tau_n}=n$ for all $n\in\NN$. By subsymmetry, $(\VB[\Sym,\tau])^2$ is equivalent to $\VB[\Sym,\sigma]$. Then, by Lemma~\ref{lem:GB}\ref{lem:GB:2} and Lemma~\ref{lem:EqPositive}, $\VB[\Sym,\sigma]$ and $(\VB[\Sym,\sigma])_{\,\eta}$ are equivalent for positive scalars. Consequently, the unit vector systems of $\YY$ and $\YY_1:=\YY[ (\VB[\Sym,\sigma], \Sym,\sigma])$ are equivalent for positive scalars. In turn, the unit vector system of $\YY_1$ is equivalent to that of $\Sym$. Applying these equivalences to constant coefficients sequences yields the desired result.
\end{proof}

\section{Measuring conditionality via the near unconditionality functions}\noindent
In Section~\ref{sect:LUC}, we have used the DKK method to obtain greedy-like bases with large unconditionality constants. In general, the conditionality of such bases in terms of their unconditionality constants has been extensively studied in the literature (see, e.g., \cite{AAB2021}, \cite{AAW2019}, \cite{AADK2018}, \cite{Ansorena2021},  \cite{BBG2017}, \cite{BBGHO2018}). In this section, we look at their conditionality using a different meausuring tool. First, we need some additional notation and terminology. Given a 
%semi-normalized $M$-bounded 
basis  $\XB=(\xx_n)_{n=1}^\infty$ of a 
%$p$-Banach 
quasi-Banach space $\XX$
%, $0<p\le 1$, 
we define 
\begin{align*}
%\alpha_1&=\sup_{n\in\NN}\norm{\xx_n},\\
%\alpha_2&=\sup_{n\in\NN}\norm{\xx_n^*},\\
%\alpha_3&=\sup_{n\in \NN}\norm{\xx_n }\norm{\xx_n^*},\\
\Cu=\Cu[\XB,\XX]&=\{f\in \XX \colon \norm{\Fou(f)}_\infty=\sup_n\abs{\xx_n^*(f)}\le 1\}, \mbox{ and}\\
A(f,a)&=\{n\in\NN \colon |\xx_n^*(f)|\ge a\},\quad f\in \XX, \, a>0.\\
%\norm{f}_{\infty}&=\norm{\Fou(f)}_\infty=\sup_{n\in\NN}  \abs{\xx_n^*(f)},\quad f\in \XX.
\end{align*}
%In this section, all bases are seminormalized and $M$-bounded. 
We say that basis $\XB$ is \emph{nearly unconditional} if for each $a\in(0,1]$ there is a constant $C$ such that, for any $f\in \Cu$,
\begin{equation}\label{nearlyuncdef}
\Vert S_A(f)\Vert\le C\Vert f\Vert,
\end{equation}
whenever $A\subseteq A(f,a)$. The \emph{near unconditionality function of the basis} is the function $\phi\colon(0,1]\rightarrow [1,+\infty)$ that gives the optimal $C$ for which \eqref{nearlyuncdef} holds.

The notion of near unconditionality was introduced by Elton in \cite{Elton1978}, and imported to greedy approximation theory by Dilworth et al.\ in \cite{DKK2003}. It is known that truncation quasi-greedy bases are nearly unconditional (see \cite{AABBL2022}*{Corollary 3.5} and \cite{DKK2003}*{Proposition 4.5}), so their conditionality can be studied by means of the near unconditionality function $\phi$. In this context, it is natural to ask how this approach compares with the standard way of studying it via the unconditionality parameters $(\kk_m)_{m=1}^{\infty}$.  As we shall see, despite the facts that 
%$\phi$ is defined on the semi-open unit interval and 
the cardinality of the sets in the projections involved in \eqref{nearlyuncdef} is unrestricted, the results obtained for the unconditionality constants of quasi-greedy and truncation quasi-greedy bases also hold for $\phi$, in a sense that will become clear below. We begin with a lemma that allows us to compare both measures of conditionality.
% in one direction. 
\begin{lemma}\label{lemma: kmphi} 
Let $\XB$ be a semi-normalized M-bounded nearly unconditional basis of a $p$-Banach space $\XX$, $0<p\le 1$. Let $F\colon(0,\infty)\rightarrow [1,\infty)$ be a non-deincreasing function such that 
\[
\kk_m\ge F(m), \quad m\in \NN.
\]
Then, if $\alpha_1=\sup_{n}\norm{\xx_n}$ and  $\alpha_2=\sup_{n}\norm{\xx_n^*}$,
\[
\phi(a)\ge \frac{1}{4^{1/p}\alpha_1\alpha_2} F\left(a^{-p}\right), \quad  0<a\le 1. 
\]
% The following holds:
%\begin{enumerate}[label={(\roman*)},leftmargin=*,widest=ii]
%\item\label{1/m} For every $m\in \NN$, 
%$$
%\kk_m\le 2^{1/p}\alpha_1\alpha_2 \phi\left(m^{-{{1}/{p}}}\right). 
%$$
%\item \label{>log} If $F:(0,\infty)\rightarrow [1,\infty)$ is an essentially increasing function and there is $c_1>0$ such that 
%$$
%\kk_m\ge c_1 F(m)\qquad\forall m\in \NN,
%$$
%then there is $c_2>0$ such that 
%$$
%\phi(a)\ge c_2 F\left(a^{-1}\right)\qquad\forall 0<a\le 1. 
%$$
%\end{enumerate}
\end{lemma}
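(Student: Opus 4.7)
The strategy is to exhibit, for each $a\in(0,1]$, a vector $f\in\Cu$ and a set $B\subseteq A(f,a)$ for which $\norm{S_B(f)}/\norm{f}\ge F(a^{-p})/(4^{1/p}\alpha_1\alpha_2)$. The key observation underlying the construction is that if $\norm{h}=1$ and we set $M=\max_n\aabs{\xx_n^*(h)}$ and $f=h/M$, then $\norm{S_B(f)}/\norm{f}=\norm{S_B(h)}$, so the rescaling only serves to place $f$ in $\Cu$ and to fix the threshold defining $A(f,a)$; the ratio we wish to bound from below is left unchanged.

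I would first dispose of the trivial regime. Since $1=\xx_n^*(\xx_n)\le\norm{\xx_n^*}\norm{\xx_n}$ for all $n$, one has $\alpha_1\alpha_2\ge 1$. Applying near unconditionality to $f=\xx_n\in\Cu$ with $A=\{n\}\subseteq A(f,a)$ gives $\phi(a)\ge 1$, and therefore the desired inequality is automatic whenever $F(a^{-p})\le 4^{1/p}\alpha_1\alpha_2$.

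Henceforth assume $F(a^{-p})>4^{1/p}\alpha_1\alpha_2$ and set $m:=\ceil{a^{-p}}$, so that $1\le m\le 2a^{-p}$. By monotonicity of $F$, $\kk_m\ge F(m)\ge F(a^{-p})$, so the definition of $\kk_m$ together with the density of $\langle\XB\rangle$ in $\XX$ produces, for any $\epsilon>0$, a finite set $A\subseteq\NN$ with $\abs{A}\le m$ and a finitely supported $h\in\XX$ of norm one satisfying $\norm{S_A(h)}\ge F(a^{-p})-\epsilon$. Set $M:=\max_n\aabs{\xx_n^*(h)}\le\alpha_2$, $f:=h/M\in\Cu$, and $B:=A\cap A(f,a)$. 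For each $n\in A\setminus B$ one has $\aabs{\xx_n^*(h)}<aM$, and the $p$-convexity of the quasi-norm then gives
\[
\norm{S_{A\setminus B}(h)}^p\le\sum_{n\in A\setminus B}\aabs{\xx_n^*(h)}^p\norm{\xx_n}^p\le m(aM\alpha_1)^p\le 2(\alpha_1\alpha_2)^p,
\]
using $m\le 2a^{-p}$ and $M\le\alpha_2$ in the last step.

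A second use of $p$-convexity on $S_A(h)=S_B(h)+S_{A\setminus B}(h)$, followed by $\epsilon\to 0$, yields $\norm{S_B(h)}^p\ge F(a^{-p})^p-2(\alpha_1\alpha_2)^p$. The working hypothesis $F(a^{-p})^p\ge 4(\alpha_1\alpha_2)^p$ together with $\alpha_1\alpha_2\ge 1\ge 2^{-1/p}$ suffices to absorb the error and delivers $\norm{S_B(h)}\ge F(a^{-p})/(4^{1/p}\alpha_1\alpha_2)$; the scale-invariance noted at the outset then gives the claim. The main obstacle is calibrating $m$ so that the small-coefficient tail $m(aM\alpha_1)^p$ stays a constant multiple of $(\alpha_1\alpha_2)^p$ while simultaneously keeping $F(m)\ge F(a^{-p})$; the choice $m=\ceil{a^{-p}}$ achieves precisely this, and the preliminary case split is what permits the resulting error to be absorbed.
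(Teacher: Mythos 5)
Your proof is correct and rests on the same core decomposition as the paper's: split the index set of a coordinate projection into a small-coefficient part (controlled crudely by $p$-convexity, using $m\cdot a^p\lesssim 1$) and a large-coefficient part lying inside $A(\cdot,a)$ (controlled by near unconditionality). The paper packages this as the uniform inequality $\kk_m\le 2^{1/p}\alpha_1\alpha_2\,\phi(m^{-1/p})$ and then feeds in $\kk_m\ge F(m)$, using the doubling bound $\kk_{m+1}^p\le 2\kk_m^p$ to pass from $F(a^{-p})$ to $F(m+1)$; you instead take a near-extremizer $(h,A)$ for $\kk_m$ at $m=\lceil a^{-p}\rceil$, rescale $h$ into $\Cu$, and show directly that its large-coefficient subprojection already witnesses $\phi(a)\gtrsim F(a^{-p})$, disposing of the error term via your preliminary case split on whether $F(a^{-p})\le 4^{1/p}\alpha_1\alpha_2$. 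The two write-ups trade the doubling trick for the case split, but the substance and the resulting constant are the same.
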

\begin{proof}
Pick $m\in\NN$ such that $m\le a^{-p}<m+1$.
%To prove \ref{1/m} we  pick 
Then, fix $f\in \XX$ with $\norm{f}=1$, and $A\subseteq \NN$ with $\abs{A}\le m$. Set $g=
f/ \norm{\Fou(f)}_{\infty}$, so that $g\in \Cu$. Let $(A_1,A_2)$ be the partition of $A$ defined by
%Write $A=A_1\sqcup A_2$, where
$$
A_1:=\left\lbrace n\in A: |\xx_n^*(g)|\le m^{-{1}/{p}}\right\rbrace. 
$$
We have
\begin{align*}
\norm{S_{A}(g)}^p
&\le \norm{S_{A_1}(g)}^p+\norm{S_{A_2}(g)}^p\\
%&\le  \sum_{n\in A_1}\left\Vert \xx_n^*(g)\xx_n\right\Vert^p+\phi^{\,p}\left(m^{-{{1}/{p}}}\right)\|g\|^p\\
&\le  \sum_{n\in A_1}\abs{ \xx_n^*(g)}^p \norm{\xx_n}^p+\phi^{\,p}\left(m^{-{{1}/{p}}}\right)\|g\|^p\\
&\le \frac{\abs{A_1}\alpha_1^p}{m}+\phi^{\,p}\left(m^{-{{1}/{p}}}\right)\|g\|^p.
%e \frac{|A_1|\alpha_1^p}{m}+\frac{1}{\|f\|_{\infty}^{p}} \left(\phi\left(m^{-{\frac{1}{p}}}\right)\right)^p. 
\end{align*}
Thus, since $\abs{A_1}\le m$, $\norm{g} \norm{\Fou(f)}_\infty=1$, and $\norm{\Fou(f)}_\infty\le \alpha_2 $,
\[
\norm{S_{A}(f)}^p \le \alpha_1^p\alpha_2^p+\phi^{\,p}\left(m^{-{{1}/{p}}}\right)
\le 2 \alpha_1^p\alpha_2^p\phi^{\,p}\left(m^{-{{1}/{p}}}\right).
\]
%\begin{align*}
%\|P_{A}(f)\|^p\le& \alpha_1^p\|f\|_{\infty}^p+\left(\phi\left(m^{-{\frac{1}{p}}}\right)\right)^p\le (\alpha_1\alpha_2)^p+\left(\phi\left(m^{-{\frac{1}{p}}}\right)\right)^p\\
%\le& \left(2^{\frac{1}{p}}\alpha_1\alpha_2\phi\left(m^{-{\frac{1}{p}}}\right)\right)^p.
%\end{align*}
Taking the supremum over $f$ and $A$ we obtain
\begin{equation}\label{eq:1/m}
\kk_m \le 2^{1/p}\alpha_1\alpha_2 \phi\left(m^{-{{1}/{p}}}\right).
\end{equation}
%the desired upper bound for $\kk_m$.
Since $\phi$ is non-increasing, combining \eqref{eq:1/m} with the inequality $\kk_{m+1}^p\le \kk_{2m}^p\le 2\kk_m^p$ gives
\[
F(a^{-p})\le F(m+1) \le \kk_{m+1} \le 4^{1/p}\alpha_1\alpha_2 \phi\left(m^{-{{1}/{p}}}\right)
\le  4^{1/p}\alpha_1\alpha_2\phi(a).\qedhere
\]
%\ref{>log} Choose $c_3>0$ so that 
%$$
 %F(t)\ge c_3 F(s)\quad\forall 0<s\le t<\infty. 
%$$
%Given $f\in \Cu$, $0<a< 1$ and $A\subseteq A(f,a)$, pick $m\in \NN$ so that $(m+1)^{-1}\le a<m^{-1}$. Since $\phi$ is non-increasing, 
%\begin{align*}
%\phi\left(a\right)\ge& \phi\left(m^{-1}\right)\ge \frac{1}{2^{\frac{1}{p}}\alpha_1\alpha_2}\kk_m\ge \frac{1}{2^{\frac{2}{p}}\alpha_1\alpha_2}\kk_{m+1}\\
%\ge& \frac{c_1}{2^{\frac{2}{p}}\alpha_1\alpha_2}F(m+1)\ge \frac{c_1c_3}{2^{\frac{2}{p}}\alpha_1\alpha_2}F\left(a^{-1}\right),
%\end{align*}
%where we used the immediate inequality $\kk_{m+1}^p\le \kk_{2m}^p\le 2\kk_m^p.$
\end{proof}
%Lemma~\ref{lemma: kmphi} tells us that, in a natural sense, lower bounds for $\kk_m$ are stronger than lower bounds for $\phi$, whereas the opposite is true of upper bounds. We will obtain the former directly from the corresponding lower bounds for $\kk_m$, and the latter by adapting the respective proofs. We begin with the lower bounds: Lemma~\ref{lemma: kmphi} allows us to translate Theorem~\ref{eq:MainA} to this setting.
Lemma~\ref{lemma: kmphi} tells us that, in a natural sense, lower bounds for $\kk_m$ are stronger than lower bounds for $\phi$. Thus, it allows us to transfer to the language of near unconditionality functions any result that guarantees the existence of bases with large unconditionality constants. For instance, we infer the following results.

\begin{corollary}\label{cor:MainA}
Let $\XX$ be a separable Banach space. Suppose that a symmetric sequence space $\Sym$ is complemented in $\XX$. Suppose also that  $(\Lambda_m[\Sym])_{m=1}^\infty$ has the LRP. Then $\XX$ has a squeeze symmetric (bidemocratic if  $(\Lambda_m[\Sym])_{m=1}^\infty$ also has the URP)  total basis $\BB$ with 
\[
\phi(a)\gtrsim 1-\log a, \quad 0<a\le 1,
\]
and $\udf[\BB,\XX](m) \approx \Lambda_m[\Sym]$ for $m\in\NN$. Moreover, $\BB$ is not quasi-greedy. 
\end{corollary}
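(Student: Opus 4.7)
The plan is to deduce the corollary by combining the existence result of Theorem~\ref{eq:MainA} with the comparison estimate in Lemma~\ref{lemma: kmphi}, which translates lower bounds on the unconditionality parameters into lower bounds on the near unconditionality function. In other words, the statement is essentially a formal consequence of the two preceding results, plus some bookkeeping about totality.

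First, I would apply Theorem~\ref{eq:MainA} to the pair $(\XX,\Sym)$ to produce a squeeze-symmetric (bidemocratic when $(\Lambda_m[\Sym])_{m=1}^\infty$ additionally has the URP) non-quasi-greedy basis $\BB$ of $\XX$ satisfying
\[
\kl_m[\BB,\XX] \approx \log(1+m) \quad\text{and}\quad \udf[\BB,\XX](m) \approx \Lambda_m[\Sym], \quad m\in\NN.
\]
Totality of $\BB$ can be arranged inside the proof of Theorem~\ref{eq:MainA} by starting from a total $M$-bounded basis of the auxiliary space $\XX_0$ (available from Theorem~\ref{thm:ExistsMB}) and invoking Proposition~\ref{prop:basisY}\ref{prop:basisY:1}, which transfers totality from $\XB_1$ to the unit vector system of $\YY[\XB_1,\Sym,\sigma]$. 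The clauses $\udf[\BB,\XX](m)\approx\Lambda_m[\Sym]$ and non-quasi-greediness in the conclusion are inherited directly from Theorem~\ref{eq:MainA}; only the estimate on $\phi$ requires further work.

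For the bound on $\phi$, note that $\BB$, being squeeze-symmetric, is in particular truncation quasi-greedy, hence semi-normalized, $M$-bounded, and nearly unconditional (by the properties recalled at the start of Section~5). From $\kk_m[\BB,\XX]\ge\kl_m[\BB,\XX]\gtrsim\log(1+m)$, I obtain a positive constant $c$ with $\kk_m\ge c\log(1+m)$ for every $m\in\NN$. Setting $F(m):=\max\{1,c\log(1+m)\}$ so that $F$ takes values in $[1,\infty)$ as the lemma demands, Lemma~\ref{lemma: kmphi} applied with $p=1$ yields
\[
\phi(a) \gtrsim F(1/a) \gtrsim \log\!\left(1+\tfrac{1}{a}\right), \quad 0<a\le 1.
\]
Since $\log(1+1/a)\ge -\log a$ on $(0,1]$ while $\phi(a)\ge 1$ holds by definition, averaging the two lower bounds produces $\phi(a)\gtrsim 1-\log a$, as desired.

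The argument is entirely routine once Theorem~\ref{eq:MainA} and Lemma~\ref{lemma: kmphi} are at hand; no essential obstacle is anticipated beyond the mild bookkeeping needed to convert lower estimates for $\kk_m$ into lower estimates for the near unconditionality function and to ensure the resulting basis is total.
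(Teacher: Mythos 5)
Your proposal matches the paper's own argument, which explicitly states that Corollary~\ref{cor:MainA} follows by combining Lemma~\ref{lemma: kmphi} with Theorem~\ref{eq:MainA}. You correctly verify the hypotheses of the lemma (squeeze-symmetric implies truncation quasi-greedy, hence semi-normalized, $M$-bounded and nearly unconditional), choose a valid non-decreasing $F$ with values in $[1,\infty)$, and carry out the elementary estimate $\max\{1,\log(1+1/a)\}\gtrsim 1-\log a$, so the reasoning is sound and essentially identical to the intended one.
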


\begin{corollary}\label{corollaryreflexiveqg}There is a reflexive Banach space $\XX$ with an almost greedy Schauder basis $\XB$ such that
\[
\phi(a)\gtrsim 1-\log a, \quad 0<a\le 1.
\]
\end{corollary}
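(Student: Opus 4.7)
The plan is to derive Corollary~\ref{corollaryreflexiveqg} as a direct application of Lemma~\ref{lemma: kmphi} to a preexisting example. Specifically, I would invoke the almost greedy Schauder basis of a reflexive Banach space (for instance, of $\ell_p$ for any $1<p<\infty$) constructed by Dilworth, Kalton, and Kutzarova in \cite{DKK2003}, whose unconditionality parameters attain the maximal logarithmic growth permitted by \eqref{eq:GE}; that is, $\kk_m\gtrsim 1+\log m$ for $m\in\NN$. Any such space is reflexive, so the ambient-space requirement of the corollary is automatic.

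I would then verify that such an $\XB$ satisfies all the hypotheses of Lemma~\ref{lemma: kmphi}. It is semi-normalized, since democratic bases are semi-normalized. It is $M$-bounded, since almost greedy (and even just quasi-greedy) bases are $M$-bounded. Finally, since almost greedy bases are quasi-greedy and hence truncation quasi-greedy, they are nearly unconditional by \cite{AABBL2022}*{Corollary 3.5} (or \cite{DKK2003}*{Proposition 4.5}). Setting $p=1$ (we are in a Banach space) and $F(t)=c(1+\log t)$ for a suitable $c>0$ so that $\kk_m\ge F(m)$, Lemma~\ref{lemma: kmphi} yields
\[
\phi(a)\gtrsim F(a^{-1})=1+\log(a^{-1})=1-\log a, \quad 0<a\le 1,
\]
which is exactly the bound claimed.

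Insofar as there is any obstacle, it is purely bibliographic: one must locate in \cite{DKK2003} (or cite via \cite{AAGHR2015}) an almost greedy Schauder basis of a reflexive Banach space achieving the extremal growth rate in \eqref{eq:GE}. All the analytic content of the corollary is encoded in Lemma~\ref{lemma: kmphi}, whose role here is exactly to translate sharp lower bounds on $\kk_m$ into sharp lower bounds on $\phi$. Should one wish to avoid directly citing \cite{DKK2003}, an alternative route would be to revisit the DKK construction of Section~\ref{sect:DKK} applied to $\Sym=\ell_p$ combined with a quasi-greedification argument to ensure the resulting basis is genuinely almost greedy; but the direct appeal to the existing construction is both shorter and sufficient.
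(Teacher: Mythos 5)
Your overall strategy is the right one and matches the paper exactly at the level of the key lemma: the paper also proves Corollary~\ref{corollaryreflexiveqg} by combining Lemma~\ref{lemma: kmphi} with a pre-existing construction of an almost greedy Schauder basis whose unconditionality parameters are of order $\log m$. However, there is a genuine mathematical error in the example you propose, not merely a bibliographic gap.

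You propose to use an almost greedy Schauder basis of $\ell_p$, $1<p<\infty$, with $\kk_m\gtrsim 1+\log m$. No such basis exists: $\ell_p$ for $1<p<\infty$ is superreflexive, and by \cite{AAGHR2015}*{Theorem 1.1} (the estimate \eqref{AAGHRestimate} quoted in the introduction of this very paper) every quasi-greedy basis of a superreflexive Banach space satisfies $\kk_m\lesssim(1+\log m)^{1-\epsilon}$ for some $\epsilon>0$. Since almost greedy bases are quasi-greedy, the DKK almost greedy bases of $\ell_p$ are subject to this strictly sublogarithmic ceiling. This is precisely why the paper's Corollary~\ref{cor:DKK} (the $\ell_p$ version, obtained from \cite{AADK2018}*{Theorem 4.5}) only gives $\phi(a)\gtrsim(1-\log a)^{1-\epsilon}$ and not the full logarithm. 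To reach the full logarithmic lower bound one must leave the superreflexive setting: the paper instead invokes \cite{AADK2018}*{Theorem 4.18}, which produces a \emph{reflexive but non-superreflexive} Banach space with an almost greedy Schauder basis whose unconditionality parameters genuinely attain the maximal growth $\kk_m\approx 1+\log m$. Once you replace your $\ell_p$-based example by that one, the rest of your argument (semi-normalized and $M$-bounded because almost greedy, nearly unconditional because truncation quasi-greedy, then Lemma~\ref{lemma: kmphi} with $p=1$) is correct and is exactly what the paper does.
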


\begin{corollary}\label{cor:DKK}
Let $\XX$ be a Banach space with a Schauder basis. Suppose that $\ell_p$, $1<p<\infty$, is complemented in $\XX$. Then $\XX$ has, for any $0<\epsilon<1$, an almost greedy Schauder basis $\BB$ with 
\[
\phi(a)\gtrsim \left( 1-\log a \right)^{1-\epsilon}, \quad 0<a\le 1,
\]
and $\udf[\BB,\XX](m) \approx m^{1/p}$ for $m\in\NN$.
\end{corollary}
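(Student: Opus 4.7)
The strategy is to combine the existence of a sharp almost greedy Schauder basis of $\XX$, with the prescribed fundamental function and a matching lower bound on its unconditionality parameters, together with the transfer principle of Lemma~\ref{lemma: kmphi}.

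For the first ingredient I would invoke the DKK-type construction available in \cite{AADK2018} and \cite{AAW2019}: since $\XX$ has a Schauder basis and contains $\ell_p$ as a complemented subspace, for every $0<\epsilon<1$ one can produce an almost greedy Schauder basis $\BB$ of $\XX$ whose fundamental function satisfies $\udf[\BB,\XX](m)\approx m^{1/p}$ and whose unconditionality parameters admit the sharp lower bound
$$
\kk_m[\BB,\XX]\gtrsim (1+\log m)^{1-\epsilon}, \quad m\in\NN.
$$
Being a Schauder basis of a Banach space, $\BB$ is automatically $M$-bounded; being almost greedy, it is democratic (hence semi-normalized) and truncation quasi-greedy (hence nearly unconditional by \cite{AABBL2022} and \cite{DKK2003}).

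With $\BB$ in hand I would plug it into Lemma~\ref{lemma: kmphi}. In the Banach setting the convexity index there is $1$ (not to be confused with the $\ell_p$ in the hypothesis), so taking
$$
F(x):=\max\bigl\{1,\,c(1+\log x)^{1-\epsilon}\bigr\}
$$
for a suitable constant $c>0$ yields a non-decreasing function with $\kk_m[\BB,\XX]\ge F(m)$ for all $m\in\NN$. Lemma~\ref{lemma: kmphi} then gives
$$
\phi(a)\gtrsim F(a^{-1})\approx (1-\log a)^{1-\epsilon}, \quad 0<a\le 1,
$$
which is the desired estimate. The fundamental function $\udf[\BB,\XX](m)\approx m^{1/p}$ is inherited verbatim from the construction.

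The only real obstacle is Step 1, namely extracting from the DKK apparatus (as refined in \cite{AADK2018} and \cite{AAW2019}) that one can indeed achieve the matching lower bound $(1+\log m)^{1-\epsilon}$ on $\kk_m$ while keeping the basis both almost greedy and Schauder, for every $0<\epsilon<1$. Once this is granted, the conversion into a bound on the near unconditionality function is automatic from Lemma~\ref{lemma: kmphi}, and no further calculation is required.
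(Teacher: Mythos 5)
Your proposal is correct and follows essentially the same route as the paper: the authors obtain Corollary~\ref{cor:DKK} by combining Lemma~\ref{lemma: kmphi} with the DKK-type existence result \cite{AADK2018}*{Theorem~4.5}, which is precisely the basis you construct in your Step 1 (an almost greedy Schauder basis $\BB$ of $\XX$ with $\udf[\BB,\XX](m)\approx m^{1/p}$ and $\kk_m[\BB,\XX]\gtrsim(1+\log m)^{1-\epsilon}$). Your translation via Lemma~\ref{lemma: kmphi} with convexity index $1$ and $F(x)=\max\{1,c(1+\log x)^{1-\epsilon}\}$ is exactly the intended conversion, so there is no gap.
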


In fact, Corollary~\ref{cor:MainA}, Corollary~\ref{corollaryreflexiveqg}  and Corollary~\ref{cor:DKK}  follow from combining Lemma~\ref{lemma: kmphi}  with Theorem~\ref{eq:MainA}, \cite{AADK2018}*{Theorem 4.18} and  \cite{AADK2018}*{Theorem 4.5}, respectively. We refer the reader to \cite{AADK2018} for more examples of Banach spaces with large unconditionality constants, then large  near unconditionality functions.

Theorem~\ref{lemma: upperboundtqg} and Theorem~\ref{prop: superrefphi} below, which improve  \cite{AAW2021b}*{Theorem 5.1} and \cite{AAGHR2015}*{Theorem 1.1}, respectively,  prove that Corollaries~\ref{cor:MainA}, \ref{corollaryreflexiveqg}  and \ref{cor:DKK} are optimal in the sense that the  near unconditionality functions of the bases we obtain are such large as possible. 
%Now we can prove the upper bound for $\phi$, which improves the bound one can obtain from \cite{AABBL2022}*{Theorem 3.4, Corollary 3.5}.

\begin{theorem}\label{lemma: upperboundtqg}
Let $\XB$ be a truncation quasi-greedy basis of a $p$-Banach space $\XX$. Then,
\[
\phi(a)\lesssim \left( 1-\log a \right)^{1/p}, \quad 0<a\le 1.
\]
\end{theorem}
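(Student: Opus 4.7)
The plan is to bound $\|S_A(f)\|$ by dyadically splitting the set $A$ according to the sizes of the coefficients, handle each dyadic slice uniformly in the slice index, and then sum up via $p$-convexity, which accounts for the exponent $1/p$ in the final bound.

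Fix $f \in \Cu$ with $\norm{f}=1$ and $A \subseteq A(f,a)$. Since $|\xx_n^*(f)| \in [a,1]$ for $n \in A$, the sets
\[
A_k = \{ n \in A \colon 2^{-k-1} < |\xx_n^*(f)| \le 2^{-k}\},\qquad k=0,1,\dots,K-1,
\]
partition $A$, with $K \le 2 + \log_2(1/a)$. By $p$-convexity,
\[
\norm{S_A(f)}^p \le \sum_{k=0}^{K-1} \norm{S_{A_k}(f)}^p,
\]
so the task reduces to showing $\|S_{A_k}(f)\| \lesssim \|f\|$ uniformly in $k$.

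For each $k$, the set $E_k = \{n \colon |\xx_n^*(f)| > 2^{-k-1}\}$ is a greedy set of $f$, so truncation quasi-greediness gives $2^{-k-1}\,\norm{\Ind_{\varepsilon(f),E_k}[\XB,\XX]} \le C_t \norm{f}$. Since $A_k \subseteq E_k$ and a TQG basis is UCC (with constant $C_u$), we deduce
\[
\norm{\Ind_{\varepsilon(f),A_k}[\XB,\XX]} \le C_u \,\norm{\Ind_{\varepsilon(f),E_k}[\XB,\XX]} \lesssim 2^{k+1}\norm{f}.
\]
The remaining ingredient is a lattice-type inequality for UCC bases in $p$-Banach spaces: for any finite $B$ and scalars $(b_n)$ with $|b_n|\le M$, writing the binary expansion $|b_n|/M = \sum_{j\ge 1} 2^{-j} c_{n,j}$ with $c_{n,j}\in\{0,1\}$ and setting $B_j=\{n\in B\colon c_{n,j}=1\}$, $p$-subadditivity followed by UCC gives
\[
\norm{\sum_{n\in B} b_n \xx_n}^p \le M^p \sum_{j\ge 1} 2^{-jp} \norm{\Ind_{\varepsilon,B_j}}^p \le \frac{M^p C_u^p}{2^p - 1}\,\norm{\Ind_{\varepsilon,B}[\XB,\XX]}^p.
\]
Applied with $B=A_k$ and $M=2^{-k}$, this yields $\|S_{A_k}(f)\|\lesssim 2^{-k}\|\Ind_{\varepsilon(f),A_k}\| \lesssim \|f\|$, with constants depending only on $p$, $C_t$ and $C_u$.

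Plugging back and using $K \lesssim 1-\log a$, we obtain $\|S_A(f)\|^p \lesssim (1-\log a)\|f\|^p$, hence $\phi(a)\lesssim (1-\log a)^{1/p}$. The step I expect to require the most care is the lattice inequality: it is the mechanism that converts coefficient information into an indicator-function estimate compatible with the UCC property, and keeping track of the $p$-convexity constants there is what guarantees that summing over the $O(1-\log a)$ dyadic layers produces exactly the exponent $1/p$ in the final asymptotic — exactly matching the lower bounds from Corollary~\ref{cor:MainA}.
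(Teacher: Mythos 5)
Your proof is correct, and the overall strategy matches the paper's: decompose $A$ into $O(1-\log a)$ dyadic layers $A_k$ according to coefficient magnitude, bound each layer uniformly, and sum the contributions using the $p$-convexity of the quasi-norm, which is exactly where the exponent $1/p$ comes from. The difference lies in how you bound each layer. The paper applies the cited Lemma~\ref{lem:truncation quasi-greedyQU} directly to $f/2$ to get $\norm{S_{A_k}(f)}\le 2C\norm{f}$, whereas you re-derive the layer bound from scratch: you use the truncation-quasi-greedy inequality on the genuine greedy set $E_k=A(f,2^{-k-1})$, pass to $A_k\subseteq E_k$ via the UCC property, and then convert the indicator estimate into a bound on $S_{A_k}(f)$ by the binary-expansion lattice argument (again UCC plus $p$-subadditivity, giving $\norm{\sum_{n\in B}b_n\xx_n}\le C_u(2^p-1)^{-1/p}M\norm{\Ind_{\varepsilon,B}}$ when $\aabs{b_n}\le M$). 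Steps two and three of your layer argument together amount to a self-contained proof of the relevant case of Lemma~\ref{lem:truncation quasi-greedyQU}, so nothing essentially new is obtained, but the route is more elementary and does not rely on the cited references. The resulting constant picks up extra factors of $C_u$ and $(2^p-1)^{-1/p}$ compared with the paper's $2CC_0$, but the asymptotic $\phi(a)\lesssim(1-\log a)^{1/p}$ is the same. Two cosmetic points: the normalization $\norm{f}=1$ is unnecessary since the estimate $\norm{S_A(f)}^p\lesssim(1-\log a)\norm{f}^p$ is homogeneous in $f$; and when you invoke $E_k$ as a greedy set you are implicitly using that $E_k$ is finite (which, like the paper's proof, rests on $\Fou(f)\in c_0$ for the admissible $f$), though this does not affect the argument.
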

%\Miguel{Este lema también se sigue de la Proposición 3.1 del paper de las equivalencias NU=QGLC, pero lo agrego acá porque éste va primero. }

\begin{theorem}\label{prop: superrefphi}
Let $\XB$ be a quasi-greedy basis of a superreflexive Banach space $\XX$. Then, there is $0<\epsilon<1$ such that
\[
\phi(a)\lesssim \left(1- \log a \right)^{1-\epsilon}, \quad 0<a\le 1.
\]
\end{theorem}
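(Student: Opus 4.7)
The plan is to reduce the estimate on $\phi$ to the known bound $\kk_m[\XB,\XX] \lesssim (1+\log m)^{1-\epsilon}$ from \cite{AAGHR2015} by proving that, under our hypotheses, whenever $f \in \Cu$ and $A \subseteq A(f,a)$, the cardinality $\aabs{A}$ is at most polynomial in $1/a$. Once that is established, the bound $\norm{S_A(f)} \le \kk_{\aabs{A}}[\XB,\XX] \norm{f}$ finishes the job. The argument runs in three steps.

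First, I would control $\norm{\Ind_{\varepsilon(f), A}[\XB,\XX]}$. After normalizing $\norm{f} \le 1$, note that $A(f,a)$ is a greedy set of $f$ and $\min_{n \in A(f,a)} \abs{\xx_n^*(f)} \ge a$. The truncation quasi-greedy property (which is implied by quasi-greediness) yields
\[
a \,\norm{\Ind_{\varepsilon(f), A(f,a)}[\XB,\XX]} \le \min_{n\in A(f,a)} \abs{\xx_n^*(f)} \,\norm{\Ind_{\varepsilon(f), A(f,a)}[\XB,\XX]} \lesssim \norm{f} \le 1.
\]
Since $\XB$ is UCC, we obtain $\norm{\Ind_{\varepsilon(f), A}[\XB,\XX]} \lesssim 1/a$; because quasi-greedy bases are insensitive to signs on a fixed support (split the support into positive and negative parts and apply the quasi-greedy inequality to each), we also get $\norm{\Ind_A[\XB,\XX]} \approx \norm{\Ind_{\varepsilon(f), A}[\XB,\XX]} \lesssim 1/a$.

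Next, I would bound $\aabs{A}$ using cotype. Superreflexivity implies that $\XX$ admits, after an equivalent renorming, some nontrivial Rademacher cotype $q < \infty$. Combining the cotype inequality with the sign-insensitivity above (which collapses the Rademacher average down to a single indicator norm) gives
\[
\Bigl(\sum_{n\in A} \norm{\xx_n}^q \Bigr)^{1/q} \lesssim \EE_\varepsilon \Bigl\lVert \sum_{n\in A} \varepsilon_n \xx_n \Bigr\rVert \approx \norm{\Ind_A[\XB,\XX]} \lesssim 1/a.
\]
Quasi-greedy bases are semi-normalized, so $\inf_n \norm{\xx_n} > 0$; hence $\aabs{A}^{1/q} \lesssim 1/a$, whence $\aabs{A} \lesssim a^{-q}$ and $1+\log\aabs{A} \lesssim 1-\log a$.

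Finally, the estimate from \cite{AAGHR2015} applied to the coordinate projection yields
\[
\norm{S_A(f)} \le \kk_{\aabs{A}}[\XB,\XX]\, \norm{f} \lesssim (1+\log \aabs{A})^{1-\epsilon}\norm{f} \lesssim (1-\log a)^{1-\epsilon} \norm{f},
\]
as desired. The main obstacle to this plan is the polynomial cardinality bound in the second step; this is exactly where the superreflexivity of $\XX$ enters essentially, since weaker hypotheses (e.g.\ mere reflexivity) would only furnish sub-linear growth rates for the lower democracy function, insufficient to bring $\log\aabs{A}$ down to the size $1-\log a$ required for the target exponent $1-\epsilon$.
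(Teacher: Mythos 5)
Your plan breaks down at the very first step, and the error propagates through the rest of the argument. You write ``after normalizing $\norm{f}\le 1$,'' but the admissible class in the definition of $\phi$ is $\Cu=\{f:\norm{\Fou(f)}_\infty\le 1\}$, which is a constraint on the coefficient sup-norm, \emph{not} on $\norm{f}$. For a conditional quasi-greedy basis --- and conditionality is the whole point of this circle of results --- $\Cu$ contains vectors of arbitrarily large norm, and the class of pairs $(f,A)$ with $f\in\Cu$, $A\subseteq A(f,a)$ is not invariant under rescaling $f$ (scaling down shrinks $A(f,a)$ and may eject $A$; scaling up may leave $\Cu$). So you cannot normalize $\norm{f}$. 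Running your argument honestly, the truncation quasi-greedy property only gives $\norm{\Ind_{\varepsilon(f),A(f,a)}}\lesssim \norm{f}/a$, the cotype step then yields $\aabs{A}\lesssim(\norm{f}/a)^q$, and the final estimate becomes
\[
\norm{S_A(f)}\le \kk_{\aabs{A}}[\XB,\XX]\,\norm{f}\lesssim \bigl(1+q\log\norm{f}-q\log a\bigr)^{1-\epsilon}\norm{f},
\]
with an extraneous $\log\norm{f}$ term that is unbounded over $\Cu$ and cannot be absorbed into $(1-\log a)^{1-\epsilon}$. In short, the cardinality of $A$ simply is not controlled by $a$ alone: a vector $f\in\Cu$ can have an enormous set $A(f,a)$ provided $\norm{f}$ is correspondingly large, and the bound $\norm{S_A(f)}\le\kk_{\aabs{A}}\norm{f}$ is then far weaker than what is needed.

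The paper avoids bounding $\aabs{A}$ altogether. It partitions $A$ into $O(1-\log_2 a)$ dyadic shells $A_k=A\cap\bigl(A(f,2^{-k})\setminus A(f,2^{1-k})\bigr)$, shows $\norm{S_{A_k}(f)}\lesssim\norm{f}$ uniformly in $k$ via Lemma~\ref{lem:truncation quasi-greedyQU}, and then recombines the shells. Superreflexivity enters not through cotype and a cardinality count, but through the upper $p$-estimate of \cite{ABW2021}*{Theorem 2.3}, which gives, for some $p>1$, $\norm{\sum_k S_{A_k}(f)}^p\lesssim\sum_k\norm{S_{A_k}(f)}^p$ for such ``coefficient-decreasing'' disjointly supported families; summing the $O(1-\log a)$ shells then yields $\phi(a)\lesssim(1-\log a)^{1/p}$, i.e.\ the exponent $1-\epsilon$ with $\epsilon=1-1/p$. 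If you want to keep the spirit of your approach you would need a bound on $\aabs{A}$ that is uniform over $f\in\Cu$, and no such bound exists.
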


Before facing the proof of these results, we bring up an auxiliary lemma.
%It turns out that the above bound is optimal, thus also 
%\[
%\phi(a)\approx \log\left(1+a^{-1}\right) \qquad 0<a\le 1. 
%\]
%To prove this, we combine the argument of \cite{DKK2003}*{Lemma 8.2} with the following lemma. 
\begin{lemma}[See \cite{AABBL2021}*{Lemma 2.3}, \cite{AABW2021}*{Proposition 4.16} or \cite{AAW2021b}*{Lemma 5.2}]\label{lem:truncation quasi-greedyQU}
Suppose that $\XB=(\xx_n)_{n=1}^\infty$ is a semi-normalized truncation quasi-greedy basis of a quasi-Banach space $\XX$. Then there is a constant $C$ such that
\begin{align*}
\left\Vert \sum_{n\in A} a_n\, \xx_n\right\Vert \le& C \Vert f\Vert
\end{align*}
for every $f\in \XX$, every finite set $A\subseteq\NN$, and every finite family $(a_n)_{n\in A}$ such that $\max_{n\in A}|a_n|\le \min_{n\in A} |\xx_n^{\ast}(f)|$.  
%Moreover, we can choose $C=\gamma^{1/p} (2^p-1)^{-1/p}\Lambda_u^2$, where $\Lambda_u$ is the truncation quasi-greedy constant of $\XB$, and 
%\begin{align*}
%\gamma=&
%\begin{cases}
%2 & \text{ if}\quad\FF=\RR,\\
%4 & \text{ if}\quad\FF=\CC.
%\end{cases}
%\end{align*}
%If, in addition, $\sgn(a_n)=\sgn(\xx_n^*(f))$ for all $n\in A$, we can choose $C= (2^p-1)^{-1/p}\Lambda_u^2$.
\end{lemma}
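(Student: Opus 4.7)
\medskip

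\noindent\emph{Plan.} Set $t:=\min_{n\in A}|\xx_n^*(f)|$, so $|a_n|\le t$ for every $n\in A$. My strategy is to reduce the inequality to controlling signed indicators on $A$, and then to invoke the truncation-quasi-greedy inequality applied to $f$ (or to a carefully chosen perturbation of $f$) on an enlargement of $A$ to a greedy set.

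\medskip

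\noindent\textbf{Stage 1: enlarge $A$ to a greedy set of $f$.} Since truncation quasi-greedy bases are nearly unconditional (as recorded just before the statement of the lemma via the citations to \cite{AABBL2022}*{Corollary 3.5} and \cite{DKK2003}*{Proposition 4.5}), the coefficient sequence $\Fou(f)$ belongs to $c_0$ for every $f\in\XX$. Hence $B_0:=\{k\in\NN : |\xx_k^*(f)|>t/2\}$ is finite, and $B:=A\cup B_0$ is a finite greedy set of $f$ with $\min_{n\in B}|\xx_n^*(f)|\ge t/2$. The truncation-quasi-greedy hypothesis gives
\[
t\,\bigl\Vert\Ind_{\varepsilon(f),B}[\XB,\XX]\bigr\Vert \;\lesssim\; \|f\|.
\]

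\medskip

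\noindent\textbf{Stage 2: reduce arbitrary coefficients to signed indicators.} Write $a_n=tb_n$ with $|b_n|\le 1$. Splitting into real and imaginary parts, then expanding $|b_n|=\sum_{k\ge 1}2^{-k}c_{n,k}$ with $c_{n,k}\in\{0,1\}$, and using the $p$-convexity of $\XX$ together with the UCC property of truncation-quasi-greedy bases (to pass from $\Ind_{\delta,A'}$ to $\Ind_{\delta,A}$ when $A'\subseteq A$), I expect a bound of the form
\[
\Bigl\Vert \sum_{n\in A} a_n\,\xx_n\Bigr\Vert \;\le\; C_{p,\XB}\; t \sup_{\delta\in\EE^A}\bigl\Vert \Ind_{\delta,A}[\XB,\XX]\bigr\Vert,
\]
where $C_{p,\XB}$ depends only on the $p$-convexity constant of $\XX$ and the UCC constant of $\XB$.

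\medskip

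\noindent\textbf{Stage 3: bound $t\,\Vert\Ind_{\delta,A}\Vert$ by $\Vert f\Vert$.} For each $\delta\in\EE^A$ I would consider the auxiliary vector
\[
g_\delta := \sum_{n\in A}\delta_n t\,\xx_n \;+\; \bigl(f - S_A[\XB,\XX](f)\bigr),
\]
which satisfies $|\xx_n^*(g_\delta)|=t$ on $A$ and $\xx_k^*(g_\delta)=\xx_k^*(f)$ for $k\notin A$. Hence $B$ is a greedy set of $g_\delta$ with $\min_{n\in B}|\xx_n^*(g_\delta)|=t$, and the truncation-quasi-greedy inequality together with UCC yields $t\,\Vert\Ind_{\delta,A}\Vert \lesssim \Vert g_\delta\Vert$.

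\medskip

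\noindent\textbf{Main obstacle.} The core difficulty is the remaining uniform bound $\Vert g_\delta\Vert \lesssim \Vert f\Vert$ in Stage 3, which is essentially a sign-change stability statement for $f$ on the set $A$. I expect to handle this by combining the Stage 1 bound $t\,\Vert\Ind_{\varepsilon(f),B}\Vert\lesssim\Vert f\Vert$ with $p$-convexity and an Abel-style rearrangement of $S_A(f)$ into a $p$-convex combination of signed indicators $\Ind_{\varepsilon(f),A_j}$ for nested $A_j\subseteq A$, each of which can be controlled via UCC by $\Vert\Ind_{\varepsilon(f),B}\Vert$. The other term $\sum_{n\in A}\delta_n t\,\xx_n$ in $g_\delta$ contributes $t\,\Vert\Ind_{\delta,A}\Vert$, which must be absorbed on the left-hand side; this is possible provided the $p$-convexity constant and the truncation-quasi-greedy constant combine to give an absorbable factor, and otherwise one iterates the argument coordinate by coordinate on $A$, using truncation-quasi-greediness to dominate each single-coordinate sign flip by $\Vert f\Vert$. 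This is the technically delicate portion of the proof and is precisely the content encapsulated in \cite{AABBL2021}*{Lemma~2.3} and its companions.
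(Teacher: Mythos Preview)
The paper does not supply a proof of this lemma; it is quoted from the references \cite{AABBL2021}, \cite{AABW2021}, \cite{AAW2021b} and used as a black box. So there is no in-paper argument to compare against, and I assess your outline on its own merits.

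Stages~1 and~2 are essentially sound (modulo the justification that $\Fou(f)\in c_0$, which does not follow from near unconditionality alone; one cleanly sidesteps this by proving the inequality first for finitely supported $f$ and then passing to the limit). The genuine gap is Stage~3. Your absorption scheme does not work because there is no reason for the product of the modulus of concavity and the truncation-quasi-greedy constant to be below~$1$; your coordinate-by-coordinate fallback yields a bound that grows with $|A|$, not a universal constant; and your final appeal to \cite{AABBL2021}*{Lemma~2.3} is circular, since that is exactly the statement you are proving.

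The idea you are missing is that the UCC property already lets you \emph{change signs}, not merely pass to subsets with the same signs. In the real case, given $\delta\in\EE^A$, split $A=A_0\cup A_1$ according to whether $\delta_n$ agrees or disagrees with $\varepsilon(f)_n$; then $\Ind_{\delta,A}=\Ind_{\varepsilon(f),A_0}-\Ind_{\varepsilon(f),A_1}$, and two applications of UCC (for $A_0\subseteq B$ and $A_1\subseteq B$, with the fixed sign pattern $\varepsilon(f)$) give $\|\Ind_{\delta,A}\|\lesssim\|\Ind_{\varepsilon(f),B}\|$. Combined with your Stage~1 bound $t\,\|\Ind_{\varepsilon(f),B}\|\lesssim\|f\|$, this yields $t\,\|\Ind_{\delta,A}\|\lesssim\|f\|$ directly, and the proof closes right after Stage~2 without any auxiliary vector $g_\delta$ or absorption. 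The complex case follows by handling real and imaginary parts separately.
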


\begin{proof}[Proof of Theorems~\ref{lemma: upperboundtqg} and \ref{prop: superrefphi}]
In the superreflexive case,  an application of \cite{ABW2021}*{Theorem 2.3} gives $C_0\in[1,\infty)$ and $p\in(1,\infty)$ such that 
 \begin{equation}\label{eq:AAGHR}
\norm{ \sum_{k=1}^m f_k}^p\le C_0^p  \sum_{k=1}^m\norm{f_k}^p
\end{equation}
for all $m\in \NN$ and all disjointly supported families  $(f_k)_{k=1}^m$ in $\langle \XB\rangle$ such that
\[
%\norm{f_k}_\infty
\max_{n\in \supp(f_{k})}\abs{\xx_n^*(f_{k})} \le \min_{n\in \supp(f_{k-1})}\abs{\xx_n^*(f_{k-1})}, \quad 2\le k\le m. 
\]
In turn, in the locally $p$-convex case for all values of  $0<p\le 1$, \eqref{eq:AAGHR} holds with $C_0=1$. Fix $f\in \Cu$, $0<a\le 1$ and $A\subseteq A(f,a)$. Pick $n\in\NN$ such that $2^{-n}< a\le 2^{1-n}$. Consider the partititon $(A_k)_{k=0}^n$ of $A$ given by $A_0=A\cap A(f,1)$, 
\[
A_k=A\cap \left(A(f,2^{-k})\setminus A(f,2^{1-k})\right),\quad  1\le k\le n-1,
\]
and $A_n=A\setminus A(f,2^{1-n})$. Let $C$ be the constant provided by Lemma~\ref{lem:truncation quasi-greedyQU}. In both cases we have
\[
\norm{S_{A_k}(f)}=2 \norm{S_{A_k}(f/2)}\le  2C \norm{f}, \quad 0\le k \le n.
\]
Hence, 
\begin{align*}
\norm{S_A(f)}^p=&\norm{ \sum_{k=0}^{n} S_{A_k}(f)}^p\\
&\le C_0^p\sum_{k=0}^{n}\norm{ S_{A_k}(f)}^p\\
&\le  (1+n) 2^p C^p C_0^p \norm{f}^p \\
&\le (2-\log_2 a) 2^p C^p C_0^p  \norm{f}^p.
\end{align*}
Taking the supremum over $A$ and $f$ we obtain
\[
\phi(a) \le  2 C C_0 (2-\log_2 a)^{1/p}, \quad 0<a\le 1.\qedhere
\]
\end{proof} 

%Since quasi-greedy bases are truncation quasi-greedy by \cite{DKK2003}*{Lemma 2.2}, Lemma~\ref{lemma: upperboundtqg} applies to them, which raises the question of whether - and when - this upper bound can be achieved. For reflexive spaces, Lemma~\ref{lemma: kmphi} and Lemma~\ref{lemma: upperboundtqg} combined with \cite{AADK2018}*{Theorem 4.17} give the following result. 
%\begin{corollary}\label{corollaryreflexiveqg}There is a reflexive Banach space $\XX$ with an almost greedy Schauder basis $\XB$ such that
%\begin{align}
%\phi(a)\approx \log\left(1+a^{-1}\right),\qquad 0<a\le 1. \label{cor: reflexive2}
%\end{align}
%\end{corollary}
%\begin{remark}\rm Further applications of \cite{AADK2018}*{Theorem 4.17} combined with the aforementioned lemmas allows one to find, in larga class of spaces, almost greedy bases for which \eqref{cor: reflexive2} holds. 
%\end{remark}
%For superreflexive spaces, Theorem~\cite{AADK2018}*{Theorem 4.4} combined with Lemma~\ref{lemma: kmphi} gives the lower bound. 
%\begin{corollary}\label{corollaryel2QG}For each $0<b<1$ and $1<p<\infty$, there is an almost greedy Schauder basis of $\ell_p$ with 
%$$
%\phi(a)\gtrsim \left(\log\left(1+a^{-1}\right)\right)^{b}, \qquad 0<a\le 1. 
%$$
%\end{corollary}
%Once again, this result turns out to be optimal. In fact, the upper bound is proven by  the same argument as \cite{AAGHR2015}*{Theorem 1.1}. We give a proof for the sake of completion. 

%\bibliography{Biblio}{}
%\bibliographystyle{plain}

% \bib, bibdiv, biblist are defined by the amsrefs package.
\begin{bibdiv}
\begin{biblist}

\bib{AlbiacAnsorena2016}{article}{
      author={Albiac, Fernando},
      author={Ansorena, Jos\'{e}~L.},
       title={Lorentz spaces and embeddings induced by almost greedy bases in
  {B}anach spaces},
        date={2016},
        ISSN={0176-4276},
     journal={Constr. Approx.},
      volume={43},
      number={2},
       pages={197\ndash 215},
         url={https://doi-org/10.1007/s00365-015-9293-3},
      review={\MR{3472645}},
}

\bib{AABBL2021}{article}{
      author={Albiac, Fernando},
      author={Ansorena, Jos\'{e}~L.},
      author={Berasategui, Miguel},
      author={Bern\'{a}, Pablo~M.},
      author={Lassalle, Silvia},
       title={Bidemocratic bases and their connections with other greedy-type
  bases},
        date={2021},
     journal={arXiv e-prints},
      eprint={2105.15177},
}

\bib{AABBL2022}{article}{
      author={Albiac, Fernando},
      author={Ansorena, Jos\'{e}~L.},
      author={Berasategui, Miguel},
      author={Bern\'{a}, Pablo~M.},
      author={Lassalle, Silvia},
       title={Weaker forms of unconditionality of bases in greedy
  approximation},
        date={2022},
     journal={Studia Mathematica (accepted)},
}

\bib{AAB2021}{article}{
      author={Albiac, Fernando},
      author={Ansorena, Jos\'{e}~L.},
      author={Bern\'{a}, Pablo~M.},
       title={New parameters and {L}ebesgue-type estimates in greedy
  approximation},
        date={2021},
     journal={arXiv e-prints},
      eprint={2104.10912},
}

\bib{AABW2021}{article}{
      author={Albiac, Fernando},
      author={Ansorena, Jos\'{e}~L.},
      author={Bern\'{a}, Pablo~M.},
      author={Wojtaszczyk, Przemys{\l}aw},
       title={Greedy approximation for biorthogonal systems in quasi-banach
  spaces},
        date={2021},
     journal={Dissertationes Math. (Rozprawy Mat.)},
      volume={560},
       pages={1\ndash 88},
}

\bib{AADK2018}{article}{
      author={Albiac, Fernando},
      author={Ansorena, Jos\'e~L.},
      author={Dilworth, Stephen~J.},
      author={Kutzarova, Denka},
       title={Building highly conditional almost greedy and quasi-greedy bases
  in {B}anach spaces},
        date={2019},
        ISSN={0022-1236},
     journal={J. Funct. Anal.},
      volume={276},
      number={6},
       pages={1893\ndash 1924},
         url={https://doi-org/10.1016/j.jfa.2018.08.015},
      review={\MR{3912795}},
}

\bib{AAGHR2015}{article}{
      author={Albiac, Fernando},
      author={Ansorena, Jos\'e~L.},
      author={Garrig\'{o}s, Gustavo},
      author={Hern\'{a}ndez, Eugenio},
      author={Raja, Mat\'{\i}as},
       title={Conditionality constants of quasi-greedy bases in super-reflexive
  {B}anach spaces},
        date={2015},
        ISSN={0039-3223},
     journal={Studia Math.},
      volume={227},
      number={2},
       pages={133\ndash 140},
         url={https://doi-org/10.4064/sm227-2-3},
      review={\MR{3397274}},
}

\bib{AAW2019}{article}{
      author={Albiac, Fernando},
      author={Ansorena, Jos\'{e}~L.},
      author={Wojtaszczyk, Przemys{\l}aw},
       title={Conditional quasi-greedy bases in non-superreflexive {B}anach
  spaces},
        date={2019},
        ISSN={0176-4276},
     journal={Constr. Approx.},
      volume={49},
      number={1},
       pages={103\ndash 122},
         url={https://doi-org/10.1007/s00365-017-9399-x},
      review={\MR{3895765}},
}

\bib{AAW2021b}{article}{
      author={Albiac, Fernando},
      author={Ansorena, Jos\'{e}~L.},
      author={Wojtaszczyk, Przemys{\l}aw},
       title={On certain subspaces of {$\ell_p$} for {$0<p\leq1$} and their
  applications to conditional quasi-greedy bases in {$p$}-{B}anach spaces},
        date={2021},
        ISSN={0025-5831},
     journal={Math. Ann.},
      volume={379},
      number={1-2},
       pages={465\ndash 502},
         url={https://doi-org/10.1007/s00208-020-02069-3},
      review={\MR{4211094}},
}

\bib{AAW2021}{article}{
      author={Albiac, Fernando},
      author={Ansorena, Jos\'{e}~L.},
      author={Wojtaszczyk, Przemys{\l}aw},
       title={Quasi-greedy bases in {$\ell_ p$} {$(0<p<1)$} are democratic},
        date={2021},
        ISSN={0022-1236},
     journal={J. Funct. Anal.},
      volume={280},
      number={7},
       pages={108871, 21},
         url={https://doi-org/10.1016/j.jfa.2020.108871},
      review={\MR{4211033}},
}

\bib{AlbiacKalton2016}{book}{
      author={Albiac, Fernando},
      author={Kalton, Nigel~J.},
       title={Topics in {B}anach space theory},
     edition={Second},
      series={Graduate Texts in Mathematics},
   publisher={Springer, [Cham]},
        date={2016},
      volume={233},
        ISBN={978-3-319-31555-3; 978-3-319-31557-7},
         url={https://doi.org/10.1007/978-3-319-31557-7},
        note={With a foreword by Gilles Godefory},
      review={\MR{3526021}},
}

\bib{Ansorena2018}{article}{
      author={Ansorena, Jos\'{e}~L.},
       title={A note on subsymmetric renormings of {B}anach spaces},
        date={2018},
        ISSN={1607-3606},
     journal={Quaest. Math.},
      volume={41},
      number={5},
       pages={615\ndash 628},
         url={https://doi-org/10.2989/16073606.2017.1393704},
      review={\MR{3836410}},
}

\bib{Ansorena2021}{article}{
      author={Ansorena, Jos{\'e}~L.},
       title={{F}ourier coefficients of functions in power-weighted
  ${L}_2$-spaces and conditionality constants of bases in {B}anach spaces},
        date={2022},
     journal={Proc. Roy. Soc. Edinburgh Sect. A},
       pages={1\ndash 27},
      eprint={https://doi.org/10.1017/prm.2022.16},
         url={https://doi.org/10.1017/prm.2022.16},
}

\bib{ABW2021}{article}{
      author={Ansorena, Jos\'{e}~L.},
      author={Bello, Glenier},
      author={Wojtaszczyk, Przemys{\l}aw},
       title={{L}orentz spaces and embeddings induced by almost greedy bases in
  superreflexive {B}anach spaces},
        date={2022},
     journal={Israel Journal of Mathematics (accepted)},
}

\bib{BBG2017}{article}{
      author={Bern\'{a}, Pablo~M.},
      author={Blasco, \'{O}scar},
      author={Garrig\'{o}s, Gustavo},
       title={{L}ebesgue inequalities for the greedy algorithm in general
  bases},
        date={2017},
        ISSN={1139-1138},
     journal={Rev. Mat. Complut.},
      volume={30},
      number={2},
       pages={369\ndash 392},
         url={https://doi.org/10.1007/s13163-017-0221-x},
      review={\MR{3642039}},
}

\bib{BBGHO2018}{article}{
      author={Bern\'{a}, Pablo~M.},
      author={Blasco, Oscar},
      author={Garrig\'{o}s, Gustavo},
      author={Hern\'{a}ndez, Eugenio},
      author={Oikhberg, Timur},
       title={Embeddings and {L}ebesgue-type inequalities for the greedy
  algorithm in {B}anach spaces},
        date={2018},
        ISSN={0176-4276},
     journal={Constr. Approx.},
      volume={48},
      number={3},
       pages={415\ndash 451},
         url={https://doi.org/10.1007/s00365-018-9415-9},
      review={\MR{3869447}},
}

\bib{DKK2003}{article}{
      author={Dilworth, Stephen~J.},
      author={Kalton, Nigel~J.},
      author={Kutzarova, Denka},
       title={On the existence of almost greedy bases in {B}anach spaces},
        date={2003},
        ISSN={0039-3223},
     journal={Studia Math.},
      volume={159},
      number={1},
       pages={67\ndash 101},
         url={https://doi.org/10.4064/sm159-1-4},
        note={Dedicated to Professor Aleksander Pe{\l}czy\'nski on the occasion
  of his 70th birthday},
      review={\MR{2030904}},
}

\bib{DKKT2003}{article}{
      author={Dilworth, Stephen~J.},
      author={Kalton, Nigel~J.},
      author={Kutzarova, Denka},
      author={Temlyakov, Vladimir~N.},
       title={The thresholding greedy algorithm, greedy bases, and duality},
        date={2003},
        ISSN={0176-4276},
     journal={Constr. Approx.},
      volume={19},
      number={4},
       pages={575\ndash 597},
         url={https://doi-org/10.1007/s00365-002-0525-y},
      review={\MR{1998906}},
}

\bib{DSBT2012}{article}{
      author={Dilworth, Stephen~J.},
      author={Soto-Bajo, Mois\'es},
      author={Temlyakov, Vladimir~N.},
       title={Quasi-greedy bases and {L}ebesgue-type inequalities},
        date={2012},
        ISSN={0039-3223},
     journal={Studia Math.},
      volume={211},
      number={1},
       pages={41\ndash 69},
         url={https://doi-org/10.4064/sm211-1-3},
      review={\MR{2990558}},
}

\bib{Elton1978}{book}{
      author={Elton, John~Hancock},
       title={Weakly null normalized sequences in banach spaces},
   publisher={ProQuest LLC, Ann Arbor, MI},
        date={1978},
  url={http://gateway.proquest.com/openurl?url_ver=Z39.88-2004&rft_val_fmt=info:ofi/fmt:kev:mtx:dissertation&res_dat=xri:pqdiss&rft_dat=xri:pqdiss:7915816},
        note={Thesis (Ph.D.)--Yale University},
      review={\MR{2628434}},
}

\bib{Enflo1973}{article}{
      author={Enflo, Per},
       title={A counterexample to the approximation problem in {B}anach
  spaces},
        date={1973},
        ISSN={1871-2509},
     journal={Acta Math},
      volume={130},
       pages={309\ndash 317 (1973)},
         url={https://doi.org/10.1007/BF02392270},
      review={\MR{??}},
}

\bib{HMVZ2008}{book}{
      author={H\'{a}jek, Petr},
      author={Montesinos, Vicente},
      author={Vanderwerff, Jon},
      author={Zizler, V\'{a}clav},
       title={Biorthogonal systems in banach spaces},
      series={CMS Books in Mathematics/Ouvrages de Math\'{e}matiques de la
  SMC},
   publisher={Springer, New York},
        date={2008},
        ISBN={978-0-387-68915-9},
         url={https://doi.org/10.1007/978-0-387-68915-9},
}

\bib{KoTe1999}{article}{
      author={Konyagin, Sergei~V.},
      author={Temlyakov, Vladimir~N.},
       title={A remark on greedy approximation in {B}anach spaces},
        date={1999},
        ISSN={1310-6236},
     journal={East J. Approx.},
      volume={5},
      number={3},
       pages={365\ndash 379},
      review={\MR{1716087}},
}

\bib{LinTza1977}{book}{
      author={Lindenstrauss, Joram},
      author={Tzafriri, Lior},
       title={Classical {B}anach spaces. {I}},
   publisher={Springer-Verlag, Berlin-New York},
        date={1977},
        ISBN={3-540-08072-4},
        note={Sequence spaces, Ergebnisse der Mathematik und ihrer
  Grenzgebiete, Vol. 92},
      review={\MR{0500056}},
}

\bib{LinTza1979}{book}{
      author={Lindenstrauss, Joram},
      author={Tzafriri, Lior},
       title={Classical {B}anach spaces. {II}},
      series={Ergebnisse der Mathematik und ihrer Grenzgebiete [Results in
  Mathematics and Related Areas]},
   publisher={Springer-Verlag, Berlin-New York},
        date={1979},
      volume={97},
        ISBN={3-540-08888-1},
        note={Function spaces},
      review={\MR{540367}},
}

\bib{Singer1970}{book}{
      author={Singer, I.},
       title={Bases in {B}anach spaces. {I}},
   publisher={Springer-Verlag, New York-Berlin},
        date={1970},
        note={Die Grundlehren der mathematischen Wissenschaften, Band 154},
      review={\MR{0298399}},
}

\bib{Woj2000}{article}{
      author={Wojtaszczyk, Przemys{\l}aw},
       title={Greedy algorithm for general biorthogonal systems},
        date={2000},
        ISSN={0021-9045},
     journal={J. Approx. Theory},
      volume={107},
      number={2},
       pages={293\ndash 314},
         url={https://doi-org/10.1006/jath.2000.3512},
      review={\MR{1806955}},
}

\bib{Woj2014}{article}{
      author={Wojtaszczyk, Przemys{\l}aw},
       title={On left democracy function},
        date={2014},
        ISSN={0208-6573},
     journal={Funct. Approx. Comment. Math.},
      volume={50},
      number={2},
       pages={207\ndash 214},
         url={https://doi-org/10.7169/facm/2014.50.2.1},
      review={\MR{3229057}},
}

\end{biblist}
\end{bibdiv}

\end{document}